\definecolor{myred}{rgb}{0.75,0,0}
\definecolor{mygreen}{rgb}{0,0.5,0}
\definecolor{myblue}{rgb}{0,0,0.65}
\theoremstyle{plain}
\newtheorem{theorem}[subsubsection]{Theorem}
\newtheorem{proposition}[theorem]{Proposition}
\newtheorem{lemma}[theorem]{Lemma}
\theoremstyle{definition}
\newtheorem{definition}[theorem]{Definition}
\newtheorem{remark}[theorem]{Remark}
\newtheorem{example}[theorem]{Example}
\newtheorem{notation}[theorem]{Notation}
\theoremstyle{remark}
\numberwithin{equation}{section}
\newcommand\nc{\newcommand}
\nc\on{\operatorname}
\nc\renc{\renewcommand}
\newcommand*{\shom}{\mathscr{H}\kern -.5pt om}
\newcommand*{\stor}{\mathscr{T}\kern -.5pt or}
\newcommand*{\sext}{\mathscr{E}\kern -.5pt xt}
\providecommand\@dotsep{5}
\renewcommand{\listoftodos}[1][\@todonotes@todolistname]{%
\@starttoc{tdo}{#1}}
\def\Ddots{\mathinner{\mkern1mu\raise\p@
		\vbox{\kern7\p@\hbox{.}}\mkern2mu
		\raise4\p@\hbox{.}\mkern2mu\raise7\p@\hbox{.}\mkern1mu}}
\newcommand{\customlabel}[2]{\protected@write \@auxout {}{\string \newlabel {#1}{{#2}{\thepage}{#2}{#1}{}} }\hypertarget{#1}{#2}}
\DeclareMathOperator\id{id}
\DeclareMathOperator\sgn{sgn}
\DeclareMathOperator\Hur{Hur}
\DeclareMathOperator\Conf{Conf}
\DeclareMathOperator\conf{Conf}
\DeclareFontFamily{U}{wncy}{}
\DeclareFontShape{U}{wncy}{m}{n}{<->wncyr10}{}
\DeclareSymbolFont{mcy}{U}{wncy}{m}{n}
\DeclareMathSymbol{\Sha}{\mathord}{mcy}{"58}
\newcommand{\phur}[4]{\operatorname{Hur}^{#1, #3}_{#2,#4}}
\newcommand{\boundarycphurc}[4]{\operatorname{CHur}^{#1, #3, #4}_{#2}}
\newcommand{\phurc}[3]{\operatorname{Hur}^{#1, #3}_{#2}}
\newcommand{\cphurc}[3]{\operatorname{CHur}^{#1, #3}_{#2}}
\newcommand{\fncell}[4]{\operatorname{W}^{#1, #3, #4}_{#2}}
\newcommand{\cfncell}[4]{\operatorname{CW}^{#1, #3, #4}_{#2}}
\newcommand{\fncocycle}[4]{\operatorname{Z}^{#1, #3, #4}_{#2}}
\newcommand{\fncoboundary}[4]{\operatorname{B}^{#1, #3, #4}_{#2}}
\newcommand{\fncohomology}[4]{\operatorname{H}^{#1, #3, #4}_{#2}}
\newcommand\stable[4]{H_{#1, #3}^{#2, #4}}
\newcommand\stableboundary[4]{H_{#1}^{#2,#3,#4}} 
\def\listtodoname{List of Todos}
\def\listoftodos{\@starttoc{tdo}\listtodoname}
\title{An alternate computation of the stable homology of dihedral group
Hurwitz spaces}
\author{Aaron Landesman}
\author{Ishan Levy}
\begin{document}

\begin{abstract}
	We give an different proof of our result computing the stable
	homology of dihedral group Hurwitz spaces. This proof employs more elementary
	methods, instead of higher algebra.
\end{abstract}
\date{October 29, 2024}
\maketitle
\tableofcontents

\section{Introduction}

Let $G$ be a group, $c \subset G$ be a conjugacy class, and $n \in \mathbb
Z_{\geq 0}$. Let $\phurc G n c$ denote the Hurwitz space over the complex
numbers which 
parameterizes branched $G$-covers of the disk, with a marked basepoint over the
boundary, branched at $n$ points, where the inertia type of each branch point
lies in the conjugacy class $c \subset G$. 
In algebraic topology,
these Hurwitz spaces can be described as the homotopy quotient $c^n/B_n$, where $B_n$ is the braid
group on $n$ strands.

Let $\conf_n$ denote the configuration space of $n$
unordered, distinct points in the interior of the disc, and unbranched over the
boundary.
There is a natural map $\phurc G n c \to \conf_n$ which
sends a branched cover of the disc $X \to D$ to its branch locus in the interior
of the disc.

\begin{theorem}
	\label{theorem:stable-homology-intro}
Choose an odd prime $\ell$ and use $G$ to denote the dihedral group of order
$2\ell$, $G : = \mathbb Z/\ell \mathbb Z \rtimes \mathbb
Z/2 \mathbb Z$.
Let $c \subset G$ denote the conjugacy class of order $2$ elements.
There are constants $I$ and $J$ depending only on $G$ so that for 
	$n > iI + J$ and any connected component $Z \subset \phurc G n c$, the map $H_i(Z, \mathbb Q) \to H_i(\conf_n, \mathbb Q)$
	is an isomorphism.
\end{theorem}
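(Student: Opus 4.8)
The plan is to trade the geometry of Hurwitz spaces for the group homology of the braid group. Since a branched $G$-cover of the disc is determined by its monodromy once the branch locus is fixed, the branch-locus map $\phurc G n c \to \conf_n$ is a finite covering space, classified by the $B_n$-set $c^n$ with the Hurwitz action $\sigma_i \cdot (g_1,\dots,g_n) = (\dots, g_i g_{i+1} g_i^{-1}, g_i, \dots)$. Each connected component $Z$ is then a $K(\pi,1)$ with $\pi = \stab_{B_n}(x)$ for a point $x$ in the associated orbit, $\conf_n = K(B_n,1)$, and the map $Z \to \conf_n$ is induced by the inclusion $\stab_{B_n}(x) \hookrightarrow B_n$. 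Thus \Cref{theorem:stable-homology-intro} is equivalent to asserting that, in a range $i < (n-J)/I$, the inclusion of every such stabilizer is a rational homology isomorphism; equivalently, that the local system $\mathbb{Q}[c^n]$ contributes the homology of $\conf_n$ to each component.

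First I would linearize. Writing $G = \langle r,s \mid r^\ell = s^2 = 1,\ srs = r^{-1}\rangle$, the class $c$ consists of the $\ell$ reflections $r^a s$, which I identify with $a \in \FF_\ell$. A direct computation gives $(r^a s)(r^b s)(r^a s)^{-1} = r^{2a-b}s$, so the Hurwitz action on $c^n \cong \FF_\ell^n$ is $\FF_\ell$-\emph{linear}, with $\sigma_i$ acting on the $(i,i+1)$ coordinates by $\left(\begin{smallmatrix} 2 & -1 \\ 1 & 0\end{smallmatrix}\right)$. This is the $t=-1$ specialization of the Burau representation, i.e. the monodromy on $H_1(Y;\FF_\ell)$ of the double cover $Y \to D$ obtained by pushing a $G$-cover to $\ZZ/2$. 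Since $\ell$ is odd this action lands in $\gl_n(\FF_\ell)$, so over $\mathbb{C}$ the permutation module decomposes into characters, $\mathbb{C}[c^n] = \bigoplus_{\psi \in \widehat{\FF_\ell^n}} \mathbb{C} e_\psi$, and $B_n$ permutes the lines $\mathbb{C} e_\psi$ through the contragredient action, fixing $e_\psi$ whenever it fixes $\psi$. Grouping into $B_n$-orbits and applying Shapiro's lemma identifies $H_*(\phurc G n c; \mathbb{C})$ with $\bigoplus_{[\psi]} H_*(\stab_{B_n}(\psi); \mathbb{C}) = \bigoplus_{[\psi]} H_*(\conf_n; \mathcal{L}_{[\psi]})$, a sum of twisted homologies of configuration space indexed by orbits of characters.

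With $H_*(B_n;\mathbb{Q})$ known to be $\mathbb{Q}$ in degrees $0,1$ and to vanish above, the theorem would follow from the single statement that for every character orbit $[\psi]$ one has $H_i(\conf_n;\mathcal{L}_{[\psi]}) \cong H_i(\conf_n;\mathbb{Q}) \otimes \mathbb{C}$ in the stable range, the trivial orbit supplying the untwisted homology and each nontrivial orbit behaving identically. To pass from this total computation to the per-component statement I would invoke the transfer: for the connected finite cover $Z \to \conf_n$ the projection induces a surjection $H_i(Z;\mathbb{Q}) \twoheadrightarrow H_i(\conf_n;\mathbb{Q})$, so once the summed dimensions equal $(\#\text{components}) \cdot \dim H_i(\conf_n;\mathbb{Q})$, each component is forced to map isomorphically. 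The twisted homologies I would compute on an explicit chain model for $\conf_n$ — the Salvetti/Fox-calculus complex, or better the long exact sequence of the Hurwitz stabilization map gluing on one further branch point — running an induction on $n$ in which the relevant differentials are controlled by the fact that the eigenvalues of the Burau-at-$(-1)$ action are $\ell$-th roots of unity and $\ell$ is prime.

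The main obstacle is precisely this last step: proving that $H_i(\conf_n; \mathcal{L}_{[\psi]})$ vanishes for $i \geq 2$ (and is one-dimensional in degree $1$) uniformly across all nontrivial character orbits and throughout a range growing linearly in $n$. In the higher-algebra proof this uniformity was packaged into a structural vanishing for an $\mathbb{E}_1$-module (a Koszulness or finite-generation input); the difficulty in the elementary approach is to reproduce it by hand, simultaneously controlling the combinatorics of the $B_n$-orbits on $\widehat{\FF_\ell^n}$ — whose stabilizers vary with the orbit — and establishing the homological vanishing for each, presumably by exhibiting a filtration or auxiliary acyclic complex whose subquotients isolate individual characters and whose higher differentials are injective in the stable range. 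Securing this vanishing with an explicit linear-in-$i$ bound on $n$, thereby pinning down the constants $I$ and $J$, is where essentially all of the work lies.
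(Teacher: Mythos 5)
Your reduction is set up correctly: the identification of $\phurc G n c \to \conf_n$ as the cover classified by the Hurwitz action on $c^n$, the linearization $c^n \cong \FF_\ell^n$ with $\sigma_i$ acting by $\left(\begin{smallmatrix} 2 & -1 \\ 1 & 0\end{smallmatrix}\right)$ (Burau at $t=-1$), the character decomposition $\mathbb{C}[c^n] = \bigoplus_\psi \mathbb{C}e_\psi$ with Shapiro's lemma, and the transfer-plus-dimension-count that converts a total computation into the per-component statement (though note you silently need that the number of $B_n$-orbits on $\widehat{\FF_\ell^n}$ equals the number of orbits on $c^n$, which requires a Brauer-permutation-lemma style argument comparing fixed points of the two actions). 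But the proposal stops exactly where the theorem begins: you defer, and do not prove, the statement that $H_i(\conf_n;\mathcal{L}_{[\psi]})$ agrees with $H_i(\conf_n;\mathbb{Q})\otimes\mathbb{C}$ for every nontrivial character orbit, uniformly, in a range linear in $n$. That statement is not a technical lemma one can wave at with "a filtration or auxiliary acyclic complex whose subquotients isolate individual characters" --- it \emph{is} the theorem, merely rewritten in the language of twisted homology of braid groups, and no citable result in the literature supplies it. Your own closing sentence concedes that this is "where essentially all of the work lies," so what you have is a correct reformulation together with a counting scheme for deducing the component-wise statement, not a proof.

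For comparison, the paper's proof of \autoref{theorem:stable-homology-intro} splits into two inputs: the linear stability range is not reproven but quoted from \cite[Theorem 6.1]{EllenbergVW:cohenLenstra}, and the stable value is computed in \autoref{theorem:stable-homology}. The computation of the stable value is precisely the machinery your proposal lacks: the decomposition of stable classes by boundary monodromy, the group completion argument (\autoref{lemma:cyclic-action}) killing components with boundary monodromy a generator of $\mathbb{Z}/\ell\mathbb{Z}$, the reduction to classes stabilized by a single $g \in c$ (\autoref{proposition:stabilize-by-one-element-trivial}), and --- at the technical core --- the exactness of the two-sided $\mathcal K$-complex established by the explicit chain homotopy of \autoref{lemma:nullhomotopy} and exploited through Fox--Neuwirth/Fuks cells. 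That chain-level exactness is, in effect, the uniform vanishing of twisted homology that you postpone; your route would additionally need to produce the explicit constants $I$ and $J$, since you never invoke the known homological stability theorem that the paper leans on for the range. To complete your proposal you would either have to import \cite[Theorem 6.1]{EllenbergVW:cohenLenstra} for the range and then supply an argument of comparable strength to \autoref{proposition:upper-triangle-exact} for the stable value, or prove both at once for each character orbit, which is strictly harder than what the paper does.
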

This is an immediate consequence of
\cite[Theorem 6.1]{EllenbergVW:cohenLenstra}, which proves the homology
stabilizes in a linear range, and
\autoref{theorem:stable-homology}, which computes the stable value of these
homology groups.

Our primary motivation for proving \autoref{theorem:stable-homology-intro}
is for its application to the computation of the moments predicted by the
number theoretic 
Cohen--Lenstra heuristics over function fields.
See \cite{landesmanL:the-stable-homology-of-non-splitting}
for more on this application.

\begin{remark}
	\label{remark:}
	We originally came up with a more elementary argument for
	\autoref{theorem:stable-homology-intro} along the lines in this paper, which
	guided us to later devise
	the argument presented in
	\cite{landesmanL:the-stable-homology-of-non-splitting}.
	In some sense, these two arguments follow very similar trajectories,
	although this may not be evident on first blush.
	Our main result \autoref{theorem:stable-homology-intro} is a special case of the results of
\cite{landesmanL:the-stable-homology-of-non-splitting}, where we study the stable
homology of $G = \mathbb Z/\ell \mathbb Z \rtimes \mathbb Z/2 \mathbb Z$,
instead of the more general case that $G$ is a finite group and $c \subset G$ is
a conjugacy class so that $(G,c)$ is {\em non-splitting}, i.e., $(G,c)$ has the property that $c \cap G'$ does not consist of more
than one conjugacy class for any subgroup $G' \subset G$.

	We chose to write that version there, because it seemed conceptually
	simpler and also generalized more easily to the case when
	$(G,c)$ is non-splitting.
	However, we still thought it would be nice to record an argument along the lines of our original
	one, especially for those interested in the subject and 
	not familiar with higher algebra. 

	We also wanted to write this note to emphasize that our computation of
	stable homology is not an abstruse result in topology.
	If readers unfamiliar with
	higher algebra only read
	\cite{landesmanL:the-stable-homology-of-non-splitting}, it is possible they
	could view it that way. We wrote this exposition to emphasize
	that our computation of the stable homology of Hurwitz spaces is an accessible result in linear
	algebra.
\end{remark}
\begin{remark}
	\label{remark:}
	With significant additional work, we were able to generalize many parts
	of the argument presented here to the case that
$(G,c)$ is non-splitting. However, we did not carefully work
	out all the details. In particular, we did not work out the generalization of
	\autoref{section:stabilized-by-one-element}.
	We believe it would be interesting to do so.
\end{remark}

\subsection{Proof outline}
\label{subsection:proof-outline}

To prove \autoref{theorem:stable-homology-intro},
we first study what the stabilization map looks like more concretely.
By combining various ideas in the literature, namely the group
completion theorem and the interaction of the stabilization map with boundary
monodromy, we are able to show that the homology in the kernel of the map to
configuration space decomposes as a direct sum of
subspaces, each of which is stabilized via multiplication by a single element of
the group. 
This reduction is carried out in \autoref{section:stable-homology}.
The crux of the matter is therefore to prove
\autoref{proposition:stabilize-by-one-element-trivial}, showing that any element
of the stable cohomology
of $\phurc G n c$ not pulled back from the cohomology of configuration
space, stabilized via
multiplication by a single $g \in c$,
is trivial. 
To approach this, we start by studying the stable cohomology in terms of
Fox-Neuwirth/Fuks cells.
Since we know the stabilization map is obtained via a single group element, we
can write down an explicit form for such a cohomology class stabilized by a
single group element.
If we are able to show this class is cohomologous to one in a sufficiently simple form, we will be
able to conclude using exactness of the stable $\mathcal K$-complex, a complex introduced
in
\cite[Theorem 4.2]{EllenbergVW:cohenLenstra}, which was the key to showing that the
homologies of these Hurwitz spaces stabilize.
In order to massage the element into the desired form,
we extend the study of the $\mathcal K$-complex and introduce a new object of study, which we call the ``two-sided
$\mathcal K$-complex,'' generalizing the $\mathcal K$-complex.
We compute the cohomology of a variant of a particular two-sided $\mathcal K$-complex 
in \autoref{proposition:upper-triangle-exact},
via an
explicit chain homotopy. This allows us to massage $x$ into the desired form,
and hence conclude the proof.

\subsection{Outline of paper}
We introduce notation for Hurwitz spaces in
\autoref{section:hurwitz-space-notation}.
In \autoref{section:2-sided}, we compute the homology of a particular complex,
which we call a two-sided $\mathcal K$-complex.
In \autoref{section:stabilized-by-one-element}, we use this computation of the
homology of the two-sided $\mathcal K$-complex to compute the part of the stable homology of 
dihedral Hurwitz spaces stabilized by a single element $g \in c$.
Finally, in \autoref{section:stable-homology}, we use this computation to prove
\autoref{theorem:stable-homology-intro}, computing the stable homology of
dihedral group Hurwitz spaces.

\subsection{Acknowledgements}

We are grateful to Jordan Ellenberg for meticulously reading and
thinking through some of the most technical parts of the argument, which led to
a substantial improvement in the quality of the writing of this paper.
We thank Melanie Wood for listening to a detailed explanation of our argument.
We thank Jordan Ellenberg and Craig Westerland for sharing with us a number of
the ideas they tried while working on this problem.
We thank Nathalie Wahl and Melanie Wood for helpful suggestions which led to a significant
simplification of the proof of \autoref{theorem:two-sided-g}.
We also thank Andrea Bianchi for pointing out another chain homotopy which can
be used, see \autoref{remark:alternate-homotopy}.
We also thank Ahn
Hoang for helpful comments.
Landesman 
was supported by the National Science
Foundation 
under Award No.
DMS 2102955.
Levy was supported by the NSF Graduate Research
Fellowship under Grant No. 1745302, and by the Clay Research Fellowship.

\section{Notation for Hurwitz spaces}
\label{section:hurwitz-space-notation}

\begin{notation}
	\label{notation:group}
	We let $G$ denote the dihedral group of order $2\ell$ for an odd prime $\ell$ and let $c \subset
	G$ denote the conjugacy class of order $2$ elements.
	As mentioned in the introduction, we use
	$\phurc G n c,$ to denote the Hurwitz space over the complex
numbers which 
branched $G$-covers of the disk, with a marked basepoint over the boundary,
branched at $n$ points, where the inertia type of each branch point lies in the conjugacy class $c \subset G$. 
This has a model as the homotopy quotient $c^n/B_n$, where $B_n$ is generated by
the standard elements $\sigma_1, \ldots, \sigma_{n-1}$ where $\sigma_i$ twists strands $i$ and
$i+1$,
and the action is given by $\sigma_i(g_1, \ldots, g_n) = (g_1, \ldots, g_{i-1}, g_{i+1}, g_{i+1}^{-1} g_i g_{i+1},
g_{i+2},\ldots, g_n)$.

We also define $\cphurc G n c$ to be the union of connected components of
$\phurc G n c$ which parameterize connected $G$ covers of the disk.
If $S \subset c^n$ is the subset of tuples of $n$ elements in $c$ which generate
$G$, then $\phurc G n c$ has a model as the homotopy quotient $S/B_n$.
\end{notation}

\begin{notation}
	\label{notation:component}
	Continuing with notation as in \autoref{notation:group},
	we can identify the connected components of the pointed Hurwitz space
	$\phur G n c {\mathbb C}$ with orbits of the $B_n$ action on $c^n$.
	Under this identification,
	$x_1, \ldots, x_n \in c$ we use $[x_1] \cdots [x_n]$ to denote the
	connected component of $\phur G n c {\mathbb C}$ corresponding to the
	$B_n$ orbit of the tuple $(x_1, \ldots, x_n)$.
\end{notation}

\begin{notation}
	\label{notation:stable-homology}
	Fix $G,c$ as in \autoref{notation:group}.
	For $\beta \in \{0,1\}$,
	we let $\stable i G \beta c$ denote the vector space $H_i(\cphurc G n c,
	\mathbb Q)$ for
	$n \equiv \beta \pmod 2$ sufficiently large. We note this vector space is independent of $n$ once $n$ is
	sufficiently large by 
	\cite[Theorem 6.1]{EllenbergVW:cohenLenstra} (with input from
	\cite[Proposition
A.3.1]{ellenbergL:homological-stability-for-generalized-hurwitz-spaces}).
	Here, the isomorphism between 
	$H_i(\cphurc G n c, \mathbb Q)$ and $H_{i+2}(\cphurc G n c,
	\mathbb Q)$ is given by 
	the stabilization operator $\sum_{g \in c} [g]^2$, where
	$[g]$ corresponds to right multiplication by a generator of $H_0(\phurc
	G 1 c, \mathbb Q)$
	with monodromy $g$.
	At some points throughout the text, we will consider the action of $[g]$
	on cohomology, in which case it corresponds to the map dual to the
	map $[g]$ on homology, see \autoref{remark:stabilization-description}
	for an explicit description of this map.
\end{notation}

In particular, we use $\stable i {\id} \beta {\id}$ to denote the stable
rational homology of configuration space on $n$ points, which is well known to be
$1$-dimensional if $i = 0$ or $1$ and $0$ dimensional otherwise; 
one may deduce this from the computation of integral homology of
ordered configuration space in \cite{arnold:cohomology-of-braid-group} by rationalizing and taking $S_n$ invariants.

Throughout this paper, it will be crucial to understand the connected components
of our Hurwitz spaces. For our dihedral groups of order $2 \ell$, the 
stable components turn out to be uniquely determined by their boundary
monodromy, as we explain next.

\begin{lemma}
	\label{lemma:components}
	Let $H$ be an odd order abelian group, and $G := H \rtimes \mathbb Z/2
	\mathbb Z$, with the generator of $\mathbb Z/2 \mathbb Z$ acting by
	inversion. We let $c \subset G$ denote the conjugacy class of order $2$
	elements.
There is a map $c^n \to G$ given by sending $(g_1, \ldots, g_n) \mapsto g_1
\cdots g_n$.
This map induces a map $\pi_0(\phurc G n c) \to G$. 
We let $\boundarycphurc G n c g$ denote those connected components of $\cphurc G
n c$ mapping to $g$ under the above map.
For $n$ sufficiently large, there are
$\# \wedge^2 H$ connected components of $\boundarycphurc G n c g$ when $n \bmod
2$ agrees with the image of $g$ in $\mathbb Z/2 \mathbb Z$ under the projection
$G \to G/H \simeq \mathbb Z/2 \mathbb Z$ and $0$ components otherwise.
In particular, if $H \simeq \mathbb Z/\ell \mathbb Z$, for $\ell$ an odd prime,
there is at most one such component.
\end{lemma}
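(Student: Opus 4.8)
The plan is to understand $\pi_0(\phurc G n c)$ combinatorially via $B_n$-orbits on $c^n$, and then compute the image and fibers of the boundary-monodromy map. First I would set up coordinates: writing $G = H \rtimes \mathbb{Z}/2\mathbb{Z}$ with the nontrivial $\mathbb{Z}/2\mathbb{Z}$-element acting by inversion on $H$, every order-$2$ element of $G$ has the form $(h, 1)$ for $h \in H$ (these are exactly the elements outside $H$, since $(h,1)^2 = (h \cdot h^{-1}, 0) = (0,0)$), so $c = \{(h,1) : h \in H\}$ and we may coordinatize a tuple $(g_1, \ldots, g_n) \in c^n$ by the sequence $(h_1, \ldots, h_n) \in H^n$. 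The boundary monodromy $g_1 \cdots g_n$ then has image $n \bmod 2$ in $\mathbb{Z}/2\mathbb{Z}$, which immediately forces the parity constraint: the map to $g$ is empty unless $n \bmod 2$ matches the projection of $g$, giving the ``$0$ components otherwise'' claim.

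Next I would compute the $B_n$-action in these $H$-coordinates. Using the formula $\sigma_i(g_1, \ldots, g_n) = (\ldots, g_{i+1}, g_{i+1}^{-1} g_i g_{i+1}, \ldots)$, a direct calculation with $g_i = (h_i, 1)$, $g_{i+1} = (h_{i+1}, 1)$ shows $\sigma_i$ sends the pair $(h_i, h_{i+1})$ to $(h_{i+1},\, 2h_{i+1} - h_i)$ (writing $H$ additively). The key point is that this action preserves the total sum $\sum h_j$ and, crucially, acts on the differences in a way that I would package into an invariant valued in $\wedge^2 H$. The natural candidate is a ``discriminant''-type class built from the unordered data: I would show that the quantity $\sum_{j < k} h_j \wedge h_k \in \wedge^2 H$ (or an equivalent antisymmetric pairing of the $h_j$'s) is preserved by each $\sigma_i$, and that together with the total sum it is a complete invariant of the $B_n$-orbit once $n$ is large enough for the cover to be connected and the space of tuples to be ``saturated.''

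The heart of the argument is therefore twofold: (i) verify $\wedge^2 H$-invariance of the proposed class under the generators $\sigma_i$, which is a short computation once the transformation $(h_i, h_{i+1}) \mapsto (h_{i+1}, 2h_{i+1} - h_i)$ is in hand; and (ii) prove that these invariants \emph{separate} all orbits in the stable range, i.e., any two connected tuples (generating $G$) with the same total product and the same $\wedge^2 H$-invariant lie in a common $B_n$-orbit for $n \gg 0$. Step (ii) is the main obstacle: it is essentially a connectivity/transitivity statement for the braid action, and I expect to prove it by an explicit normal-form argument, repeatedly applying $\sigma_i$ to move tuples toward a canonical representative while tracking the conserved invariants, using that $H$ is abelian of odd order (so $2$ is invertible, making the transformation $h_i \mapsto 2h_{i+1} - h_i$ reversible and letting me freely adjust the $h_j$). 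Fixing the boundary monodromy $g$ pins down the total product, so the fiber $\boundarycphurc G n c g$ is a torsor-like set indexed precisely by the values of the $\wedge^2 H$-invariant, giving $\#\wedge^2 H$ components. Finally, specializing to $H \simeq \mathbb{Z}/\ell\mathbb{Z}$ gives $\wedge^2 H = 0$, hence at most one component, as claimed.
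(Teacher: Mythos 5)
Your coordinatization of $c$ and your formula for the braid action are correct: with $g_i=(h_i,1)$ one indeed has $\sigma_i\colon (h_i,h_{i+1})\mapsto (h_{i+1},\,2h_{i+1}-h_i)$, and the parity obstruction follows. But both conserved quantities you propose fail. The total sum is not preserved: $h_i+h_{i+1}\mapsto 3h_{i+1}-h_i$. (What is preserved is the alternating sum $\sum_j(-1)^jh_j$, which is nothing other than the $H$-component of the boundary monodromy, so it carries no information beyond $g$.) Likewise $\sum_{j<k}h_j\wedge h_k$ is not braid-invariant: applying $\sigma_i$ changes it by $2\sum_{j<i}h_j\wedge(h_{i+1}-h_i)+2\sum_{k>i+1}(h_{i+1}-h_i)\wedge h_k$, which is nonzero in general (e.g.\ for $H=(\mathbb Z/3\mathbb Z)^2$ and the tuple $(a,b,0)$, applying $\sigma_2$ sends $a\wedge b$ to $-a\wedge b$). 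The invariant that does work is the signed version $\sum_{j<k}(-1)^{j+k}h_j\wedge h_k$; with these signs inserted, invariance under each $\sigma_i$ is exactly the short computation you anticipated. This error is repairable, so it is not the main problem.

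The main problem is your step (ii), which you explicitly defer: showing that, for $n$ large, the boundary monodromy together with the $\wedge^2 H$-valued invariant is a \emph{complete} invariant of the $B_n$-orbits on generating tuples, and moreover that every value in $\wedge^2 H$ is actually realized (you need surjectivity to get exactly $\#\wedge^2 H$ components, not just at most that many). This stable transitivity statement is the entire content of the lemma; it is a Conway--Parker-type theorem, and a normal-form argument establishing it from scratch would be a substantial piece of work that your proposal only promises, not provides. The paper sidesteps precisely this point by invoking Wood's algebraic lifting invariant \cite[Theorem 3.1, Theorem 2.5]{wood:an-algebraic-lifting-invariant}, which asserts that for $n\gg 0$ the components of $\boundarycphurc G n c g$ (with the correct parity) are counted by a group $H_2(G,c)$; the lemma then reduces to pure group homology: $H_2(G,c)\cong H_2(G,\mathbb Z)$ because the centralizer of $x\in c$ is $\{\id,x\}$, so the relations defining $H_2(G,c)$ factor through $H_2(\mathbb Z/2\mathbb Z,\mathbb Z)=0$, and $H_2(G,\mathbb Z)\cong H_2(H,\mathbb Z)=\wedge^2 H$ via the Lyndon--Hochschild--Serre spectral sequence for $H\to G\to\mathbb Z/2\mathbb Z$. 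Unless you either cite such a lifting-invariant theorem (at which point your explicit invariant becomes unnecessary) or genuinely carry out the transitivity argument, your proposal does not prove the lemma.
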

\begin{remark}
	\label{remark:}
	The only case of \autoref{lemma:components} we will use is the case $H = \mathbb Z/\ell \mathbb Z$. 
	It is not too difficult to
	show by hand that when $H = \mathbb Z/\ell \mathbb Z$, there is a unique component of
	$\boundarycphurc G n c g$ when $n \bmod
2$ agrees with the image of $g$ in $\mathbb Z/2 \mathbb Z$ and $0$ components otherwise.
	However, we have opted to prove the statement in the above generality,
	as it appears not to have been completely spelled out in the literature.
\end{remark}

\begin{proof}
	The final statement follows from the first statement because when $H =
	\mathbb Z/\ell \mathbb Z$, $\wedge^2 H$ is the trivial group.

	To prove the first statement, there is a certain finite abelian group $H_2(G,c)$
	defined in \cite[Definition, p.
	3]{wood:an-algebraic-lifting-invariant}, and which we will recall the
	definition of in the
	next paragraph, with a map $S_c \to G$.
	It follows from \cite[Theorem 3.1, Theorem 2.5, and the Definition on p.
	3]{wood:an-algebraic-lifting-invariant} that $H_2(G,c)$
	satisfies the following property:
	the number of irreducible components of $\#\boundarycphurc G n c
	g$ is identified with $\# \ker(S_c \to G)$, when the image of $g$ in the
	abelianization of $G$ agrees with the image of $n$ in the abelianization
	$\mathbb Z/2 \mathbb Z$, and there are no such components otherwise.

	In our present situation, we claim the finite group $H_2(G,c)$ is identified with
	the usual group homology $H_2(H,\mathbb Z)$. This was outlined in
	\cite[9.3.2]{ellenbergVWhomologicalII} and is also closely related to
	the proof of
	\cite[Theorem
	3.1]{sawinW:conjectures-for-distributions-containing-roots-of-unity}. We now recapitulate the
	argument.
	Let $\langle x,y \rangle$ denote the image of the canonical generator
	$[(1,0)|(0,1)] - [(0,1)|(1,0)] \in H_2(\mathbb Z^2, \mathbb Z)$ under
	the map $H_2(\mathbb Z^2, \mathbb Z) \to H_2(G, \mathbb Z)$, induced by
	the map
	$\mathbb Z^2 \to G$ sending the first generator to $x$ and
	the second generator to $y$.
	By definition the group $H_2(G,c)$ is the quotient of
	$H_2(G,\mathbb Z)$ by all classes $\langle x,y \rangle$ for $x \in c$
	such that $x$ and $y$ commute.

	We next show that the quotient map $H_2(G, \mathbb Z) \to H_2(G, c)$ is
	an isomorphism.
	Let $\langle x \rangle$ denote
	the subgroup of $G$ generated by $x \in c$, isomorphic to $\mathbb Z/2 \mathbb
	Z$.
	A direct computation shows that the only elements of $G$ commuting with
	$x$ are $\{x, \id\}$, and in either case, 
	we have $\langle x,y
	\rangle$ lies in the image of $H_2( \langle x \rangle, \mathbb Z) \to
	H_2(G, \mathbb Z)$. For $A$ any abelian group, $H_2(A, \mathbb Z) =
	\wedge^2 A$, so $H_2( \langle x \rangle, \mathbb Z) \simeq \wedge^2
	(\mathbb Z/2 \mathbb Z)$ is the trivial group, and hence $H_2(G, \mathbb Z) \simeq H_2(G,
	c)$.

	It is a standard group cohomology fact that $H_2(H, \mathbb Z) \simeq
	\wedge^2 H$. Hence, to conclude the proof, it suffices to show
	 $H_2(H, \mathbb Z) \simeq H_2(G, \mathbb Z)$.
	Indeed, this follows from the spectral sequence associated to the exact sequence $H
	\to G \to \mathbb Z/2 \mathbb Z$, as we now explain.
	We may observe that $H_0(\mathbb Z/2 \mathbb Z, H_1(H, \mathbb Z))$ is
	the coinvariants of the odd order group $H$ by the inversion action, and
	hence vanishes. Additionally, $H_1(\mathbb Z/2 \mathbb Z, H_1(H, \mathbb
	Z)) = H_2(\mathbb Z/2 \mathbb Z, H_0(H, \mathbb Z)) = 0$ as both groups
	are $2$-torsion but are also $H$-modules, and hence must vanish.
	Therefore, the spectral sequence yields an isomorphism $H_0(\mathbb
	Z/2\mathbb Z, H_2(H, \mathbb Z)	) \simeq H_2(G, \mathbb Z)$.
	Since $\mathbb Z/2 \mathbb Z$ acts trivially on $H_2(H, \mathbb Z)$,
	we find $H_2(H, \mathbb Z) \simeq H_2(G, \mathbb Z)$.
\end{proof}

\begin{definition}
	\label{definition:boundary-monodromy}
	We assume $k$ is a field of characteristic $0$.
	For $G$ a dihedral group of order $2 \ell$ with $\ell$ odd, $g \in G$, and $\phi: G \to \mathbb Z/2 \mathbb Z$ the surjection
	given by quotienting by $\mathbb Z/\ell \mathbb Z$,
	we explained in \autoref{lemma:components} why there is a bijection between components of
	$\cphurc G n c$ and elements $g$ with $\phi(g) \equiv n \bmod 2$ for $n$
	sufficiently large.

We let $\boundarycphurc G n c g$ denote the union of those connected components of $\cphurc G
n c$ mapping to $g$ under the map described in \autoref{lemma:components}.
We say such components have {\em boundary monodromy} $g$.
When $n$ is sufficiently large, it follows from \autoref{lemma:components} that 
$\boundarycphurc G n c g$ is either empty or connected.
We let $\stableboundary i G 
	c g$ denote the subspace of the stable homology $\stable i
	G \beta c$, 
which we can identify with the subspace
$H_i(\boundarycphurc G n c g,k) \subset H_i(\cphurc G n c,k)$
for $n \equiv \beta \bmod 2$ sufficiently large.
If $x \in \stable i G {\phi(g)} c$, we say
	$x$ has {\em boundary monodromy} $g \in G$ if $x$ lies in
	$\stableboundary i G
	c g$.
\end{definition}

\section{Cohomology of the two-sided $\mathcal K$ complex}
\label{section:2-sided}

In this section, we introduce the two-sided $\mathcal K$-complex associated to a
pair of modules and compute its
homology for a particular pair of modules.
In
\autoref{subsection:k-complex},
we first recall the usual ($1$-sided) $\mathcal K$-complex introduced in
\cite{EllenbergVW:cohenLenstra} and refined in
\cite{randal-williams:homology-of-hurwitz-spaces}.
We then define the two-sided $\mathcal K$-complex associated to a pair of
modules in
\autoref{subsection:two-sided-k-complex}.
In \autoref{subsection:easy-nullhomotopy}, we produce a simple nullhomotopy on
certain two-sided $\mathcal K$-complexes.
We then proceed to compute the homology of another two-sided $\mathcal K$-complex
in \autoref{subsection:particular-two-sided}.
The main result from this section we will use in the future is
\autoref{proposition:upper-triangle-exact}.
This is essentially equivalent to
\autoref{theorem:two-sided-g}, which computes the homology of a certain two-sided
$\mathcal K$-complex.
\autoref{theorem:two-sided-g},
which is a bit easier to state than
\autoref{proposition:upper-triangle-exact}, will help
motivate our proof of
\autoref{proposition:upper-triangle-exact}.
The key to both of these results is an explicit nullhomotopy of a large
subcomplex of the two-sided $\mathcal K$-complex given in
\autoref{lemma:nullhomotopy}. 

\subsection{Review of the usual $\mathcal K$-complex}
\label{subsection:k-complex}

Fix a field $k$ of arbitrary characteristic and
let $A := \oplus_{n \geq 0} C_\bullet(\phurc G n c, k)$ denote the algebra of
singular $k$-chains associated to the Hurwitz spaces parameterized by $G$ and
$c$.
We use $R:= \oplus_{n \geq 0} H_0(\phurc G n c, k)$ to denote the ring of
components of Hurwitz spaces.
Note that $A$ has a grading given by the index $n$ parameterizing the number of
branch points.


We say $M$ is a {\em discrete} graded $A$ module if $M$ is a chain complex concentrated
in degree 0, which is a module for $A$.
(In other words, $M$ is just what one usually thinks of as an $R$ module.)
The $\mathcal K$-complex, $\mathcal K(M)$, introduced in 
\cite[4.1]{EllenbergVW:cohenLenstra}
and also 
\cite[p.
16]{randal-williams:homology-of-hurwitz-spaces},
is the
chain complex
\begin{align}
\label{equation:k-complex}
\cdots \to k\{c\}^{\otimes n} \otimes M \to k\{c\}^{\otimes(n-1)} \otimes M \to
\cdots \to k\{c\} \otimes M \to M.
\end{align}
The boundary maps in \eqref{equation:k-complex} are
given by
\begin{equation}
\label{equation:k-differential}
\begin{aligned}
&d(g_1 \otimes \cdots \otimes g_n \otimes [m]) \\
&= \sum_{i=1}^n (-1)^i g_1 \otimes
\cdots g_{i-1} \otimes g_{i+1} \otimes \cdots \otimes g_n \otimes (g_n^{-1}
\cdots g_{i+1}^{-1} g_i g_{i+1} \cdots g_n) \cdot [m].
\end{aligned}
\end{equation}

\begin{remark}
\label{remark:}
Suppose $A \to R :=
\oplus_{n \geq 0} H_0(\phurc G n c,k)$
denotes the quotient map.
If $M$ is an $R$ module,
Ellenberg Venkatesh and Westerland
introduced the $\mathcal K$-complex 
$\mathcal K(M)$ associated to such an $M$ \cite[4.1]{EllenbergVW:cohenLenstra}.
As mentioned in the last paragraph of \cite[p.
16]{randal-williams:homology-of-hurwitz-spaces}, $\mathcal K(M)$ is
the Koszul complex for computing the homology of the derived tensor product $k \otimes_A^{\mathbb L} M$.
\end{remark}

\subsection{Definition the two-sided $\mathcal K$-complex}
\label{subsection:two-sided-k-complex}

Generalizing the $\mathcal K$-complex described in \autoref{subsection:k-complex}, we
next introduce the two-sided $\mathcal K$-complex.
Still letting 
$A := \oplus_{n \geq 0} C_\bullet(\phurc G n c, k)$
as in \autoref{subsection:k-complex}, we now let $M$ be a discrete right $A$
module and $N$ be a discrete left $A$ module.

\begin{definition}
	\label{definition:two-sided-k-complex}
	Define the {\em two-sided $\mathcal K$-complex}, $\mathcal K(M, A, N)$ to be the
double complex with $(i,j)$ term given by
$\oplus_{\alpha + \beta = j} M_\alpha \otimes k\{c\}^i \otimes N_\beta$.
The the total differential on this double complex is the sum of a ``rightward'' differential $d_{\on{r}}$
and a ``leftward'' differential $d_{\on{l}}$ which are given as follows.
The rightward differential is given by 
\begin{equation}
\label{equation:k-right-differential}
\begin{aligned}
&d_{\on{r}}([m] \otimes g_1 \otimes \cdots \otimes g_n \otimes [\omega]) \\
&=
\sum_{i=1}^n (-1)^i [m] \otimes g_1 \otimes
\cdots g_{i-1} \otimes g_{i+1} \otimes \cdots \otimes g_n \otimes (g_n^{-1}
\cdots g_{i+1}^{-1} g_i g_{i+1} \cdots g_n) \cdot [\omega],
\end{aligned}
\end{equation}
while the leftward differential is given by
\begin{equation}
\begin{aligned}
\label{equation:k-left-differential}
&d_{\on{l}}([m] \otimes g_1 \otimes \cdots \otimes g_n \otimes [\omega]) \\
&=
\sum_{i=1}^n (-1)^i [m] \cdot g_i \otimes (g_i^{-1} g_1 g_i) \otimes
\cdots (g_i^{-1} g_{i-1} g_i) \otimes g_{i+1} \otimes \cdots \otimes g_n \otimes
[\omega].
\end{aligned}
\end{equation}
\end{definition}

It may be helpful to refer to
\autoref{figure:two-sided} for a visualization of a particular summand of a two-sided
$\mathcal K$ complex.

The next remarks are not needed in what follows, but may serve as some motivation
for our above definition.
\begin{remark}
	\label{remark:}
	There is an alternate, more abstract definition of the two sided
	$\mathcal K$ complex, $\mathcal K(M, A, N)$. Namely, it is equivalent to
	the derived tensor product $M \otimes_A^{\mathbb L} N$. To see this,
	note that $M \otimes_A^{\mathbb L} N$ has a double
	filtration, induced by filtering both $M$ and $N$ by
their gradings.
Taking the associated graded with respect to both filtrations, we get $(k
\otimes_A^{\mathbb L} k) \otimes_k (M \otimes_k N)$. Similarly to
\autoref{subsection:k-complex}, running the spectral sequence in each filtration
direction, we find that the spectral sequences collapses at the $E_2$-page
because of the discreteness of $M$ and $N$. 
We thus obtain a double complex whose underlying bigraded vector space has
homology agreeing with
$(k \otimes_A^{\mathbb L} k) \otimes_k (M \otimes_k N)$ and
whose total complex is quasi-isomorphic to $M \otimes_A^{\mathbb L} N$.
Concretely, one can show that the $E^1$ page of this spectral sequence
can be viewed as a double complex whose $(i,j)$th term 
agrees with $\mathcal K(M, A, N)_{i,j}$.
Moreover, the differential $\mathcal K(M,N)_{i,j}$ to
$\mathcal K(M,N)_{i-1,j}$
can be shown to agree with the differential on this $E^1$ page
of this spectral sequence
(see
\cite[Theorem 6.2 and p. 16]{randal-williams:homology-of-hurwitz-spaces}).
\end{remark}

\begin{remark}
	In the case that $M$ and $N$ are modules coming from the action of the
	Hurwitz space on a set, the two-sided $\mathcal{K}$-complex can be
	viewed as a cellular chain complex for the spaces in \cite[Theorem
	A.4.9]{landesmanL:the-stable-homology-of-non-splitting}.
\end{remark}

\subsection{A nullhomotopy for two-sided $\mathcal K$ complexes}
\label{subsection:easy-nullhomotopy}

We next show that certain types of two-sided
$\mathcal K$ complexes are exact.
The proof of the following lemma is inspired by
\cite[Lemma 4.11]{EllenbergVW:cohenLenstra}.
One can also deduce this by using \cite[Lemma
4.3.1]{landesmanL:the-stable-homology-of-non-splitting}.

\begin{lemma}
	\label{lemma:zero-and-iso-vanishes}
Let $A := \oplus_{n \geq 0} C_\bullet(\phurc G n c, k)$
	Suppose $M$ is a left $A$ module and $N$ is a right $A$ module.
	Assume there is some $h \in c$ so that either
	\begin{enumerate}
		\item $[h]$ acts invertibly on $M$ and by $0$ on $N$, or
		\item $[h]$ acts invertibly on $N$ and by $0$ on $M$.
	\end{enumerate}
	Then, there is a chain homotopy between the identity map on $\mathcal
	K(M, A,N)$ and the $0$ map on $\mathcal K(M, A, N)$. In particular, $\mathcal
	K(M, A,N)$ is exact.
\end{lemma}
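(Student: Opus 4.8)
The plan is to produce an explicit contracting homotopy, modeled on the nullhomotopy for the one-sided $\mathcal K$-complex in \cite[Lemma 4.11]{EllenbergVW:cohenLenstra}. The crucial structural observation is the asymmetry of the two differentials in \autoref{definition:two-sided-k-complex}: the leftward differential $d_{\on{l}}$ of \eqref{equation:k-left-differential} pushes the \emph{removed} letter into its module \emph{without conjugating it} (it conjugates the surviving letters to its left instead), whereas the rightward differential $d_{\on{r}}$ of \eqref{equation:k-right-differential} pushes the removed letter into its module only after conjugating it by everything to its right. This dictates on which side the homotopy should insert a copy of $h$, so that the inserted $h$ is fed into a module \emph{un-conjugated}.

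I would treat first the case where $[h]$ acts invertibly on the module fed by $d_{\on{r}}$ and by $0$ on the module fed by $d_{\on{l}}$ (the opposite case is handled below). Writing a generator as $[m]\otimes g_1\otimes\cdots\otimes g_n\otimes[\omega]$, define
\[
s\big([m]\otimes g_1\otimes\cdots\otimes g_n\otimes[\omega]\big) = (-1)^{n+1}\,[m]\otimes g_1\otimes\cdots\otimes g_n\otimes h\otimes [h]^{-1}\cdot[\omega],
\]
inserting $h$ as a new rightmost letter and dividing the entry on the $d_{\on{r}}$-side by $[h]$, which is legitimate since $[h]$ is invertible there. I would then compute $ds + sd$ for $d = d_{\on{r}}+d_{\on{l}}$ and verify three things. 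First, $d_{\on{r}}$ removing the inserted rightmost $h$ incurs no conjugation and contributes $h\cdot[h]^{-1}\cdot[\omega]=[\omega]$, reproducing the original generator. Second, $d_{\on{l}}$ removing that same rightmost $h$ contributes the factor $[m]\cdot[h]=0$, since $[h]$ acts by $0$ on the $d_{\on{l}}$-side and $d_{\on{l}}$ feeds $h$ in un-conjugated; this is exactly where the vanishing hypothesis is used. Third, all remaining terms cancel in pairs: the terms of $d_{\on{r}}s$ (resp.\ $d_{\on{l}}s$) coming from removing an original letter $g_k$ match, with opposite sign, the corresponding terms of $s\,d_{\on{r}}$ (resp.\ $s\,d_{\on{l}}$), the sign discrepancy being produced precisely by the factor $(-1)^{n}$ built into $s$. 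The only surviving contribution is the identity, giving $ds+sd=\id$ and hence the desired chain homotopy.

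For the remaining case, where $[h]$ is invertible on the $d_{\on{l}}$-module and $0$ on the $d_{\on{r}}$-module, a direct mirror of the above fails, and this is the main obstacle: prepending $h$ on the left makes the clean reproduction of the generator go through $d_{\on{l}}$ as wanted, but the offending term now comes from $d_{\on{r}}$ removing that leftmost $h$, which pushes a \emph{conjugate} $g_n^{-1}\cdots g_1^{-1}h\,g_1\cdots g_n$ of $h$ into the $d_{\on{r}}$-module, and the hypothesis only controls $[h]$ itself. I would circumvent this by invoking the word-reversal symmetry of the two-sided $\mathcal K$-complex: reversing $g_1\otimes\cdots\otimes g_n\mapsto g_n\otimes\cdots\otimes g_1$ and interchanging the two tensor factors yields an isomorphism onto the two-sided complex with the roles of $M$ and $N$ exchanged, under which $d_{\on{l}}$ and $d_{\on{r}}$ are swapped; since every element of $c$ is an involution, $[h]$ is carried to $[h]$, so this reduction converts the present case into the one already treated. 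The fiddly point to verify is that reversal is genuinely a chain isomorphism, i.e.\ that it intertwines the two differentials together with their conjugations and signs; once that is in hand, exactness of $\mathcal K(M,A,N)$ is immediate from the existence of the homotopy.
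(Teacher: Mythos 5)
Your first case ($[h]$ invertible on $N$, zero on $M$) is exactly the paper's proof: your $s$ is the paper's homotopy $S^0_{n,j}$, with the same three-part verification (the $d_{\on{r}}$-removal of the inserted $h$ reproduces the generator, the $d_{\on{l}}$-removal produces the factor $[m]\cdot[h]=0$, and the cross terms cancel in pairs via $[g_1][g_1^{-1}g_2g_1]=[g_2][g_1]$). The gap is in your treatment of the remaining case. The map $[m]\otimes g_1\otimes\cdots\otimes g_n\otimes[\omega]\mapsto[\omega]\otimes g_n\otimes\cdots\otimes g_1\otimes[m]$ is \emph{not} a chain map, and no choice of signs repairs it. The obstruction is precisely the asymmetry you yourself identified: $d_{\on{r}}$ conjugates the removed letter by the letters to its right and leaves the other letters alone, while $d_{\on{l}}$ feeds the removed letter in unconjugated and instead conjugates the letters to its left; after reversal these two behaviors do not correspond. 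Concretely, for $n=2$, reversing the $d_{\on{r}}$-term of $[m]\otimes g_1\otimes g_2\otimes[\omega]$ that removes $g_1$ gives $-\,[g_2^{-1}g_1g_2][\omega]\otimes g_2\otimes[m]$, whereas the $d_{\on{l}}$-term of the reversed element $[\omega]\otimes g_2\otimes g_1\otimes[m]$ that removes $g_1$ gives $+\,[g_1][\omega]\otimes(g_1^{-1}g_2g_1)\otimes[m]$; these tuples lie in the same braid orbit but are different vectors (for instance distinct basis vectors when $N$ is free) whenever $g_1$ and $g_2$ fail to commute. The involution hypothesis does make $[a]\mapsto[a]$ an anti-automorphism of the ring of components, so the roles of $M$ and $N$ can be exchanged \emph{abstractly}; but promoting that to an isomorphism of these Koszul-type complexes requires twisting the letters by conjugation (e.g.\ sending $g_i$ to its conjugate by $g_{i+1}\cdots g_n$), not the naive reversal, and your argument supplies no such twist.

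The cleanest repair, and what the paper's word ``analogous'' amounts to, is to keep the insertion next to $N$ (where $[h]$ acts by zero) and put the twist into the letters: define
\begin{align*}
S'([m]\otimes g_1\otimes\cdots\otimes g_n\otimes[\omega]) := (-1)^{n+1}\,[m][h]^{-1}\otimes (hg_1h^{-1})\otimes\cdots\otimes(hg_nh^{-1})\otimes h\otimes[\omega].
\end{align*}
Then the $d_{\on{l}}$-removal of the inserted $h$ multiplies $[m][h]^{-1}$ by $[h]$ and conjugates every letter back to $g_i$, returning the identity; the $d_{\on{r}}$-removal of the inserted $h$ feeds $h$ unconjugated into $N$, hence vanishes by hypothesis; and the cross terms cancel in pairs exactly as in your first case, using that conjugating $hg_kh^{-1}$ by $(hg_{k+1}h^{-1})\cdots(hg_nh^{-1})\cdot h = hg_{k+1}\cdots g_n$ yields $(g_{k+1}\cdots g_n)^{-1}g_k(g_{k+1}\cdots g_n)$, and using the identity $[h]^{-1}[hg_kh^{-1}]=[g_k][h]^{-1}$ (the same braiding relation, rewritten). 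This gives $dS'+S'd=\id$ directly, needs no involution hypothesis, and is in the spirit of the alternate homotopy recorded in \autoref{remark:alternate-homotopy}.
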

%
\begin{proof}
	We just explain the second case that $[h]$ acts invertibly on $N$ and by
	$0$ on $M$, as the first case is analogous.
	We now give an explicit chain homotopy between the identity and $0$.

	We start with a warm up computation. Define $S^0_{n,j}$ by 
	\begin{align*}
		S^0_{n,j}: \mathcal K(M,A,N) & \rightarrow \mathcal K(M,A,N) \\
		[m] \otimes g_1 \otimes \cdots \otimes g_n \otimes
	[\omega] &\mapsto (-1)^{n+1} [m] \otimes g_1 \otimes \cdots
		\otimes g_n \otimes h \otimes
		[h]^{-1} \cdot [\omega].
	\end{align*}
	Note that the definition of this chain map uses that $[h]$ acts
	invertibly on $N$.
	Using that $[g_1][g_1^{-1} g_2 g_1] = [g_2][g_1]$, a routine computation
	similar (but easier than) the computation in
	\autoref{lemma:nullhomotopy}
	verifies that
	\begin{align*}
		&(d S^0_{n,j} + S^0_{n-1,j} d)([m] \otimes g_1 \otimes \cdots \otimes g_n \otimes
		[\omega]) 
		\\ &= [m] \otimes g_1 \otimes \cdots \otimes g_n \otimes
		[\omega] + [m][h] \otimes h^{-1} g_1 h \otimes \cdots
		\otimes h^{-1} g_n h \otimes
		[h]^{-1} [\omega].
	\end{align*}
	Now, using that 
	$[m] \cdot [h] = 0$,
	the map $S_{n,j}^0$ gives a chain homotopy between the identity and
	the $0$ map. This implies that
	$\mathcal K(M,A,N)$ is exact.
\end{proof}

\begin{notation}\label{notation:module}
	For $g \in c$, let $k[g]$ be the graded $A$-bimodule that is $k$ in each
	natural number degree, such that left and right multiplication by $[g]$
	acts by shifting. Moreover, for any $g\neq h \in c$, $[h]$ acts by $0$.
	We use $k[g,g^{-1}]$ to denote the analogous graded $A$ modules with integer
	gradings. (So right and left multiplication by $[g]$ shifts the grading
	while $[h]$ acts by $0$ for $h \neq g$.)
\end{notation}
Note there are natural graded bimodule maps $R \to k[g] \to k[g,g^{-1}]$, where the first map is given by quotienting out by elements that are not multiples of $g$. 
\begin{lemma}
\label{lemma:g-stable-k-complex}
The complex $\mathcal K(k, A, k[g, g^{-1}])$ is exact.
In particular, the complex $\mathcal K(M)$ associated to $M := k[g,g^{-1}]$, as defined in
\eqref{equation:k-complex}, is exact.
\end{lemma}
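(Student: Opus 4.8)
The plan is to obtain this as a direct consequence of \autoref{lemma:zero-and-iso-vanishes}, applied with $h = g$. First I would pin down the roles of the two modules appearing in $\mathcal K(k, A, k[g,g^{-1}])$: the left-hand factor is the trivial module $k$, concentrated in degree $0$, and the right-hand factor is the bimodule $k[g,g^{-1}]$ of \autoref{notation:module}. Since $k[g,g^{-1}]$ is a copy of $k$ in every integer degree and $[g]$ shifts the grading, $[g]$ acts invertibly on $k[g,g^{-1}]$. On the other hand $[g]$ has degree $1$, so it sends the degree-$0$ part of $k$ into degree $1$, which is $0$; thus $[g]$ acts by $0$ on $k$. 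This is precisely the hypothesis of \autoref{lemma:zero-and-iso-vanishes} (the case where $[g]$ is invertible on the one module and zero on the other), so that lemma immediately gives that $\mathcal K(k, A, k[g,g^{-1}])$ is exact.

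For the ``in particular'' clause, I would identify the one-sided complex $\mathcal K(k[g,g^{-1}])$ of \eqref{equation:k-complex} with the two-sided complex $\mathcal K(k, A, k[g,g^{-1}])$ on the nose. The crucial point is that the leftward differential $d_{\on{l}}$ of \eqref{equation:k-left-differential} vanishes identically here: each of its terms has the shape $[m] \cdot g_i \otimes \cdots$ with $[m]$ in the degree-$0$ part of $k$, so $[m] \cdot g_i$ lands in the degree-$1$ part of $k$, which is $0$. Hence the total differential collapses to the rightward differential $d_{\on{r}}$ of \eqref{equation:k-right-differential}, and after dropping the scalar $[m] \in k$ this is exactly the one-sided differential \eqref{equation:k-differential} with $M = k[g,g^{-1}]$. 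So the two complexes literally coincide, and exactness of the two-sided complex forces exactness of the one-sided one.

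Since the statement is a corollary of \autoref{lemma:zero-and-iso-vanishes}, I do not anticipate a genuine obstacle; the only thing to handle carefully is the bookkeeping. Concretely, I would make sure to match the invertible-versus-zero roles in \autoref{lemma:zero-and-iso-vanishes} to the correct sides, and to verify that concentrating $k$ in degree $0$ truly kills $d_{\on{l}}$, so that the two-sided complex collapses onto \eqref{equation:k-complex} exactly rather than only up to quasi-isomorphism.
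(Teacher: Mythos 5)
Your proposal is correct and takes essentially the same approach as the paper: the paper likewise deduces exactness of $\mathcal K(k, A, k[g,g^{-1}])$ from \autoref{lemma:zero-and-iso-vanishes} (with $h = g$ acting by zero on $k$ and invertibly on $k[g,g^{-1}]$), and then transfers exactness to the one-sided complex $\mathcal K(k[g,g^{-1}])$. The only cosmetic difference is in that second step: the paper views the two-sided complex as a sum of ($\mathbb Z$ many, shifted) copies of $\mathcal K(M)$ so that $\mathcal K(M)$ is a summand, whereas you identify the two complexes outright after observing that $d_{\on{l}}$ vanishes; both identifications are valid ways of organizing the same decomposition.
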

\begin{proof}
	Since $g$ acts by $0$ on $k$ and invertibly on $k[g,g^{-1}]$, it follows
	that $\mathcal K(k,A,k[g,g^{-1}])$ is exact.
	Since $\mathcal K(k,A,k[g,g^{-1}])$ is a sum of $\mathbb Z$ many shifted
	copies of $\mathcal K(M)$, so that, in particular, $\mathcal K(M)$ is a
	summand of $\mathcal K(k,A, k[g,g^{-1}])$,
	we obtain $\mathcal K(M)$ is exact.
\end{proof}

\subsection{A particular two-sided $\mathcal K$-complex}
\label{subsection:particular-two-sided}

In this subsection, we investigate the homology of a particular two-sided
$\mathcal K$ complex.
This two-sided $\mathcal K$ complex will appear in the homology of Hurwitz
spaces, and understanding its homology is the crucial step in understanding the
stable homology of Hurwitz spaces.
\begin{notation}
\label{notation:k-complex}
We fix a group $G$ and a conjugacy class $c \subset G$ generating
$G$ and a field $k$.
We assume that the for any $h \in c$, the centralizer of $h$ in $c$ is
precisely $h$. 
We let $A := \oplus_{n \geq 0} C_\bullet(\phurc G n c, k)$ denote the algebra of
singular $k$-chains associated to the Hurwitz spaces parameterized by $G$ and
$c$.
We let $B := C_*(\conf_n, k)$ denote the corresponding algebra of
singular $k$-chains on configuration space.
\end{notation}
\begin{remark}
\label{remark:}
We note that this condition on the centralizer in \autoref{notation:k-complex} 
will hold for dihedral groups, with $c$ the conjugacy class of involutions.
\end{remark}

Our goal in this section is to compute the homology of $\mathcal K( k[g],A, k[g,g^{-1}])$.
With notation as in \autoref{notation:k-complex},
There is a map $B \to A$ induced by the inclusion $\conf_n \to
\cphurc G n c$ with image the component of Hurwitz space parameterizing covers
whose monodromy is $g$ at every branch point. (These will be
disconnected covers, unless $G$ is already a cyclic group.)
\begin{theorem}
\label{theorem:two-sided-g}
The map $\mathcal K(k[g], B, k[g, g^{-1}]) \to \mathcal K(k[g], A, k[g, g^{-1}])$, induced by the above map $B \to A$, is an isomorphism.
\end{theorem}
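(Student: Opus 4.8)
The plan is to identify the asserted map with the inclusion of an explicit subcomplex and then show the complementary quotient is acyclic. For each $i$ the middle tensor factor of the $B$-complex is one-dimensional, and the map $B \to A$ identifies its generator with the pure tensor $g \otimes \cdots \otimes g \in k\{c\}^{\otimes i}$. Thus the map in question is the inclusion of the subspace $F_0 \subseteq \mathcal K(k[g], A, k[g,g^{-1}])$ spanned by tuples $(g_1, \ldots, g_i)$ all of whose entries equal $g$. First I would check that $F_0$ is a subcomplex: in $d_{\on{r}}$ the surviving entries are left unchanged and the deleted copy of $g$ contributes the conjugate $g$ acting invertibly on $k[g,g^{-1}]$, while in $d_{\on{l}}$ only the terms with $g_i = g$ survive (since $[m]\cdot g_i = 0$ for $g_i \neq g$ on $k[g]$) and conjugation by $g$ fixes $g$. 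Hence both differentials preserve the all-$g$ condition and restrict on $F_0$ to precisely the $B$-complex differential, so the map is a genuine inclusion of chain complexes. It therefore suffices to prove that the quotient complex $Q := \mathcal K(k[g], A, k[g,g^{-1}])/F_0$, spanned by tuples with at least one non-$g$ entry, is acyclic; this yields the claimed isomorphism on homology.

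To analyze $Q$ I would filter by the number $p$ of entries not equal to $g$. A key observation is that $d_{\on{l}}$ preserves $p$ exactly: its only nonzero terms delete a copy of $g$ and conjugate the entries to its left by $g$, and since the centralizer hypothesis in \autoref{notation:k-complex} guarantees that conjugation by $g$ carries $c \setminus \{g\}$ to itself and fixes $g$, the non-$g$ count is unchanged. By contrast $d_{\on{r}}$ is non-increasing in $p$, lowering it precisely when it deletes a non-$g$ entry. On the associated graded $\on{gr}_p Q$ (with $p \ge 1$) the induced differential is therefore $d_{\on{l}}$ together with the part of $d_{\on{r}}$ deleting copies of $g$. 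The plan is then to peel off the maximal block of $g$'s sitting between the rightmost non-$g$ entry and the right-hand module factor $k[g,g^{-1}]$: deleting $g$'s from this trailing block, together with the invertible action of $[g]$ on $k[g,g^{-1}]$, reproduces the one-sided complex $\mathcal K(k[g,g^{-1}])$, which is exact by \autoref{lemma:g-stable-k-complex}. Showing that this trailing block splits off as an exact tensor factor, after straightening out the conjugation introduced by $d_{\on{l}}$, would give acyclicity of each $\on{gr}_p Q$, hence of $Q$.

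The main obstacle is exactly this last step, namely producing the explicit contraction of $Q$. The naive homotopy, the operator $S^0$ of \autoref{lemma:zero-and-iso-vanishes} that appends a $g$ at the right end and divides $[\omega]$ by $[g]$, does not by itself suffice: one computes $d S^0 + S^0 d = \id + \Phi$, where $\Phi$ conjugates every entry by $g$ and shifts the two module factors by $[g]^{\pm 1}$, and on $Q$ the operator $\Phi$ is not the identity, since it permutes the non-$g$ entries nontrivially. The real content is to refine $S^0$ so that the unwanted conjugation terms telescope against one another, organizing the homotopy by the trailing block of $g$'s and the position of the rightmost non-$g$ entry, so that the total error cancels on $Q$. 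This explicit nullhomotopy is the technical heart of the argument (the role played by \autoref{lemma:nullhomotopy}); once it is in place, $Q$ is acyclic and the inclusion $F_0 \hookrightarrow \mathcal K(k[g], A, k[g,g^{-1}])$ is a quasi-isomorphism, proving the theorem.
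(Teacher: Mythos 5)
Your proposal is correct and takes essentially the same route as the paper: the paper's proof likewise identifies the map with the inclusion of the all-$g$ direct summand (via \autoref{lemma:two-sided-k-complex-descrption} and \autoref{remark:z-summand}) and concludes by citing exactness of the complementary piece spanned by tuples containing an entry different from $g$, which is exactly \autoref{lemma:nullhomotopy}. The filtration-by-$p$ detour in your second paragraph is unnecessary, and the refined homotopy you call the main obstacle is precisely the map $\sigma_n$ of \autoref{lemma:nullhomotopy}, which the paper constructs by inserting $hgh^{-1}$ before the rightmost non-$g$ entry and filtering by the length of the trailing block of $g$'s, just as you anticipate.
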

We will prove \autoref{theorem:two-sided-g} in
\autoref{subsection:proof-two-sided-theorem}.
The following is a direct consequence of the spectral sequence described in
\autoref{subsection:two-sided-k-complex}.

\begin{figure}
	\begin{tikzcd}[column sep=small, row sep=small]
		\qquad &k\{c^0\} && k\{c^0\}&&
		k\{c^0\} &&
		k\{c^0\} && \\
		\cdots \ar {ur} && k\{c^1\} \ar{ur}\ar{ul} && k\{c^1\}
		\ar{ur}\ar{ul} && k\{c^1\} \ar{ur}\ar{ul} && \\
		\qquad & k\{c^2\} \ar{ur}\ar{ul} && k\{c^2\}\ar{ur}\ar{ul}&&
		k\{c^2\}\ar{ur}\ar{ul} && \\
		\cdots \ar {ur} && k\{c^3\} \ar{ur}\ar{ul} && k\{c^3\}
		\ar{ur}\ar{ul} && & \\
		\qquad & k\{c^4\} \ar{ur}\ar{ul} && k\{c^4\} \ar{ur}\ar{ul} &&
		&&  \\
		\cdots \ar{ur} && k\{c^5\} \ar{ur}\ar{ul} &&
		&& &&  \\
		\qquad & k\{c^6\} \ar{ur} \ar{ul} &&  && &&  \\
		\cdots \ar{ur} &&   && && &&  \\
	\end{tikzcd}
	\caption{The complex above depicts the summand $C_{\bullet,\bullet}$ (defined in
	\autoref{remark:z-summand}) of the
two sided $\mathcal{K}$-complex, $\mathcal K(k[g], A,k[g,g^{-1}])$.}
	\label{figure:two-sided}
\end{figure}
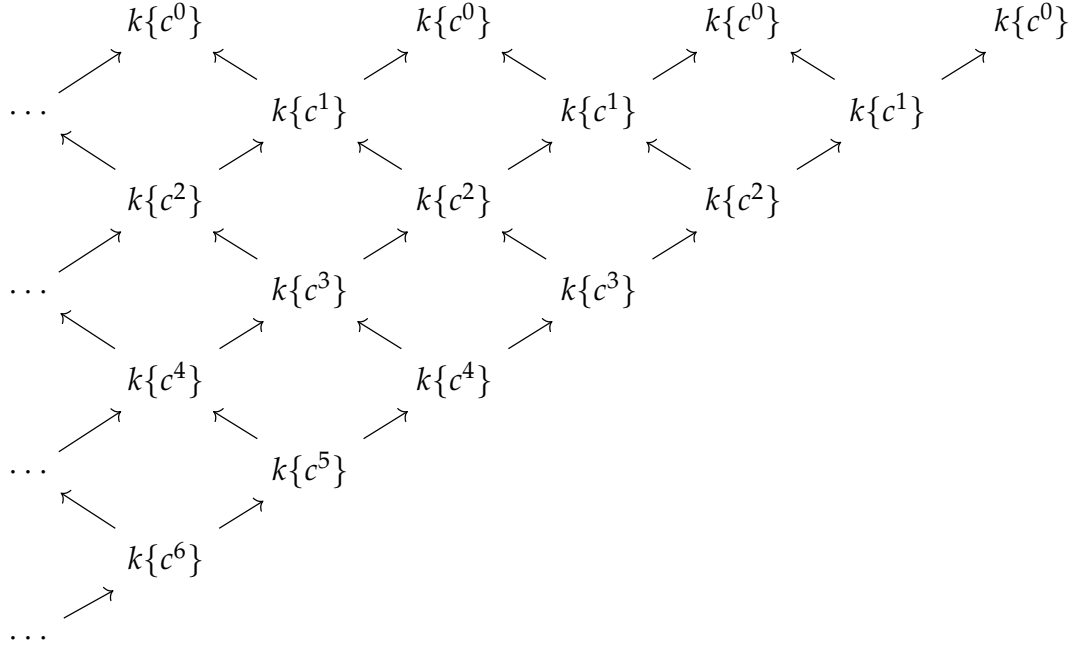

\begin{lemma}
	\label{lemma:two-sided-k-complex-descrption}
	There is a collection of double complexes $C^z_{\bullet, \bullet}$,
	indexed by $z \in \mathbb Z$ so that
	the sum of the associated total complex,
	$\bigoplus_{z \in \mathbb Z} C^{z}_{\bullet, \bullet}$,
	viewed as an object in the derived
	category, is isomorphic to
	$\mathcal K(k[g], A,k[g,g^{-1}])$.
	Specifically, $C^z_{i, j} = k\{c\}^{z - i -j}$ 
	and there are differentials
	$C^z_{i,j} \to C^z_{i, j+1}$ and $C^z_{i,j} \to C^z_{i+1,j}$ given by the maps
$d_{\on{r}}, d_{\on{l}}$ of \eqref{equation:k-right-differential}
and \eqref{equation:k-left-differential} associated to the modules $M = k[g], N=
k[g,g^{-1}]$ from \autoref{notation:module}.
\end{lemma}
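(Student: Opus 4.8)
The plan is to treat this as a bookkeeping statement: the explicit double complex $\mathcal K(k[g], A, k[g,g^{-1}])$ already splits as a direct sum of subcomplexes indexed by the total internal (``number of branch points'') grading, and each summand, after a relabeling of the two homological directions, takes exactly the asserted form. No homological input beyond the definitions is needed; the work is entirely in keeping the several gradings straight.

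First I would unwind \autoref{definition:two-sided-k-complex}. The $(i,j)$ term of $\mathcal K(k[g], A, k[g,g^{-1}])$ is $\bigoplus_{\alpha + \beta = j} (k[g])_\alpha \otimes k\{c\}^i \otimes (k[g,g^{-1}])_\beta$. By \autoref{notation:module}, $(k[g])_\alpha = k$ for $\alpha \geq 0$ (and $0$ otherwise) while $(k[g,g^{-1}])_\beta = k$ for every $\beta \in \mathbb Z$. Since each nonzero graded piece is one-dimensional, every summand $(k[g])_\alpha \otimes k\{c\}^i \otimes (k[g,g^{-1}])_\beta$ is canonically identified with $k\{c\}^i$. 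Thus a basis for the whole complex is recorded by a triple $(\alpha, i, \beta)$ with $\alpha \geq 0$, $i \geq 0$, $\beta \in \mathbb Z$, together with an element of $c^i$. Next I would introduce $z := \alpha + i + \beta$, the total number of branch points, and check that both differentials preserve it: $d_{\on r}$ of \eqref{equation:k-right-differential} deletes one of the $i$ tensor factors ($i \mapsto i-1$) and absorbs it into $N = k[g,g^{-1}]$ ($\beta \mapsto \beta+1$, with $\alpha$ fixed), while $d_{\on l}$ of \eqref{equation:k-left-differential} deletes a factor ($i \mapsto i-1$) and absorbs it into $M = k[g]$ ($\alpha \mapsto \alpha+1$, with $\beta$ fixed); in either case $\alpha + i + \beta$ is unchanged. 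Hence the bicomplex structure splits as a direct sum $\bigoplus_{z \in \mathbb Z}$ of subcomplexes, and I define $C^z_{\bullet,\bullet}$ to be the part of total degree $z$.

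Finally I would reindex. Within $C^z$ I take $(\alpha,\beta)$ as the two homological coordinates, setting $i' = \alpha$ and $j' = \beta$; then the number of $c$-tensor factors is $i = z - \alpha - \beta = z - i' - j'$, so $C^z_{i,j} = k\{c\}^{z - i - j}$ as claimed, supported where $i \geq 0$ and $z - i - j \geq 0$, which is precisely the staircase region drawn in \autoref{figure:two-sided}. Under this relabeling $d_{\on r}$, which raises $\beta$, becomes $C^z_{i,j} \to C^z_{i, j+1}$, and $d_{\on l}$, which raises $\alpha$, becomes $C^z_{i,j} \to C^z_{i+1, j}$, matching the orientation in the statement. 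Because the splitting is a literal direct-sum decomposition of double (hence total) complexes, it is in particular an isomorphism in the derived category; combined with the identification of the explicit double complex as a model for $M \otimes_A^{\mathbb L} N$ via the spectral sequence of \autoref{subsection:two-sided-k-complex}, this yields the stated isomorphism. I expect the only genuine obstacle to be notational, namely carrying the three gradings $(\alpha, i, \beta)$ consistently and verifying that the reindexed $d_{\on l}, d_{\on r}$ point in the directions prescribed for $C^z_{\bullet,\bullet}$ and in \autoref{figure:two-sided}.
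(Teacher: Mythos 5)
Your proposal is correct, and in fact it proves something slightly stronger than the lemma asserts: the decomposition you describe---splitting by the branch-point grading $z = \alpha + i + \beta$, which both $d_{\on{l}}$ and $d_{\on{r}}$ visibly preserve, and then reindexing each summand by the pair $(\alpha,\beta)$ of internal degrees of $k[g]$ and $k[g,g^{-1}]$---is a literal isomorphism of total complexes, not merely an isomorphism in the derived category. The paper itself writes down no proof: it asserts the lemma as ``a direct consequence of the spectral sequence'' of \autoref{subsection:two-sided-k-complex}, i.e.\ of the identification of $\mathcal K(M,A,N)$ with the derived tensor product $M \otimes_A^{\mathbb L} N$ equipped with the double filtration coming from the gradings of $M$ and $N$; the outcome of your reindexing is exactly what \autoref{remark:z-summand} records after the fact, namely that $C^z_{s,u}$ consists of the terms $g^s \otimes k\{c\}^t \otimes g^u$ with $s+t+u = z$ and $s \geq 0$. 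So the two routes land on the same decomposition; yours is the more elementary one, working directly with the explicit complex of \autoref{definition:two-sided-k-complex} and using no homological machinery, which is entirely adequate because the lemma is a statement about that explicit complex and a chain-level direct-sum decomposition certainly descends to the derived category. What the paper's spectral-sequence framing buys instead is the conceptual identification with $M \otimes_A^{\mathbb L} N$, which is what makes functoriality arguments (as in the proof of \autoref{theorem:two-sided-g}) transparent; your closing appeal to that identification is harmless but unnecessary for the statement as written, and your verification that the reindexed $d_{\on{r}}$, $d_{\on{l}}$ have bidegrees $(0,1)$ and $(1,0)$ respectively, with support precisely the staircase region of \autoref{figure:two-sided}, is the only real content the lemma requires.
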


\begin{remark}
	\label{remark:z-summand}
	It follows immediately from the description of
	\autoref{lemma:two-sided-k-complex-descrption} that $C^z_{s,u}$ can be identified with those
terms of the form $g^s \otimes k\{c\}^{t} \otimes g^u$ where $s + t + u = z$ and
$s \geq 0$.
Additionally, $C^z_{i,j} \simeq C^{z'}_{i,j+z-z'},$ compatibly with the
differentials, 
for any
$z, z'$, as follows from the definition. 
Hence, it follows that these two complexes are isomorphic, up to a shift.
To simplify notation a bit, we write  $C_{\bullet, \bullet} :=C^0_{\bullet,
\bullet}$. See \autoref{figure:two-sided} for a picture of this complex.
\end{remark}

\begin{remark}
\label{remark:concrete-g-differentials}
Let $\delta_{h,h'}$ be $1$ if $h = h'$ and $0$ otherwise,
Using the definition of the differentials for the two-sided $\mathcal K$ complex
given in \autoref{definition:two-sided-k-complex},
we can describe the differentials on $C_{\bullet, \bullet}$ as
\begin{equation}
\label{equation:g-k-right-differential}
\begin{aligned}
&d_{\on{r}}([g^\alpha] \otimes g_1 \otimes \cdots \otimes g_n \otimes [g^\beta]) \\
&=
\sum_{i=1}^n \delta_{g, g_n^{-1}
\cdots g_{i+1}^{-1} g_i g_{i+1} \cdots g_n} (-1)^i [g^\alpha] \otimes g_1 \otimes
\cdots g_{i-1} \otimes g_{i+1} \otimes \cdots \otimes g_n \otimes [g^{\beta+1}].
\end{aligned}
\end{equation}
and
\begin{equation}
\label{equation:g-k-left-differential}
\begin{aligned}
&d_{\on{l}}([g^\alpha] \otimes g_1 \otimes \cdots \otimes g_n \otimes [g^\beta]) \\
&=
\sum_{i=1}^n \delta_{g, g_i}(-1)^i  [g^{\alpha+1}] \otimes (g_i^{-1} g_1 g_i) \otimes
\cdots (g_i^{-1} g_{i-1} g_i) \otimes g_{i+1} \otimes \cdots \otimes g_n \otimes
[g^\beta].
\end{aligned}
\end{equation}
\end{remark}

\begin{notation}
	\label{notation:t-definition}
	Let $C'_{\bullet, \bullet}$ denote the subcomplex of $C_{\bullet, \bullet}$
which is spanned by all basis elements $(g_1, \ldots, g_n) \in
c^n$ with some $g_i \neq g$.
In particular, $\dim C'_{i,j} = |c ^{i+j}| - 1$.
Let $T_n$ denote the total complex $\oplus_{i+j = n} C'_{i,j}$
associated to $C'_{i,j}$.
\end{notation}

Consider a basis element $v$ in $C'_{i,j}$ of the form
\begin{align}
	\label{equation:g-form}
	(x_1, \ldots, x_s, h, \underbrace{g, \ldots, g}_{t \text{ times}})
\end{align}
with $h \neq g$.
Define a filtration $F^i_n \subset T_n$ spanned by those $v$ as above with $t
\leq i$.
Consider the map $\sigma_n : T_n \to T_{n+1}$ sending
\begin{align*}
	(x_1, \ldots, x_s, h, \underbrace{g, \ldots, g}_{t \text{ times}})
	\mapsto 
	(-1)^n \cdot (x_1, \ldots, x_s, hgh^{-1}, h, \underbrace{g, \ldots, g}_{t \text{
	times}}).
\end{align*}

\begin{remark}
	\label{remark:alternate-homotopy}
	In \cite{landesmanL:the-stable-homology-of-non-splitting} we use a geometric
	map which corresponds algebraically to the chain homotopy
\begin{align*}
	(x_1, \ldots, x_s, h, \underbrace{g, \ldots, g}_{t \text{ times}})
	\mapsto 
	(-1)^n \cdot (gx_1g^{-1}, \ldots, gx_s g^{-1}, g, h, \underbrace{g, \ldots, g}_{t \text{
	times}}).
\end{align*}
Similarly to \autoref{lemma:nullhomotopy}, one can also verify this defines a chain homotopy between an isomorphism
and $0$. This alternate chain homotopy was pointed out to us by
Andrea Bianchi.
\end{remark}

\begin{lemma}
	\label{lemma:nullhomotopy}
	The map
$\sigma_\bullet: T_\bullet \to T_{\bullet + 1}$ defines a
chain homotopy between an isomorphism and $0$.
Hence, the complex $T_\bullet^n$ is exact.
\end{lemma}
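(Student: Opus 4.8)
The plan is to show that $\phi := d\sigma_\bullet + \sigma_\bullet d$, where $d = d_{\on{r}} + d_{\on{l}}$ is the total differential on $T_\bullet$, is an \emph{isomorphism} of chain complexes. Since $\phi$ is by its very form null-homotopic, it induces the zero map on $H_*(T_\bullet)$; being simultaneously an isomorphism, it forces $H_*(T_\bullet) = 0$, which is the asserted exactness. (That $\phi$ is a chain map is automatic: $d\phi = d\sigma d = \phi d$ using $d^2 = 0$.) Everything therefore reduces to proving bijectivity of $\phi$, and for this I would use the filtration $F^\bullet_n$ of \autoref{notation:t-definition} by the number $t$ of trailing copies of $g$.

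The first step is to check that both $\sigma$ and $d$ are filtered. For $\sigma$, writing a basis vector as $v = (x_1,\dots,x_s,h,g,\dots,g)$ with $h \neq g$ and $t$ trailing $g$'s, the inserted entry $hgh^{-1}$ satisfies $hgh^{-1}\ne g$: by the centralizer hypothesis of \autoref{notation:k-complex} the only element of $c$ commuting with $g$ is $g$ itself, so $hgh^{-1}=g$ would force $h=g$. Hence $\sigma(v)$ again has exactly $t$ trailing $g$'s, i.e. $\sigma(F^t)\subseteq F^t$. Inspecting the differentials of \autoref{remark:concrete-g-differentials}, each of $d_{\on{r}}$ and $d_{\on{l}}$ either deletes one of the trailing $g$'s (landing in $F^{t-1}$) or deletes an entry among $x_1,\dots,x_s$ (leaving $h$ and the trailing block intact, hence staying in $F^t$); thus $d(F^t)\subseteq F^t$ as well, and $\phi$ preserves the filtration.

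The crux is then to compute $\phi$ on the associated graded $\mathrm{gr}^t = F^t/F^{t-1}$. I would expand $d\sigma(v)$ and $\sigma d(v)$ and track only the summands landing in $F^t$. In $d_{\on{r}}\sigma(v)$ one distinguished summand deletes the newly inserted $hgh^{-1}$: its rightward conjugate is $(hg^t)^{-1}(hgh^{-1})(hg^t)=g$, so it is indeed deleted, and restoring the sign $(-1)^n$ from $\sigma$ together with the deletion sign $(-1)^{s+1}$ gives exactly $(-1)^t v$, since $n=s+1+t$. Every remaining $F^t$-summand deletes some $x_j$; here the key observation is that the rightward-conjugacy and ``$=g$'' conditions governing $d_{\on{r}}$ and $d_{\on{l}}$ are unaffected by the insertion (for $d_{\on{r}}$ the relevant conjugate merely gets conjugated by $g$, which does not change whether it equals $g$), so these summands occur identically in $d\sigma(v)$ and in $\sigma d(v)$ but carry opposite signs $(-1)^n$ versus $(-1)^{n-1}$, and therefore cancel in pairs. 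Consequently $\phi(v)\equiv (-1)^t v \pmod{F^{t-1}}$, so $\phi$ acts as $(-1)^t\cdot\mathrm{id}$ on $\mathrm{gr}^t$.

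Since the filtration is finite in each degree, $0=F^{-1}_n\subseteq F^0_n\subseteq\cdots\subseteq F^{n-1}_n=T_n$, and $\phi$ is an isomorphism on every graded piece, $\phi$ is an isomorphism, which completes the argument. I expect the main obstacle to be the sign and conjugacy bookkeeping of the previous paragraph: one must verify both that the single ``diagonal'' deletion reproduces $v$ with the correct sign and that all other same-filtration deletions pair up and cancel. The centralizer hypothesis enters only to guarantee $hgh^{-1}\ne g$, which is precisely what makes $\sigma$ filtered and isolates the diagonal term $(-1)^t v$.
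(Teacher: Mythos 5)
Your proposal is correct and follows essentially the same route as the paper's proof: the same filtration $F^\bullet_n$ by the number of trailing $g$'s, the same verification that $\sigma$ and $d$ are filtered (with the centralizer hypothesis forcing $hgh^{-1}\neq g$), and the same associated-graded computation in which the leftward terms cancel in pairs, the rightward $x_j$-deletion terms cancel because the relevant conjugacy condition is unchanged by an extra conjugation by $g$, and the lone surviving term deletes the inserted $hgh^{-1}$. Your sign $(-1)^t$ agrees with the paper's $(-1)^{n+s+1}$ since $n=s+1+t$, so the induced map on each graded piece is indeed an isomorphism.
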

\begin{proof}
	If $\sigma_\bullet$ defines a chain homotopy between an
	isomorphism and $0$, this implies $0$ acts the same on cohomology as an
	isomorphism, and so $T_\bullet^n$ is exact.

	We now show $\sigma_\bullet$ defines a chain homotopy between an
	isomorphism and $0$.
	Explicitly we wish to show $\sigma_{n-1} d_n + d_{n+1} \sigma_n$ induces
	an isomorphism $T_n \to T_n$.
	To prove this, we will first show that this map preserves the filtration
	$F^i_n$, and then that it induces isomorphisms on the associated graded
	of the filtration $F^i_n/F^{i-1}_n$.

	First, let us verify the map $\sigma_{n-1} d_n + d_{n+1} \sigma_n$
	preserves the filtration $F^i_n$.
	Note that the map $\sigma_{j}$ sends $F^i_j$ to $F^i_{j+1}$ by
	construction. Therefore, to show $\sigma_{n-1} d_n + d_{n+1} \sigma_n$
	preserves the filtration, it is enough to show $d_j$ sends $F^i_j$ to
	$F^i_{j-1}$.
	Moreover, 
	since the differential $d$ is a sum of $d_{\on{l}} + d_{\on{r}}$, it is
	enough to show that $d_{\on{l}}$ and $d_{\on{r}}$ separately preserve
	the filtration.
	Using the definition of these differentials appearing in 
	\eqref{equation:g-k-right-differential} and
	\eqref{equation:g-k-left-differential},
	we see each such differential is a sum of terms associated to each of
	the $n$ entries of $v$, so it is enough to show each of these terms lies
	in $F^i_{j-1}$. 
	First, let us analyze the terms in $d_{\on{l}}$. For the terms
	associated to $x_1, \ldots, x_s$ for $v$ as in \eqref{equation:g-form},
	these terms do not alter the $t$ entries $g$ on the right, and so the
	filtration is preserved for these terms.
	For the term corresponding to $h$, (in position $s + 1$,) since $h \neq g$, this term must be $0$.
	Finally, each of the terms associated to one of the rightmost $t$
	entries $g$ lie in the filtration $F^{i-1}_{j-1}$, which indeed lies in
	$F^i_{j-1}$.
	To conclude, we show $d_{\on{r}}$ sends $F^i_j$ to $F^i_{j-1}$.
	For each of the terms associated one of the $t$ rightmost $g$'s, this
	sends $F^i_j$ to $F^{i-1}_{j-1}$, which indeed lies in $F^i_{j-1}$.
	The term associated to $h$ vanishes because the conjugate of $h$ by any
	power of $g$ is not $g$ since $h \neq g$.
	Finally, we note that the term associated to one of the $x_1, \ldots, x_s$, for $x_i$
	as in \eqref{equation:g-form} lies in $F^i_{j-1}$ because the term is
	either $0$ or of the form $\pm (x_1, \ldots, x_{i-1}, x_{i+1}, \ldots,
	x_s, h, g, \ldots, g)$.

	Having shown the filtration is preserved, we conclude by checking that 
	the map induced by $\sigma_{n-1} d_n + d_{n+1} \sigma_n$ on
	$F^t_n/F^{t-1}_n$ is a multiple of the identity.
	In other words, we assume there are precisely $t$ elements $g$ to the
	right of $h$, for $h$ the rightmost term not equal to $g$.
	Via the analysis above, the terms associated to the right most $t+1$
	terms in $d_{\on{l}}$ and $d_{\on{r}}$ both vanish, and hence we only
	need analyze the remaining terms.
	First, we will compute the map induced by $\sigma_{n-1} d_n=\sigma_{n-1}
	d_{\on{r}} + \sigma_{n-1} d_{\on{l}}$.
	Using the definition of the differential from
	\eqref{equation:g-k-left-differential},
	\begin{align}
		\label{equation:dl-then-sigma}
		\sigma_{n-1} d_{\on{l}} (v) =
		(-1)^{n-1} \sum_{i=1}^s \delta_{g, x_i} (-1)^i \left(x_i^{-1} x_1 x_i, \ldots, x_i^{-1}
			x_{i-1} x_i, x_{i+1}, \ldots, x_s, hgh^{-1}, h,
	\underbrace{g, \ldots, g}_{t \text{ times}} \right).
	\end{align}
	Similarly, using the definition of the right differential from
	\eqref{equation:g-k-right-differential},
	\begin{align}
		\label{equation:dr-then-sigma}
		\sigma_{n-1} d_{\on{r}}(v) = 
		(-1)^{n-1} \sum_{i=1}^s &\delta_{g, (x_{i+1} \cdots x_s h g^t)^{-1} x_i
		(x_{i+1} \cdots x_s h g^t)} (-1)^i \\
		&\left(x_1, \ldots, x_{i-1}, x_{i+1}, \ldots,
			x_s, hgh^{-1}, h, \underbrace{g, \ldots, g}_{t \text{ times}} \right).
	\end{align}

	Next, we compute the map induced by 
	$d_{n+1} \sigma_{n}=
	d_{\on{r}} \sigma_{n}+ d_{\on{l}}\sigma_{n}$.
	Again, we have two computations, which are nearly the same as
	\eqref{equation:dl-then-sigma} and \eqref{equation:dr-then-sigma}.
	First, 
	\begin{align*}
\sigma_{n}(v) = \left(x_1, \ldots, x_s, hgh^{-1}, h,\underbrace{g, \ldots, g}_{t \text{ times}} \right).
	\end{align*}
  	Therefore,
	\begin{align}
		\label{equation:sigma-then-dl}
		d_{\on{l}} \sigma_n(v) =
	(-1)^{n} \sum_{i=1}^s \delta_{g, x_i} (-1)^i \left(x_i^{-1} x_1 x_i, \ldots, x_i^{-1}
			x_{i-1} x_i, x_{i+1}, \ldots, x_s, hgh^{-1}, h,
	\underbrace{g, \ldots, g}_{t \text{ times}} \right).
	\end{align}
	Note here that a priori there could have been an additional term
	associated to $hgh^{-1}$, but this vanishes because
	$\delta_{g,hgh^{-1}} = 0$ since
	$hgh^{-1}\neq g$, using the assumption from \eqref{notation:k-complex}
	that $g$ is its own centralizer and $h \neq g$.
	We see from \eqref{equation:dl-then-sigma} and
	\eqref{equation:sigma-then-dl} that 
	\begin{align}
		\label{equation:left-cancel}
	d_{\on{l}} \sigma_n +  \sigma_{n-1} d_{\on{l}} = 0,
	\end{align}
	since they have opposite signs.

	Finally, we compute
	\begin{equation}
		\label{equation:sigma-then-dr}
		\begin{aligned}
		d_{\on{r}}\sigma_{n} (v) = 
		(-1)^n \sum_{i=1}^s &\delta_{g, (x_{i+1} \cdots x_s (h gh^{-1}) h  g^t)^{-1} x_i
		(x_{i+1} \cdots x_s (h gh^{-1}) h  g^t)} (-1)^i \\
		&\left(x_1, \ldots, x_{i-1}, x_{i+1}, \ldots,
			x_s, hgh^{-1}, h, \underbrace{g, \ldots, g}_{t \text{ times}} \right)
			\\
			&+ (-1)^n (-1)^{s+1} (x_1, \ldots, x_s, h, \underbrace{g,
			\ldots, g}_{t \text{ times}}).
		\end{aligned}
	\end{equation}

	Now, we claim the terms in the sum over $i$ in
	\eqref{equation:sigma-then-dr} precisely cancels with the corresponding
	terms in \eqref{equation:dr-then-sigma}.
	To see this, we need only observe that 
	\begin{align*}
		\delta_{g, (x_{i+1} \cdots x_s (h gh^{-1}) h  g^t)^{-1} x_i
		(x_{i+1} \cdots x_s (h gh^{-1}) h  g^t)}
		=\delta_{g, (x_{i+1} \cdots x_s h g^t)^{-1} x_i
		(x_{i+1} \cdots x_s h g^t)}
	\end{align*}
	In other words, we wish to show 
	\begin{align*}
	(x_{i+1} \cdots x_s (h gh^{-1}) h  g^t)^{-1} x_i
		(x_{i+1} \cdots x_s (h gh^{-1}) h  g^t) = g
	\end{align*}
 if and only if
 \begin{align*}
	(x_{i+1} \cdots x_s h g^t)^{-1} x_i
(x_{i+1} \cdots x_s h g^t) = g.  
 \end{align*}
Conjugating both sides by $g^{t+1}$ in the
first equation, we obtain it is equivalent to 
\begin{align*}
	(x_{i+1} \cdots x_s h)^{-1} x_i
(x_{i+1} \cdots x_s h) = g.
\end{align*}
This is also equivalent to the second equation conjugated by
$g^t$. Hence, the two are equivalent, as claimed. 
This implies 
\begin{align}
	\label{equation:right-cancel}
	(d_{\on{r}} \sigma_n +  \sigma_{n-1} d_{\on{r}}) (v) = 
(-1)^n (-1)^{s+1} (x_1, \ldots, x_s, h, \underbrace{g,
			\ldots, g}_{t \text{ times}})
			= (-1)^{n+s+1} v.
\end{align}

Therefore, all in all, adding \eqref{equation:left-cancel} and
\eqref{equation:right-cancel}, we find that the map induced by 
$\sigma_{n-1} d_n + d_{n+1} \sigma_n$ is given by $v \mapsto (-1)^{n+s+1}(v)$, and
therefore is an isomorphism, as we wished to show.
\end{proof}

The key result we will need from this section is the following consequence of
the fact that $T^n_\bullet$ is exact.
\begin{proposition}
	\label{proposition:upper-triangle-exact}
	For $x \in \mathbb Z_{\leq 0}$, let $D^{x}_{\bullet, \bullet}$ denote the subcomplex of $C'_{\bullet,
	\bullet}$ (defined in \autoref{notation:t-definition}) 
	which is equal to $C'_{i,j}$ if $i \geq 0$ and $j \geq x$ and
	is $0$ otherwise.
	(Pictorially, $D^{x}_{\bullet, \bullet}$ makes
		a cone shape above $C'_{0,x} \simeq k\{c^{-x}\}$ in the picture
	\autoref{figure:two-sided}.)
	Then, $D^{x}_{\bullet, \bullet}$ is exact except possibly at position $(0,x)$.
\end{proposition}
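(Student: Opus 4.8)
The plan is to compute $H_\bullet(D^{x}_{\bullet,\bullet})$ by running the spectral sequence of the double complex and feeding in two explicit contracting homotopies: the homotopy $\sigma$ already constructed in \autoref{lemma:nullhomotopy}, which kills the rightward differential $d_{\on{r}}$, together with a mirror homotopy that kills the leftward differential $d_{\on{l}}$. The asymmetry between the two modules---$[g]$ acts invertibly on $k[g,g^{-1}]$ but only by a non-surjective shift on $k[g]$---is exactly what will obstruct the second homotopy in left-degree $0$ and thereby single out the surviving corner $(0,x)$.

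First I would reread \autoref{lemma:nullhomotopy} bigradedly. Its proof in fact establishes the two identities $d_{\on{l}}\sigma+\sigma d_{\on{l}}=0$ of \eqref{equation:left-cancel} and $d_{\on{r}}\sigma+\sigma d_{\on{r}}=\phi$ of \eqref{equation:right-cancel}, where $\phi$ acts by the sign $(-1)^{n+s+1}$ on each basis tuple and is therefore a bigraded automorphism. Since $\phi=d_{\on{r}}\sigma+\sigma d_{\on{r}}$ it commutes with $d_{\on{r}}$ (using $d_{\on{r}}^2=0$), and since $\sigma$ leaves the left module factor untouched it preserves the left index $i$; because $C'_{\bullet,\bullet}$ lives in the locus where (left degree) $+$ (tensor length) $+$ (right degree) $=0$ and $\sigma$ raises the tensor length by one, $\sigma$ must lower the right index $j$ by exactly one. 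Consequently $\phi^{-1}\sigma$ is an honest contracting homotopy for each full row $(C'_{i,\bullet},d_{\on{r}})$, so every such row is exact.

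Next I would filter $D^{x}_{\bullet,\bullet}$ by the left index $i$ and pass to the associated spectral sequence, whose $E_0$ differential is $d_{\on{r}}$. Because $\phi^{-1}\sigma$ lowers $j$ by one, on the truncated rows $C'_{i,x}\to C'_{i,x+1}\to\cdots\to C'_{i,-i}$ of $D^{x}$ it contracts everything except at the cut-off end $j=x$; that is, brutally truncating an exact row leaves homology only at the truncation point. Hence $E_1$ is concentrated in the bottom edge $j=x$, with $E_1^{i,x}=\ker\big(d_{\on{r}}\colon C'_{i,x}\to C'_{i,x+1}\big)$ for $0\le i\le -x$, and the spectral sequence collapses onto the edge complex $(E_1^{\bullet,x},d_{\on{l}})$. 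Thus $H_\bullet(D^{x})\cong H_\bullet(E_1^{\bullet,x},d_{\on{l}})$, and it remains to prove this edge complex is exact in every degree $i\ge 1$.

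For this last step I would invoke the left--right symmetry of the two-sided $\mathcal K$-complex to build the mirror of $\sigma$: a homotopy $\tau$ formed from the \emph{leftmost} non-$g$ letter and the \emph{leading} run of $g$'s, which multiplies the left module $k[g]$ by $[g]^{-1}$ and so lowers $i$ by one. The crucial point is that $[g]^{-1}$ only makes sense on $k[g]$ in degrees $\ge 1$, so $\tau$ is defined precisely on the part of $C'_{\bullet,\bullet}$ with $i\ge 1$; there the mirror computation gives $d_{\on{r}}\tau+\tau d_{\on{r}}=0$ and $d_{\on{l}}\tau+\tau d_{\on{l}}=\phi'$ for a bigraded automorphism $\phi'$ commuting with $d_{\on{r}}$. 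Since $\tau$ anticommutes with $d_{\on{r}}$ it descends to the $d_{\on{r}}$-homology $E_1$, where it yields $d_{\on{l}}\bar\tau+\bar\tau d_{\on{l}}=\bar\phi'$ with $\bar\phi'$ an isomorphism; a $d_{\on{l}}$-cycle $z\in E_1^{i,x}$ with $i\ge 1$ is then the boundary $d_{\on{l}}(\bar\phi'^{-1}\bar\tau z)$, using that $\bar\tau z\in E_1^{i-1,x}$ exists for $i\ge 1$. This proves $(E_1^{\bullet,x},d_{\on{l}})$ is exact for $i\ge 1$, leaving $H_\bullet(D^{x})$ possibly nonzero only at $(0,x)$. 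The main obstacle is precisely the construction and verification of this left homotopy $\tau$: unlike $\sigma$ it cannot be defined on all of $C'_{\bullet,\bullet}$ because $k[g]$ is not invertible, and its failure at left-degree $0$---where $[g]^{-1}$ is unavailable---is the structural reason homology can persist only at the bottom of the left edge, namely the corner $(0,x)$.
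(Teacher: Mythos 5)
Your strategy is the transpose of the paper's, and the two directions are not interchangeable here because the complex is not left--right symmetric. The paper filters by the \emph{right} index: the associated graded columns $(C'_{\bullet,j},d_{\on{l}})$ are exact except at $i=0$ by \autoref{lemma:g-stable-k-complex}, so $C'_{\bullet,\bullet}$ reduces to the edge complex $J_\bullet=\ker(d_{\on{l}})|_{C'_{0,\bullet}}$ with differential $d_{\on{r}}$, whose exactness is then inherited for free from the exactness of the total complex already proven in \autoref{lemma:nullhomotopy}; $D^x$ is quasi-isomorphic to the brutal truncation $J_{\geq x}$, and no second homotopy is ever needed. Your first step also misreads \autoref{lemma:nullhomotopy}: the identities \eqref{equation:left-cancel} and \eqref{equation:right-cancel} are established there only on the associated graded of the filtration $F^\bullet_n$, i.e.\ modulo terms with fewer trailing $g$'s, because all terms of $d_{\on{l}}$ and $d_{\on{r}}$ that remove trailing $g$'s are discarded in that computation. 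They fail on the nose: for $v=g^i\otimes(h,g)\otimes g^j$ with $h\neq g$, the elements $\sigma d_{\on{l}}(v)$ and $d_{\on{l}}\sigma(v)$ are \emph{equal} (both are $\mp\, g^{i+1}\otimes(g^{-1}hgh^{-1}g,\,g^{-1}hg)\otimes g^{j-1}$, a term of lower $F$-filtration), so their sum is nonzero rather than zero; similarly $d_{\on{r}}\sigma+\sigma d_{\on{r}}$ is $\pm\id$ only up to filtration-lowering terms, so your $\phi$ is not a diagonal sign automorphism. This particular gap is repairable: $d_{\on{r}}\sigma+\sigma d_{\on{r}}$ preserves each row and each $F^t$ and is $\pm\id$ on the associated graded, hence is still an automorphism commuting with $d_{\on{r}}$, so your conclusion that the rows are $d_{\on{r}}$-exact, and hence your identification of $E_1$ with the edge complex on the column $j=x$, survives.

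The fatal gap is the second half. The homotopy $\tau$, which carries the entire weight of the proof, is never constructed, and the properties you assign to it ``by left--right symmetry'' are precisely the on-the-nose identities that fail for $\sigma$. There is no such symmetry to invoke: $d_{\on{l}}$ conjugates the prefix by the removed letter while $d_{\on{r}}$ conjugates the removed letter by the suffix, and the modules $k[g]$ and $k[g,g^{-1}]$ are genuinely different, so a mirror of \autoref{lemma:nullhomotopy} is a new computation of comparable length, and its identities should be expected to hold only modulo a filtration by \emph{leading} $g$'s. This is not cosmetic, because your descent step needs $d_{\on{r}}\tau+\tau d_{\on{r}}=0$ exactly: only then does $\tau$ preserve $\ker d_{\on{r}}$ and induce $\bar\tau$ on $E_1^{\bullet,x}$. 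If the anticommutation holds only up to lower-filtration correction terms, then $\tau$ does not act on the edge complex at all, and the concluding nullhomotopy argument does not exist as written. To complete your route you would have to write down $\tau$, control its error terms, and show they can be corrected on $\ker d_{\on{r}}$ --- at which point it is far simpler to filter in the other direction, where the needed exactness of the edge complex $J_\bullet$ is a consequence of results the paper has already proven.
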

\begin{proof}
	By \autoref{lemma:nullhomotopy}, $T_\bullet^n$ is exact, or equivalently
	$C'_{\bullet, \bullet}$ is exact.
	Define the complex $J_\bullet$ as follows: Let $J_{-i} := \ker(C_{0,-i}
	\to C_{1,-i})$.
	Define the differential $J_{-i} \to J_{-i+1}$ to be given by $d_{\on{r}}$ by
	viewing $J_{-i} \subset C_{0,-i}$.

	We claim $J_\bullet \simeq C'_{\bullet, \bullet}$. We use that $C'_{\bullet, \bullet}$ has a filtration whose $i$th term is
	$D^{-i}_{\bullet, \bullet}$. The associated graded of this filtration is
	given by the complex $C'_{\bullet, -i}$. (This corresponds to a sequence
		of vector spaces pointing diagonally up and left in
	\autoref{figure:two-sided}, starting with $k\{c^i\}$.)
	Since $C'_{\bullet, -i}$ is a truncated summand of the usual $\mathcal K$
	complex, it is exact except at $C'_{0,-i}$ by
	\autoref{lemma:g-stable-k-complex}. Moreover, the homology of
	$C'_{\bullet, -i}$ is
	precisely $J_{-i}$.
	This implies $J_\bullet \to C'_{\bullet, \bullet}$ is a
	quasi-isomorphism, since it is a map inducing a quasi-isomorphism
	on each associated graded part of the filtration $D^{-i}_{\bullet,
	\bullet}$ of $C'_{\bullet, \bullet}$.
	The same argument moreover shows that the subcomplex $J_{\geq x}$,
	consisting of $J_t$ for $t \geq x$ and $0$ for $t < x$,
	is quasi-isomorphic to the subcomplex
	$D^x_{\bullet, \bullet}$.

	Note that, $J_\bullet$ is exact because it is quasi-isomorphic to
	$C'_{\bullet, \bullet}$, which is an
	exact complex, by \autoref{lemma:nullhomotopy}.
	It follows that $J_{\geq x}$ has only a single nonzero homology group, which occurs in degree $x$. Hence, the
	same is true of $D^x_{\bullet, \bullet}$. Concretely, this means that
	$D^x_{\bullet, \bullet}$ is exact except possibly at position $(0,x)$.
\end{proof}

Combining what we have done so far, we deduce
\autoref{theorem:two-sided-g}.

\subsubsection{Proof of \autoref{theorem:two-sided-g}}
\label{subsection:proof-two-sided-theorem}

\begin{proof}
	
	Observe that the map $\Conf \to \Hur^{G,c}$ in \autoref{theorem:two-sided-g} is induced by functoriality of the Hurwitz space construction for $\Hur^{\langle g\rangle ,\{g\}} \to \Hur^{G,c}$.
	Applying this functoriality with \autoref{lemma:two-sided-k-complex-descrption} and
	\autoref{remark:z-summand}, we can identify $\mathcal K(k[g], B, k[g,
	g^{-1}])
	\to \mathcal K(k[g], A, k[g,g^{-1}])$ as quasi-isomorphic to the inclusion of a summand of 
	$\oplus_{z} C^z_{\bullet, \bullet}$, where the cokernel of this inclusion is
	isomorphic to a sum over $z$ of complexes isomorphic to shifts
	of $T_\bullet$.
	The result then follows from exactness of $T_\bullet^n$, proven in \autoref{lemma:nullhomotopy}. 
\end{proof}

\section{Computing the cohomology stabilized by a single monodromy}
\label{section:stabilized-by-one-element}

In this section, we compute the stable homology of Hurwitz space stabilized by a
single element $g \in c$, and killed by all other elements of $c$.
The main result of this section is \autoref{proposition:cocycle-form}, which has
a somewhat elaborate proof which we break into steps.
The aim of this proposition is to show we can show that a cocycle well into the stable range can be put into a
particularly simple form, and we accomplish by using our computation of the
homology of the two-sided $\mathcal K$-complex from the previous section.
We recognize this argument is a bit involved, so we run through the computation
for the stable first cohomology
in \autoref{example:h1-argument}.
It may be helpful to read this before going through the more general argument.

We fix $G, c, k$ as in \autoref{notation:k-complex}.
We now introduce the
Fox-Neuwirth/Fuks cell complex which is a cell complex computing the cohomology
of Hurwitz space.
Define $\fncell G {n} c i$ to be the free $k$-vector space spanned by tuples of
$n-i$ words in $c$ whose total length is $n$.
That is, a basis element of $\fncell G {n} c i$ is of the form 
\begin{align}
	\label{equation:w-form}
	w_1\otimes
\cdots \otimes
w_{n-i} 
\end{align}
where $w_j$ is a word of length $v_j$ such that $\sum_{j=1}^{n-i}
v_j = n$.
This chain complex has a differential
\begin{align*}
\delta(w_1 \otimes \cdots \otimes w_{n-i}) = \sum_{j=1}^{n-i-1}(-1)^{j+1}
(w_1 \otimes \cdots \otimes  w_{j-1}\otimes  \on{sh} (w_j,  w_{j+1})\otimes
w_{j+2} \otimes \cdots \otimes w_{n-i})
\end{align*}
where $\on{sh} (w_j,  w_{j+1})$ is the shuffle product defined explicitly as
follows:
Suppose $w = g_1 \cdots g_s$ and $ w' = h_1 \cdots h_t$.
Then 
$\on{sh}(w,w')$ is the sum of words of length $s + t$
\begin{align*}
\on{sh}(w,w') = \sum_{\sigma \in S_{s,t}} \sgn(\sigma) \on{sh}_\sigma(g_1, \ldots, g_s,
h_1, \ldots, h_t),
\end{align*}
with the notation above defined as follows:
We let $S_{s,t} \subset S_{s+t}$ denote the subset of permutations of $s + t$ elements
which preserves the relative order of
the first $s$ elements and preserves the relative order of the last $t$
elements. We let $\sgn(\sigma)$ denote the sign of $\sigma$ when viewed as an element
of $S_{s+t}$. 
Finally,
$\on{sh}_\sigma(g_1, \ldots, g_s,
h_1, \ldots, h_t),$ denotes the word of length $s+t$ whose $\sigma(i)$th letter is $h_{i-s}$ for $i
>s$ and is $\alpha_i^{-1} g_{i} \alpha_i$ for $i < s$, where $\alpha_i = h_1
\cdots h_v$, for $v$ is the largest positive integer satisfying $\sigma(s+v) <
\sigma(i)$.
By \cite[Theorem 3.3]{ellenbergTW:fox-neuwirth-fuks-cells}, we can identify the
cohomology of this cochain complex with the cohomology of the Hurwitz space
$\phurc G n c$,
using the isomorphism between the homology of the $1$-point
compactification of a space and the cohomology of that space.
We note that \cite[Theorem 3.3]{ellenbergTW:fox-neuwirth-fuks-cells} is stated
in the language of local systems on configuration space, and if $f: \phurc G n
c \to \conf_n$ denotes the finite covering space, we use
the isomorphism
$H^i(\phurc G n c, \mathbb Q) \simeq H^i(\conf_n, f_*
\mathbb Q)$.

\begin{remark}
	\label{remark:stabilization-description}
	Suppose we start with an element 
	$z \in \fncell G n c i$
	which we write as $\sum_{j=1}^f c_j w_1^j \otimes \cdots \otimes w_{n-i}^j$.
	Recall that we defined a map $[g]$ on homology in
	\autoref{notation:stable-homology}.
	It follows from the definition above that multiplication by the dual of
	the element $[g]$ corresponds to sending
	$z$ to $\sum_{j | w_{n-i}^j = g } c_j w_1^j \otimes \cdots \otimes
	w_{n-i-1}^j$.
	By abuse of notation, we also denote the map dual to $[g]$ by $[g]$, so
	that
	this dual $[g]$ is a map on cohomology.
	In other words, the costabilization by $[g]$ map on cohomology picks out all terms whose last
	word is the length $1$ word equal to $g$.
	Indeed, this is dual to the map on homology which sends a tensor of
	words (in the dual basis to that described above)
	to that same tensor with an additional $g$ tacked on at the end.
\end{remark}

The next proposition is the main result of this section.
It will enable us to run the inductive step and is really the crux of our argument.
The proof will be given later in
\autoref{subsubsection:proof-of-stabilize-by-one}.

\begin{proposition}
	\label{proposition:stabilize-by-one-element-trivial}
	With notation for the stable homology groups as in
	\autoref{definition:boundary-monodromy} and
	\autoref{notation:stable-homology},
	let $i \geq 0$ and
	consider an element $z \in \ker(\stableboundary i G c g
	\to\stable i {\id} \beta {\id})$ (where $\id$ denotes the trivial group/element).
Assume that
for any $j < i$ and any $g \in G$,
	$\stableboundary j G
	c g \to \stable j {\id} \beta {\id}$ is an isomorphism.
Assume that $z [g]^w[h] = 0$ for any $w \geq 0$ and $h \neq g$.
	Then $z = 0$.
\end{proposition}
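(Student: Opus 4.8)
Working over $\mathbb Q$, homology and cohomology are dual, so it is equivalent to prove the corresponding cohomological statement; I would represent the class dual to $z$ by a Fox--Neuwirth/Fuks cocycle $\tilde z \in \fncell G n c i$ for some $n$ well into the stable range, in the component of boundary monodromy $g$. Under the description of \autoref{remark:stabilization-description}, the operator $[h]$ strips a trailing length-one word equal to $h$, so the hypothesis that $z[g]^w[h]=0$ for all $w \geq 0$ and all $h \neq g$ translates into the statement that, after deleting any number of trailing single letters $g$, no cell of $\tilde z$ carries a trailing single letter $\neq g$ up to coboundary. The plan is to use this to put $\tilde z$ in a normal form supported on cells with a long pure tail $g^t$, and then to recognize the relevant subquotient complex as (a shift of) the two-sided $\mathcal K$-complex summand $C_{\bullet,\bullet}$ of \autoref{lemma:two-sided-k-complex-descrption}, in which the right module $k[g,g^{-1}]$ records the $g$-stabilization and the left module $k[g]$ records the boundary monodromy together with the vanishing conditions.

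The first step is to massage $\tilde z$ into this normal form. Here I would run the explicit chain homotopy $\sigma_\bullet$ of \autoref{lemma:nullhomotopy}, which witnesses the exactness of $T_\bullet$, i.e. of the subcomplex $C'_{\bullet,\bullet}$ spanned by tuples containing some letter $\neq g$, to cancel the offending contributions one block at a time. This is the technical heart of the ``simple form'' reduction referred to in the proof outline, and it is exactly where the hypothesis on the centralizer in \autoref{notation:k-complex} (so that $hgh^{-1}\neq g$ for $h \neq g$) is needed to make the homotopy identities close up.

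With $\tilde z$ in normal form, I would invoke \autoref{proposition:upper-triangle-exact}: the cone-shaped complex $D^x_{\bullet,\bullet}$ is exact except at its corner $(0,x)$. Translated back, this forces the class of $\tilde z$ to be concentrated at the corner, i.e. to come from the image of $B = C_*(\conf_n,k)$ in $A$. By \autoref{theorem:two-sided-g} this corner contribution is already computed by the configuration-space algebra $B$, so the class of $\tilde z$ is detected faithfully by the map dual to $\stableboundary i G c g \to \stable i {\id} \beta {\id}$. The inductive hypothesis, that $\stableboundary j G c g \to \stable j {\id} \beta {\id}$ is an isomorphism for all $j<i$ and all $g$, is what ensures that the lower filtration pieces --- cells with shorter $g$-tails, which sit in strictly lower cohomological degree and are governed by the truncated $\mathcal K$-complex of \autoref{lemma:g-stable-k-complex} --- contribute nothing beyond configuration space, so that the surviving homology is exactly the corner term and the comparison there is an isomorphism. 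Since $z$ lies in the kernel of the map to configuration-space (co)homology by hypothesis, this corner contribution vanishes, and therefore $z=0$.

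The step I expect to be the main obstacle is the dictionary underlying the first two paragraphs: matching the cohomological Fox--Neuwirth picture, in which $z$ lives and in which $[g]$ and $[h]$ act by stripping trailing letters, with the homological two-sided $\mathcal K$-complex, in which the exactness statements were proved, all while keeping careful track of the grading shifts, the signs in $d_{\on{r}}$ and $d_{\on{l}}$, and the passage to the stable limit $n \to \infty$. Concretely, the delicate point is to choose the filtration by the length of the trailing $g$-block so that it is simultaneously respected by the nullhomotopy $\sigma_\bullet$ and by the comparison with configuration space; once this bookkeeping is arranged, exactness of $D^x_{\bullet,\bullet}$ together with the two hypotheses on $z$ closes the argument formally.
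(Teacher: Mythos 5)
Your skeleton does match the paper's: represent the class dual to $z$ by a Fox--Neuwirth cocycle, translate the hypothesis $z[g]^w[h]=0$ into a normal form with pure $g$-tails (this is the paper's \autoref{lemma:ending-form}), and bring in the exactness results for the two-sided $\mathcal K$-complex. But the step you set aside as sign-and-grading ``bookkeeping'' is in fact the crux of the whole proof, and your plan for it --- ``run the chain homotopy $\sigma_\bullet$ of \autoref{lemma:nullhomotopy} directly on $\tilde z$ to cancel offending contributions'' --- does not typecheck. The homotopy $\sigma_\bullet$ is defined on $C'_{\bullet,\bullet}$, whose terms are spanned by tuples of single letters of $c$ with one-dimensional coefficient modules $k[g]$ and $k[g,g^{-1}]$; a Fox--Neuwirth cocycle is a sum of tensors of words of arbitrary lengths whose ``left factors'' are arbitrary cochains, not scalars. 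Before any subquotient of the Fox--Neuwirth complex can be identified with (the dual of) $D^x_{\bullet,\bullet}$, one must show that the relevant projections of $z$ are cocycles (\autoref{lemma:cocycle-modification}) whose left factors can be normalized to fixed representatives $f^*(w_n)$ pulled back from configuration space (\autoref{notation:conf-cohomology-generators}, \autoref{lemma:projection-representative}); this normalization is exactly where the inductive hypothesis that $\stableboundary{j}{G}{c}{g} \to \stable{j}{\id}{\beta}{\id}$ is an isomorphism for $j<i$ gets consumed, and the whole reduction requires an induction on the size $t$ of the last long block (\autoref{claim:vanishing}, \autoref{lemma:vanishing-projection}, \autoref{lemma:exact-triangle}), not a single application of the homotopy. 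Your attribution of the inductive hypothesis to ``lower filtration pieces with shorter $g$-tails'' misses this: the lower-degree cohomology enters as the coefficient factors of the extracted complexes, and without pinning those coefficients to canonical configuration-space representatives the comparison with $D^x_{\bullet,\bullet}$ cannot even be stated.

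Your endgame is also unjustified. Once the dictionary is in place, exactness of $D^x_{\bullet,\bullet}$ (\autoref{proposition:upper-triangle-exact}) yields only that $z$ is cohomologous to a cocycle of the form $y \otimes g$ (\autoref{proposition:cocycle-form}); it does not say the class ``comes from the image of $B$'' or is ``detected faithfully by configuration space.'' Your appeal to \autoref{theorem:two-sided-g} here is a category error: that theorem computes the homology of $\mathcal K(k[g],A,k[g,g^{-1}])$, not the stable cohomology of Hurwitz spaces, and the bridge between the two is precisely the missing dictionary (note the paper itself never invokes \autoref{theorem:two-sided-g} in the proof; its working form is \autoref{proposition:upper-triangle-exact}, fed through \autoref{lemma:exact-triangle}). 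To finish from $z \sim y \otimes g$ one still needs the paper's \autoref{lemma:one-more-g} and \autoref{lemma:iterated-more-g}: exactness of the one-sided $\mathcal K$-complex for $k[g,g^{-1}]$ (\autoref{lemma:g-stable-k-complex}) allows one to iterate, pushing $z$ into the span of the pure tuple $g \otimes \cdots \otimes g$, which vanishes in the cohomology of the \emph{connected} Hurwitz space because $g$ alone does not generate $G$. That iteration, not the hypothesis that $z$ maps to zero in configuration-space cohomology, is what actually kills the class; your proposal omits it entirely.
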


Before reading the rest of this section, we recommend the reader jump to
\autoref{example:h1-argument}, which runs through the special case of
\autoref{proposition:stabilize-by-one-element-trivial} where $i = 1$.

The next two lemmas show that in order to prove
\autoref{proposition:stabilize-by-one-element-trivial}, it is enough to show that $z$ is cohomologous to a cocycle in a form
ending in $g$; the argument for this reduction is given in
\autoref{subsubsection:proof-of-stabilize-by-one}.

\begin{lemma}
\label{lemma:one-more-g}
Suppose $x \in H^i(\cphurc G {n+1} c, k)$ is represented by a cocycle of the form
$y \otimes g$ for some $y \in \fncell G n c i$.
Then, $y$ is a cocycle and there is
$z \in \fncell G {n-1} c i$
and
$w \in \fncell G {n-1} c {i-1}$
so that
$y - z \otimes g = \delta(w \otimes g)$.
In particular, $y$ is cohomologous to $z \otimes g$ and additionally $z \in
\fncocycle G {n-1} c i$.
\end{lemma}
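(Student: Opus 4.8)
The plan is to reduce everything to the behaviour of the last word under the Fox--Neuwirth differential. First I would compute $\delta(y \otimes g)$ directly from the shuffle formula. Writing $m := n-i$ for the number of words in $y$, the terms of $\delta(y\otimes g)$ coming from shuffling among the first $m$ words reassemble into $\delta(y)\otimes g$, while the single term shuffling the last word of $y$ against $g$ produces $(-1)^{m+1}\Phi(y)$, where $\Phi$ denotes the operator that replaces the last word $u$ of a basis element by $\on{sh}(u,g)$ and leaves the prefix untouched. The key structural point is that $\delta(y)\otimes g$ has last word of length $1$ (namely $g$), whereas $\Phi(y)$ has last word of length $\geq 2$. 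Grading $\fncell G {\bullet} c {\bullet}$ by the length of its last word, these lie in different graded pieces, so $\delta(y\otimes g)=0$ forces $\delta(y)=0$ (that is, $y$ is a cocycle) and $\Phi(y)=0$ separately.

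Next I would record two facts about this last-word grading. The differential $\delta$ is lower triangular for it: the non-merging terms preserve the length of the last word while the one merging term strictly increases it, so $\{\text{last word of length} \geq p\}$ is a subcomplex. The operator $\Phi$ raises last-word length by exactly one and satisfies $\Phi^2=0$, reflecting $\on{sh}(g,g)=0$ together with associativity of the shuffle. Decomposing $y=\sum_\ell y_\ell$ by last-word length, $\Phi(y)=0$ gives $\Phi(y_\ell)=0$ for every $\ell$. For $\ell=1$ a direct computation gives $\on{sh}(h,g)=hg-g\,(g^{-1}hg)$, which vanishes exactly when $h=g$, and for distinct $h\neq g$ the resulting length-two words are linearly independent; hence $\Phi(y_1)=0$ forces every term of $y_1$ to end in $g$, i.e.\ $y_1=z'\otimes g$ for some $z'\in\fncell G {n-1} c i$.

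The crux is the tail $y_{\geq 2}:=\sum_{\ell\geq 2}y_\ell$. Here I would argue that the ``insert a $g$'' complex $\cdots\to k\{c^{\ell-1}\}\xrightarrow{\on{sh}(-,g)}k\{c^{\ell}\}\to\cdots$ is exact in degrees $\geq 2$. Granting this, each $y_\ell$ (which satisfies $\Phi(y_\ell)=0$) equals $\Phi$ of an element whose last word has length $\ell-1$; collecting these over $\ell\geq 2$ produces a single $w\in\fncell G {n-1} c {i-1}$ with $(-1)^{m+1}\Phi(w)=y_{\geq 2}$, where one checks the word-counts and total lengths match precisely. Setting $z:=z'-\delta(w)\in\fncell G {n-1} c i$, the computation $\delta(w\otimes g)=\delta(w)\otimes g+(-1)^{m+1}\Phi(w)$ then yields exactly $y-z\otimes g=\delta(w\otimes g)$. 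Finally $z\in\fncocycle G {n-1} c i$: since $z\otimes g=y-\delta(w\otimes g)$ is closed, the length-one part of $\delta(z\otimes g)$, which is $\delta(z)\otimes g$, must vanish, so $\delta(z)=0$.

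The main obstacle is the exactness of the insert-$g$ complex used for the tail. I expect to prove it by exhibiting an explicit contracting homotopy that deletes a $g$ and undoes the accompanying conjugation, in the same spirit as the nullhomotopies of \autoref{lemma:nullhomotopy} and \autoref{lemma:zero-and-iso-vanishes}; the conjugations appearing in $\on{sh}(-,g)$ are what make the bookkeeping delicate. Alternatively, since this complex is the linear dual of a summand of the $\mathcal K$-complex attached to $k[g,g^{-1}]$, its acyclicity should also follow from \autoref{lemma:g-stable-k-complex}, and I would use whichever identification is cleanest to state.
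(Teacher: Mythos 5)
Your proposal is correct and follows essentially the same route as the paper: both split the cocycle condition $\delta(y\otimes g)=0$ into $\delta(y)=0$ plus the vanishing of the shuffles of the last words against $g$, then invoke exactness of the insert-$g$ complex — obtained by dualizing $\mathcal K(k[g,g^{-1}])$ via \autoref{lemma:g-stable-k-complex} — to write each tail as $\on{sh}(r,g)$, and finally assemble $w$ from the prefixes tensored with these preimages and set $z$ accordingly. In particular, the "alternative" you mention for the key exactness (duality with \autoref{lemma:g-stable-k-complex} rather than a new explicit homotopy) is exactly what the paper does, so no new nullhomotopy is needed.
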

\begin{proof}
	Note that the final sentence follows immediately from the penultimate sentence.

	To prove the penultimate sentence,
we can write $y = \sum_{j=0}^i \sum_{\tau} s_j^\tau \otimes t_j^\tau$ for
$s_j^\tau \in \fncell G {n-j-1} c {i - j}$ ranging over a basis of this vector
space (as $\tau$ varies for $j$ fixed)
and
$t_j^\tau \in \fncell G {j+1} c {j}$ is a linear combination of words in $c$ of length $j$.

By assumption, $y \otimes g$ is a cocycle, and hence vanishes under the
shuffling coboundary
map $\delta$.
This immediately implies $y$ is a cocycle, since none of the terms associated to
shuffling together blocks in $y$ can cancel with those shuffling the rightmost
block in $y$ with $g$.
In particular, we claim this implies $t_j^\tau \otimes g \in \fncell
G {j+2} c j$ lies in the kernel of the coboundary map $\delta$
for each $j$ with $0 \leq j \leq i$ and each $\tau$. 
Indeed, this can be seen by expanding the image of $y \otimes g$ under
the coboundary map and noting that the sum of all terms whose component in
$\fncell G {n-j-1} c {i-j}$ equal to $s^\tau_j$ is precisely $\delta(t_j^\tau
\otimes g)$.
By \autoref{lemma:g-stable-k-complex},
we find that for each $j$, $t_j^\tau = \delta(r_j^\tau \otimes g)$, for some
$r_j^\tau \in \fncell G {j} c {j-1}$. Indeed this is because the
$\mathcal{K}$-complex, $\mathcal K(k, A,  k[g,g^{-1}])$ is exactly dual to the
complex whose $j^{th}$ cohomology group vanishing shows that there is no
obstruction to finding such an $r_j^{\tau}$.
Now, consider the element
$\sum_{j=0}^i \sum_\tau s_j^\tau \otimes r_j^\tau \otimes g \in \fncell G n c {i-1}$.
Applying the coboundary map $\delta$ to this, and using that $\delta(r_j^\tau
\otimes g) = t_j^\tau$, we obtain

\begin{align*}
\delta(\sum_{j=0}^i \sum_\tau s_j^\tau \otimes r_j^\tau \otimes g) &=
\sum_{j=0}^i \sum_\tau \delta(s_j^\tau \otimes r_j^\tau) \otimes g + (-1)^{n-i}
\sum_{j=0}^i \sum_\tau s_j^\tau \otimes
t_j^\tau \\
&=
\sum_{j=0}^i \sum_\tau \delta(s_j^\tau \otimes r_j^\tau) \otimes g + (-1)^{n-i}y.
\end{align*}

We conclude the statement by taking 
\begin{align*}
	z &:= (-1)^{n-i+1}\sum_{j=0}^i \sum_\tau
	\delta(s_j^\tau \otimes r_j^\tau), \\
w &:= (-1)^{n-i}\sum_{j=0}^i \sum_\tau s_j^\tau \otimes r_j^\tau.
\qedhere
\end{align*}
\end{proof}

\begin{lemma}
	\label{lemma:iterated-more-g}
	Suppose $n \geq 0, i \geq 0$ and 
$x \in H^i(\cphurc G {n} c, k)$ is represented by a cocycle of the form
$y_1 \otimes g$ for some $y_1 \in \fncell G {n-1} c i$.
Then $x$ is cohomologous to $0$.
\end{lemma}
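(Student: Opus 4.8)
The plan is to iterate \autoref{lemma:one-more-g}, peeling off one copy of $g$ at a time, and to terminate the process by a dimension count. The engine behind the iteration is the vanishing of the shuffle product $\sh(g,g) = 0$: the two permutations in $S_{1,1} = S_2$ carry opposite signs but, since $g$ commutes with itself, both produce the same length-two word $gg$, so their contributions cancel. Because the coboundary $\delta$ only shuffles \emph{adjacent} blocks, appending copies of $g$ on the right therefore commutes with $\delta$ on any chain that already ends in the length-one word $g$; concretely, $\delta(u \otimes g \otimes g^{\otimes k}) = \delta(u \otimes g) \otimes g^{\otimes k}$ for any chain $u$ and any $k \geq 0$, since every new cross-shuffle involving two adjacent $g$-blocks is $\sh(g,g)=0$.

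With this in hand, I would set $z_0 := y_1$ and build a sequence of cocycles $z_k \in \fncell G {n-1-k} c i$ as follows. Given that $z_k \otimes g$ is a cocycle ending in $g$, \autoref{lemma:one-more-g} applies and produces a cocycle $z_{k+1} \in \fncell G {n-2-k} c i$ together with some $w_{k+1}$ satisfying $z_k - z_{k+1} \otimes g = \delta(w_{k+1}\otimes g)$. Appending $g^{\otimes(k+1)}$ to this identity and invoking the commuting property above gives
\[
z_k \otimes g^{\otimes(k+1)} - z_{k+1}\otimes g^{\otimes(k+2)} = \delta(w_{k+1}\otimes g^{\otimes(k+2)}),
\]
so that $[z_k \otimes g^{\otimes(k+1)}] = [z_{k+1}\otimes g^{\otimes(k+2)}]$ in $H^i(\cphurc G n c, k)$. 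Starting from $[x] = [z_0 \otimes g]$, this yields $[x] = [z_k \otimes g^{\otimes(k+1)}]$ for every $k$ for which the iteration is defined; note that at each stage $z_k \otimes g^{\otimes(k+1)} \in \fncell G n c i$, so we stay in the cohomology of the $n$-point space throughout.

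To finish, the count $z_k \in \fncell G {n-1-k} c i$, a space spanned by tuples of $(n-1-k)-i$ words of total length $n-1-k$, shows this space vanishes once $k > n-1-i$ (the number of words becomes negative). For such $k$ we get $z_k = 0$, and therefore $[x] = [z_k \otimes g^{\otimes(k+1)}] = 0$, as desired.

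The main thing to verify carefully is the commuting property $\delta(u \otimes g \otimes g^{\otimes k}) = \delta(u\otimes g)\otimes g^{\otimes k}$, i.e.\ that no correction terms survive when appending $g$'s to a chain ending in $g$; this reduces entirely to $\sh(g,g) = 0$ and the fact that $\delta$ shuffles only adjacent blocks. The only other point needing attention is the cocycle hypothesis required to reapply \autoref{lemma:one-more-g} at step $k$: one needs $z_k \otimes g$ to be a cocycle. For $k = 0$ this is the given cocycle $y_1 \otimes g$, and for $k \geq 1$ it holds because $z_k \otimes g = z_{k-1} - \delta(w_k \otimes g)$ is the difference of the cocycle $z_{k-1}$ (produced by the previous application of the lemma) and a coboundary.
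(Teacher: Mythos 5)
Your proposal follows the same core strategy as the paper's proof: iterate \autoref{lemma:one-more-g}, using $\sh(g,g)=0$ to show that appending copies of $g$ commutes with $\delta$ on chains ending in $g$. That commuting property, which you isolate and verify correctly, is exactly the step $\delta(w_j \otimes g)\otimes g \otimes \cdots \otimes g = \delta(w_j \otimes g \otimes g \otimes \cdots \otimes g)$ in the paper, and your verification of the cocycle hypothesis at each stage is also the same as what the paper leaves implicit. The difference is in how the iteration terminates, and there your argument has a genuine gap in the case $i = 0$.

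For $i \geq 1$ your dimension count is fine: $\fncell G {n-1-k} c i$ vanishes once the number of words $(n-1-k)-i$ is nonpositive (zero words cannot carry positive total length), and every application of \autoref{lemma:one-more-g} before that point has an input with at least one word. But for $i = 0$ the space $\fncell G 0 c 0$ is \emph{not} zero: it is one-dimensional, spanned by the empty tuple. Your iteration therefore gets stuck with $x$ cohomologous to $z_{n-1} \otimes g^{\otimes n}$, where $z_{n-1} = \lambda\cdot(\text{empty tuple})$, and the one further application of \autoref{lemma:one-more-g} you need to reach $k = n$ is applied to an input for which that lemma fails. Concretely, the single length-one word $g \in \fncell G 1 c 0$ is a cocycle of the form $(\text{empty})\otimes g$, yet it cannot be written as $z \otimes g + \delta(w \otimes g)$ with $z \in \fncell G {-1} c 0$ and $w \in \fncell G {-1} c {-1}$, since both of those spaces are zero; the proof of \autoref{lemma:one-more-g} breaks down here because it decomposes $y$ according to its last word, which the empty tuple does not have. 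The paper closes this case with an extra input that your proof never uses: the hypothesis that $x$ is a class on the \emph{connected} Hurwitz space $\cphurc G n c$. Since the shuffle differential preserves the subgroup generated by the entries, the cochain $g^{\otimes n}$ lives in the summand of the Fox--Neuwirth complex whose tuples generate only $\langle g \rangle$ and hence parameterize disconnected covers, so being cohomologous to a multiple of $g^{\otimes n}$ forces $x = 0$ in the summand computing $H^i(\cphurc G n c, k)$. Some such connectivity argument (or, equivalently, tracking that the whole iteration stays in the subcomplex $\cfncell G \bullet c \bullet$ of \autoref{notation:cells}, in which the empty tuple is excluded because $g$ alone does not generate $G$) is genuinely needed to dispose of $i = 0$; with that addition your argument is complete.
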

\begin{proof}
Applying \autoref{lemma:one-more-g} yields 
\begin{align*}
	y_1 \otimes g = \delta(w_1
\otimes g) \otimes g + y_2 \otimes g \otimes g = \delta(w_1 \otimes g \otimes g)
+ y_2 \otimes g \otimes g.
\end{align*}
Hence, $y_1 \otimes g$ is cohomologous $y_2 \otimes g \otimes g$, where, $y_2 \in \cfncell G
{n-2} c i$.
Applying \autoref{lemma:one-more-g} iteratively, we inductively find 
that for any $1 \leq j \leq n$, 
there is $y_j \in \cfncell G {n-j} c i$ for which
\begin{align*}
	y_{j-1} \otimes \underbrace{g \otimes \cdots \otimes g}_{j-1 \text{ times }}
	&= \delta(w_j \otimes g) \otimes \underbrace{g \otimes \cdots \otimes
	g}_{j-1 \text{ times }}
	+ y_{j} \otimes \underbrace{g \otimes \cdots \otimes g}_{j \text{ times
	}} \\
	&=
\delta(w_j \otimes g \otimes \underbrace{g \otimes \cdots \otimes
g}_{j-1 \text{ times }})
	+ y_{j} \otimes \underbrace{g \otimes \cdots \otimes g}_{j \text{ times
	}}.
\end{align*}
Hence, 
$y_j \otimes \underbrace{g \otimes \cdots \otimes g}_{j \text{ times }}$
is cohomologous to $x$. When $j = n$, we find that 
$x$
is cohomologous to a multiple of $\underbrace{g \otimes g \otimes \cdots \otimes
g}_{n \text{
times}}$, and hence cohomologous to $0$ because $x \in H^i(\cphurc G {n} c,
k)$.
\end{proof}

As mentioned prior to \autoref{lemma:one-more-g}
our next goal will be to show that $z$ is cohomologous to a cocycle in a form
ending in $g$.
We now begin preparations to accomplish this in
\autoref{proposition:cocycle-form}.

\begin{notation}
\label{notation:cells}
We will use the notation 
$\cfncell G n c i$ for the subspace of $\fncell G n c i$ spanned by those basis
elements 
such that the union of $g$ with those elements of $c$ appearing in that basis
element generate all of $G$.
More precisely, $\cfncell G n c i$ is generated by tensors of words of the form
$w_1\otimes \cdots \otimes w_{n-i}$
with $w_j = g_{j,1} \cdots g_{j,v_j}$ and the $g_{s,t}$ for $1 \leq s \leq n-i$
and $1 \leq t \leq v_s$ all together with $g$ generate $G$.
Note that $\cfncell G n c \bullet$ forms a subcomplex of the chain complex
$\fncell G n c \bullet$.
We will use $\fncocycle G n c i$ for the subspace of $\fncell G n c i$ consisting
of cocycles, $\fncoboundary G n c i$ for the subspace of $\fncell G n c i$
consisting of coboundaries, and $\fncohomology Gn c i := \fncocycle Gn c i/
\fncoboundary G n c i.$
\end{notation}

\begin{notation}
\label{notation:z}
Suppose we are in the situation of
\autoref{proposition:stabilize-by-one-element-trivial}, 
In particular, 
$z [g]^j[h] = 0$ for any $j \geq 0$ and $h \neq g$.
Choose some sufficiently large $n$ so
that we may represent $x$ by a class $z \in \cfncell G {n} c i$.
(How large we have to take $n$ will be determined in the proof of
\autoref{proposition:cocycle-form}.)
\end{notation}

As previously mentioned, our aim will be to prove \autoref{proposition:stabilize-by-one-element-trivial},
which amounts to showing $z =0$, after modification by a coboundary.
The next lemma translates the hypothesis that each $[g]^j[h]$, for $h \in c -
g$ and $j \geq 0$, kills $z$ to a
concrete description of the form of $z$.

\begin{lemma}
\label{lemma:ending-form}
	With notation as in \autoref{notation:cells} and \autoref{notation:z},
	for any fixed $m$, and every $s < m$, any cocycle $z \in \fncell G n c i$ is
	cohomologous to an element whose projection onto
	$\fncell G {n-s} c i \otimes \underbrace{k\{c\} \otimes \cdots
	\otimes k\{c\}}_{s \text{ times}}$ is of the form
	$z_s \otimes \underbrace{g \otimes \cdots \otimes g}_{s \text{ times}}$.
\end{lemma}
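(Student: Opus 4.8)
The plan is to induct on $s$, constructing a single cocycle cohomologous to $z$ that is simultaneously clean in all tails of length $s < m$. Throughout I use that the costabilization operations $[g]$ and $[h]$ of \autoref{remark:stabilization-description} are genuine cochain maps on the Fox--Neuwirth/Fuks complex: applied to $w_1 \otimes \cdots \otimes w_{n-i}$ they delete the last block when it is the length-one word $g$ (resp.\ $h$) and return $0$ otherwise. That this commutes with $\delta$ is immediate, since the only shuffle term touching the final block is $\on{sh}(w_{n-i-1}, w_{n-i})$, which has length $\geq 2$ and is therefore annihilated. Decompose $\fncell G n c i$ as the projection summand $\fncell G {n-s} c i \otimes k\{c\}^{\otimes s}$ (tensors whose last $s$ blocks are single letters) together with its complement. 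The inductive hypothesis will be that I have a cocycle $z^{(s)}$ cohomologous to $z$ whose projection onto $\fncell G {n-s'} c i \otimes k\{c\}^{\otimes s'}$ has the form $z_{s'} \otimes g^{\otimes s'}$ for every $s' \leq s$; the base case $s = 0$ is $z^{(0)} = z$.

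For the inductive step, write the projection of $z^{(s)}$ onto $\fncell G {n-s-1} c i \otimes k\{c\}^{\otimes (s+1)}$ as $\sum_{h \in c} z_{s,h} \otimes h \otimes g^{\otimes s}$ with $z_{s,h} \in \fncell G {n-s-1} c i$; the tail is already $g^{\otimes s}$ by hypothesis, so only the $(s+1)$st block from the end must be cleaned. Iterating the hypothesis gives $z^{(s)}[g]^s = z_s$ and hence $z^{(s)}[g]^s[h] = z_{s,h}$ on the nose. Because $[g]$ and $[h]$ are cochain maps, $z_{s,h}$ is a cocycle representing the class $x [g]^s [h]$, which vanishes for $h \neq g$ by the standing hypothesis of \autoref{notation:z}. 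Thus $z_{s,h} = \delta(u_h)$ for some $u_h \in \fncell G {n-s-1} c {i-1}$, for each $h \neq g$.

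Now correct $z^{(s)}$ by a coboundary, setting $z^{(s+1)} := z^{(s)} \mp \delta(\eta)$ where $\eta := \sum_{h \neq g} u_h \otimes h \otimes g^{\otimes s} \in \fncell G n c {i-1}$. Expanding $\delta(\eta)$ via the shuffle differential, the terms shuffling two blocks inside $u_h$ produce exactly $\pm \sum_{h \neq g} z_{s,h} \otimes h \otimes g^{\otimes s}$, which (with the sign chosen) cancels the unwanted $h \neq g$ part of the $(s+1)$-projection; every remaining shuffle term merges $h$ (or some $g$) with a neighbour, creating a block of length $\geq 2$ among the last $s+1$ positions, and so lands in the complement of $\fncell G {n-s-1} c i \otimes k\{c\}^{\otimes (s+1)}$. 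This establishes the $(s+1)$st cleaning. I must then check the correction does not disturb the lower tails: for $s' \leq s$ every term of $\delta(\eta)$ either has its last $s'$ blocks equal to the single letter $g$ (any newly created long block sitting in position $\geq s'+1$ from the end), or has a long block inside the last $s'$ positions and so drops out of that projection; hence the projection onto $\fncell G {n-s'} c i \otimes k\{c\}^{\otimes s'}$ remains of the form $(\cdot) \otimes g^{\otimes s'}$, merely with $z_{s'}$ altered. Since the lemma only asserts the form of the tail, this suffices, and the induction proceeds to $s = m-1$ (using $n$ large relative to $m$ and $i$, as arranged in \autoref{notation:z}).

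The main obstacle is the bookkeeping in the third paragraph: verifying that the single coboundary correction $\delta(\eta)$ simultaneously (i) cancels precisely the $h \neq g$ terms in slot $s+1$, (ii) introduces nothing new into that slot, and (iii) leaves the already-cleaned tails of length $s' \leq s$ in the required form. All three hinge on the elementary but fiddly observation that the shuffle product of any block with a single letter has length $\geq 2$, so the only shuffle contribution staying within a given ``all single letters'' projection summand is the one internal to $u_h$; the remainder is routine sign-tracking.
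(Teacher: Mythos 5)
Your proof is correct and takes essentially the same route as the paper's: both induct on $s$, use the hypothesis of \autoref{notation:z} to write the cocycle $z[g]^s[h] = \delta(u_h)$ for each $h \neq g$, and then correct $z$ by the coboundary $\delta\bigl(\sum_{h \neq g} u_h \otimes h \otimes \underbrace{g \otimes \cdots \otimes g}_{s \text{ times}}\bigr)$ to clean the next slot. Your write-up simply makes explicit two points the paper leaves implicit, namely that $[g]$ and $[h]$ commute with $\delta$ and that the correction terms do not disturb the already-cleaned shorter tails.
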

\begin{proof}
	We first claim that, after modifying $z$ by a coboundary, we may assume 
	$z[g]^s[h]$ for any fixed value of $s$.
	To see this, note that for any $h \neq g$, we know $z[g]^s[h]$ is a coboundary by \autoref{notation:z}. 
	If $z[g]^s[h] = \delta(x)$, we then find that $z - \delta(x \otimes h \otimes
	\underbrace{g
\otimes \cdots \otimes g}_{\text{ $s$ times}}) = 0$.
Therefore, we may modify $z$ by a coboundary to assume that $z[g]^s[h]= 0$ for
all $h \neq g$ and any fixed value of $s$.

	Applying the above with $s = 1$ shows the lemma statement holds for $s = 1$.
	By induction on $s$, assuming this holds for $s - 1$ we may assume the
	projection onto 
	$\fncell G {n-(s-1)} c i \otimes \underbrace{k\{c\} \otimes \cdots
	\otimes k\{c\}}_{s-1 \text{ times}}$ is of the form
	$z_{s-1} \otimes \underbrace{g \otimes \cdots \otimes g}_{s \text{ times}}$.
	Then, again modifying $z$ by a coboundary, we may assume $[g]^s[h]$ acts by $0$ on $z$. We find $z_{s-1}$
	has projection onto $\fncell G {n-s} c i \otimes k\{c\}$ with
	$k\{c\}$ term in the span of $g$. Hence, the projection onto
	$\fncell G {n-s} c i \otimes \underbrace{k\{c\} \otimes \cdots
	\otimes k\{c\}}_{s \text{ times}}$ is of the form
	$z_s \otimes \underbrace{g \otimes \cdots \otimes g}_{s \text{ times}}$.
\end{proof}

We next wish to show that $z$ is cohomologous to a cochain in a form to
which we can apply \autoref{lemma:ending-form}.
The next lemma puts a serious constraint on what $z$ can look like.

\begin{lemma}
\label{claim:vanishing}
Keeping notation as in
\autoref{notation:cells} and \autoref{notation:z},
for $2 \leq j\leq t \leq i+1$, 
$z$ is cohomologous to a cocycle whose projection to
\begin{equation}
\label{equation:end-form}
\oplus_{\alpha=0}^{i+1-j} \fncell G {n-j-\alpha} c {i-j+1} \otimes k\{c^{j} \} \otimes
\underbrace{k\{c\}
\cdots \otimes k\{c\}}_{\alpha \text{ times}}
\end{equation}
is zero. 

Moreover, the projection onto 
$\fncell G {n-s} c i \otimes \underbrace{k\{c\} \otimes \cdots
\otimes k\{c\}}_{s \text{ times}}$ is of the form
$z_s \otimes \underbrace{g \otimes \cdots \otimes g}_{s \text{ times}}$
for $s \leq i+2$.
\end{lemma}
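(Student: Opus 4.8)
The plan is to prove the two assertions together by induction on $t$, normalizing $z$ so as to strip trailing single-letter $g$-blocks and absorbing the remaining length-$\geq 2$ penultimate blocks into coboundaries. I would dispose of the moreover statement first: applying \autoref{lemma:ending-form} with the fixed parameter taken to be $i+3$ replaces $z$ by a single cohomologous cocycle whose projection onto $\fncell G {n-s} c i \otimes \underbrace{k\{c\} \otimes \cdots \otimes k\{c\}}_{s \text{ times}}$ has the form $z_s \otimes \underbrace{g \otimes \cdots \otimes g}_{s \text{ times}}$ for all $s \leq i+2$ simultaneously. I fix this trailing-$g$ normal form at the outset and arrange every later modification to preserve it.

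For the vanishing of the projection to \eqref{equation:end-form}, I would induct on $t$, the inductive hypothesis being that $z$ has been replaced by a cohomologous cocycle, still in trailing-$g$ normal form, whose projection to \eqref{equation:end-form} already vanishes for all $2 \leq j \leq t-1$. At stage $t$, I consider the projection $P$ of $z$ onto the $j=t$ summand $\bigoplus_{\alpha} \fncell G {n-t-\alpha} c {i-t+1} \otimes k\{c^t\} \otimes k\{c\}^{\otimes \alpha}$. The key observation is that the cocycle equation $\delta z = 0$, combined with the fact that everything to the right of the distinguished length-$t$ block is a string of single $g$'s, constrains $P$: the shuffles merging the length-$t$ block leftward into the head contribute the leftward differential and the shuffles absorbing it into the trailing $g$'s contribute the rightward differential, so that $P$, viewed as $\alpha$ varies, is a cycle for the two differentials of the two-sided $\mathcal K$-complex of \autoref{figure:two-sided}. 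This is the exact analogue of the step in \autoref{lemma:one-more-g} where the cocycle condition forced a length-$j$ block to be a cycle in a $\mathcal K$-complex, now with a second grading recorded by the number of trailing $g$'s.

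I would then apply \autoref{proposition:upper-triangle-exact}. After matching gradings, the data carrying $P$ is (dual to) the cone-shaped subcomplex $D^x_{\bullet,\bullet}$ with the distinguished block of length $t$ sitting at the base and the trailing $g$'s indexing the vertical grading; this subcomplex is exact except at the single corner $(0,x)$. Because $P$ lands in the range $j = t \geq 2$, it sits in a degree strictly away from that corner, so it has no homology obstruction and is a coboundary in $D^x_{\bullet,\bullet}$. Unwinding this produces an explicit cochain $w$ with $\delta w$ equal to $P$ in the $j=t$ summand, and replacing $z$ by $z - \delta w$ annihilates the $j = t$ projection. Finally I would re-apply \autoref{lemma:ending-form} to reinstate the trailing-$g$ normal form and check that $\delta w$, which only shuffles the distinguished block, does not revive the already-vanishing summands with $j < t$.

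The step I expect to be the main obstacle is the identification in the third paragraph: pinning down the duality dictionary between the projected Fox--Neuwirth/Fuks cochain data and the two-sided $\mathcal K$-complex $D^x_{\bullet,\bullet}$ so that \autoref{proposition:upper-triangle-exact} applies with the correct signs, the correct two gradings, and the correct direction of the differentials, and verifying that the position of $P$ genuinely avoids the corner $(0,x)$. Once this dictionary is fixed, the remaining bookkeeping — that successive coboundary corrections and the re-normalizations via \autoref{lemma:ending-form} do not interfere across the values of $t$ — should be routine.
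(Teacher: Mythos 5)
Your proposal has the right skeleton---normalize trailing $g$'s via \autoref{lemma:ending-form}, induct on $t$, and kill the $j=t$ projection using exactness of a two-sided $\mathcal K$-complex---but the step you flag as "the main obstacle" is not bookkeeping; it is a genuine gap, and it is exactly where the paper does most of its work. The data carrying your projection $P$ lives in $\bigoplus_\alpha \fncell G {n-t-\alpha} c {i-t+1} \otimes k\{c^t\} \otimes k\{g\}^{\otimes \alpha}$, which has a large ``head'' factor $\fncell G {n-t-\alpha} c {i-t+1}$ with its own internal differential; the complex $D^x_{\bullet,\bullet}$ of \autoref{proposition:upper-triangle-exact} has no such factor. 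There is no direct duality between the two. To bridge them, the paper must first (i) modify $z$ so that the head components are cocycles (\autoref{lemma:cocycle-modification}), then (ii) use the inductive hypothesis of \autoref{notation:z} --- that the stable cohomology in degrees $<i$ is pulled back from configuration space --- to replace those cocycles by the canonical representatives $f^*(w_{n-j-\alpha})$ (\autoref{lemma:projection-representative}), so that the projection genuinely lives in a complex of \emph{cohomology} groups; and then (iii) prove exactness of that complex (\autoref{lemma:exact-triangle}) by a d\'evissage: the subcomplex where some $h \neq g$ acts invertibly on the head is killed by the nullhomotopy of \autoref{lemma:zero-and-iso-vanishes}, and only the one-dimensional quotient (head monodromy all $g$) is identified with the dual of $D^{-t-1}_{\bullet,\bullet}$. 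Your proposal never invokes the induction on $i$ nor \autoref{lemma:zero-and-iso-vanishes}, and \autoref{proposition:upper-triangle-exact} alone cannot handle the head factor.

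A second, related failure: your claim that $\delta z = 0$ plus the trailing-$g$ form forces $P$ to be a cycle for the two-sided differentials is not a formal consequence of the cocycle equation at the cell level. The coboundary of $P$ in the $k\{c^{t+1}\}$-block direction can cancel against the coboundary of the term \eqref{equation:longer-end} (head in degree $i-t$ tensored with $k\{c^{t+1}\}$), so no vanishing follows. The paper's \autoref{lemma:vanishing-projection} rules out this cancellation precisely because, after step (ii) above, the $P$-components are multiples of the representatives $f^*(w_{n-j-\alpha})$, which are cocycles but never nonzero coboundaries, while the contribution of \eqref{equation:longer-end} lands in coboundaries; hence the two contributions must vanish separately. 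Without the reduction to those representatives, this separation argument---and with it your "$P$ is a cycle, hence a coboundary in $D^x$" step---is unavailable.
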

We will prove \autoref{claim:vanishing} later in \autoref{subsubsection:proof-vanishing} after
establishing some preliminary lemmas.
Before proving this, let us see why this implies the following proposition,
which lets us write $z$ with a $g$ on the right.

\begin{proposition}
\label{proposition:cocycle-form}
With notation as in \autoref{notation:cells} and \autoref{notation:z},
$z$ is cohomologous to a cocycle of the form $y \otimes g$.
\end{proposition}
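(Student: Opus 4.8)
The plan is to deduce this proposition directly from \autoref{claim:vanishing}, which we are allowed to assume; the only additional ingredient is an elementary bound on the possible lengths of the last word appearing in a basis element of the Fox--Neuwirth/Fuks complex. First I would replace $z$ by the cohomologous cocycle produced by \autoref{claim:vanishing} (taking $t = i+1$ to capture the full range of $j$), so that from now on $z$ enjoys both the vanishing of its projection onto \eqref{equation:end-form} for every $2 \leq j \leq i+1$ and the ``moreover'' conclusion that its projection onto $\fncell G {n-s} c i \otimes k\{c\}^{\otimes s}$ has the form $z_s \otimes \underbrace{g \otimes \cdots \otimes g}_{s \text{ times}}$ for $s \leq i+2$.

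The key combinatorial observation is the following. A basis element of $\fncell G n c i$ is a tensor $w_1 \otimes \cdots \otimes w_{n-i}$ of $n-i$ words of total length $n$; since each word has length at least $1$, the total excess $\sum_{j} (|w_j| - 1) = n - (n-i) = i$ is distributed among the blocks, so in particular the final word $w_{n-i}$ has length at most $i+1$. Thus every basis element has last word of some length $\ell$ with $1 \leq \ell \leq i+1$. I would then sort the terms of $z$ by the length $\ell$ of their last word and show that only $\ell = 1$ survives: taking $\alpha = 0$ in \eqref{equation:end-form}, the vanishing statement of \autoref{claim:vanishing} says precisely that the projection of $z$ onto $\fncell G {n-j} c {i-j+1} \otimes k\{c^{j}\}$ is zero, i.e.\ $z$ has no terms whose last word has length exactly $j$, for every $2 \leq j \leq i+1$. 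Because the last word length is bounded by $i+1$, this eliminates \emph{all} terms whose last word has length $\ell \geq 2$.

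It remains to treat the terms whose last word has length exactly $1$. For these, the $s = 1$ case of the ``moreover'' clause of \autoref{claim:vanishing} identifies the projection of $z$ onto $\fncell G {n-1} c i \otimes k\{c\}$ with $z_1 \otimes g$, so every such term ends in the length-one word $g$. Combining the two cases, every term of $z$ ends in $g$, whence $z = y \otimes g$ for some $y \in \fncell G {n-1} c i$, as desired. The substance of the whole argument is contained in \autoref{claim:vanishing}, whose proof is deferred to \autoref{subsubsection:proof-vanishing}; the hard part therefore lies not here but in establishing that vanishing. Given it, the present deduction is just the length bound $\ell \leq i+1$ together with the bookkeeping of which projections are controlled.
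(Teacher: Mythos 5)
Your proposal is correct and follows essentially the same route as the paper's own proof: apply \autoref{claim:vanishing} with $t = i+1$, use the vanishing of the projections onto $\fncell G {n-j} c {i-j+1} \otimes k\{c^j\}$ for $2 \leq j \leq i+1$ to conclude $z \in \fncell G {n-1} c i \otimes k\{c\}$, and then invoke the $s=1$ case of the ``moreover'' clause to get $z = y \otimes g$. The only difference is that you spell out explicitly the combinatorial fact (last word length at most $i+1$, since the total excess of word lengths is $i$) that the paper leaves implicit in the step ``This implies that $z \in \fncell G {n-1} c i \otimes k\{c\}$.''
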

\begin{proof}
For $t = i + 1$, after modifying $z$ by a
coboundary, we may assume it satisfies the conclusion of
\autoref{claim:vanishing}.
We find $z$ has zero projection
to
$\fncell G {n-j} c {i-j+1} \otimes k\{c^j \}$ for all $2 \leq j \leq i + 1$.
This implies
that 
$z \in \fncell G {n-1} c i \otimes k\{c\}$. The second condition 
of \autoref{claim:vanishing}
with $s =
1$ implies
that $z = y \otimes g$ for some $y \in \fncell G {n-1} c i$, as we wished to show.
\end{proof}

Summarizing, what we have accomplished so far, in order to prove
\autoref{proposition:stabilize-by-one-element-trivial}, it remains to prove
\autoref{claim:vanishing};
to see exactly why this suffices, one may examine the proof of 
\autoref{proposition:stabilize-by-one-element-trivial},
given in \autoref{subsubsection:proof-of-stabilize-by-one}.
This is probably the most
involved proof in the paper, and so we will require a number of sublemmas.
We first show certain projections of $z$ have parts which are cocycles.

\begin{lemma}
	\label{lemma:cocycle-modification}
	Using notation as in \autoref{claim:vanishing}, suppose
	\autoref{claim:vanishing} holds for $t-1$. (This is vacuous if $t=2$.)
	Then, for $2 \leq j \leq t$, we may modify $z$ by a coboundary so that the conclusion of \autoref{claim:vanishing} still holds for $t-1$ and the projection
	of $z$
	onto
\begin{equation}
\label{equation:end-form-augmented}
\oplus_{\alpha=0}^{i+2-j} \fncell G {n-j-\alpha} c {i-j+1} \otimes k\{c^{j} \} \otimes
\underbrace{k\{c\}
\cdots \otimes k\{c\}}_{\alpha \text{ times}}
\end{equation}
	in fact lies in
\begin{equation}
\label{equation:cycle-end-form}
\oplus_{\alpha=0}^{i+2-j} \fncocycle G {n-j-\alpha} c {i-j+1} \otimes k\{c^{j} \} \otimes
\underbrace{k\{c\}
\cdots \otimes k\{c\}}_{\alpha \text{ times}}.
\end{equation}
\end{lemma}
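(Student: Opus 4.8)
The plan is to read the desired cocycle condition on the left-hand tensor factor straight off the equation $\delta z = 0$, and then to remove the finitely many ``boundary'' terms that obstruct it by subtracting an explicit coboundary chosen so as not to disturb \autoref{claim:vanishing} at $t-1$. Concretely, for fixed $j$ with $2 \le j \le t$, I would project the identity $\delta z = 0$ onto
\[
\bigoplus_{\alpha} \fncell G {n-j-\alpha} c {i-j+2} \otimes k\{c^{j}\} \otimes
\underbrace{k\{c\} \otimes \cdots \otimes k\{c\}}_{\alpha \text{ times}},
\]
that is, the analogue of \eqref{equation:end-form-augmented} with the first factor raised by one cohomological degree. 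Writing the projection of $z$ onto \eqref{equation:end-form-augmented} as $\sum_\alpha \sum_\beta u^\alpha_\beta \otimes e^\alpha_\beta$, with $e^\alpha_\beta$ running over a basis of $k\{c^j\} \otimes k\{c\}^{\otimes \alpha}$ and $u^\alpha_\beta \in \fncell G {n-j-\alpha} c {i-j+1}$, the goal is exactly to show the $u^\alpha_\beta$ can be taken to lie in $\fncocycle G {n-j-\alpha} c {i-j+1}$, which is the content of \eqref{equation:cycle-end-form}.

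Next I would sort the shuffle terms of $\delta z$ that land in this target. Shuffles internal to the first factor contribute precisely $\sum \delta(u^\alpha_\beta)\otimes e^\alpha_\beta$; shuffles merging the last block of the first factor with the distinguished length-$j$ word, or merging that word with an adjacent trailing generator, produce a distinguished word of length strictly greater than $j$ and hence leave the target. The only remaining ``interference'' comes from basis elements of $z$ in which the length-$j$ distinguished word arises as $\on{sh}$ of two shorter adjacent words of lengths $a+b=j$. I would argue these vanish: any such split whose resulting distinguished word has length between $2$ and $t-1$ (with its trailing index still in range) is killed by the inductive hypothesis that \autoref{claim:vanishing} holds for $t-1$, since the relevant projection of $z$ onto \eqref{equation:end-form} with $j'=a$ or $j'=b$ is zero; and the degenerate split $a=b=1$, which occurs only for $j=2$ where the hypothesis is vacuous, forces two adjacent trailing generators, which by the ``Moreover'' clause (equivalently \autoref{lemma:ending-form}) are both $g$, so $\on{sh}(g,g)=0$. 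This yields $\delta(u^\alpha_\beta)=0$ for all $\alpha$ strictly below the top of its range.

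The hard part will be the single boundary value $\alpha = i+2-j$: there a split of type $(j-1,1)$ lands in a stratum whose trailing length is $\alpha+1 = i+3-j$, one beyond the range of $\alpha'$ controlled by \autoref{claim:vanishing} at $t-1$, so the inductive hypothesis does not apply and the term need not vanish. To remove it, I would use that by the ``Moreover'' clause the offending components of $z$ end in a run of $g$'s, so the residual obstruction $\delta(u^{i+2-j}_\beta)$ is represented by a cocycle ending in $g$; by the exactness established in \autoref{lemma:g-stable-k-complex}, applied exactly as in the proof of \autoref{lemma:one-more-g}, it can be written as $\delta(w)$ with $w$ itself ending in $g$, and subtracting $\delta(w)$ from $z$ turns the top first factor into an honest cocycle. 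Because $w$ is supported on strata carrying the full length-$j$ tail — its nonzero projections all end in the distinguished length-$j$ word followed by generators $g$ — subtracting $\delta(w)$ creates no projection onto \eqref{equation:end-form} for $2 \le j' \le t-1$, so \autoref{claim:vanishing} at $t-1$ survives the modification. Verifying this compatibility, together with the precise bookkeeping of the range of $\alpha$ at the boundary, is the most delicate point of the argument.
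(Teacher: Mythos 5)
Your proposal is, in its main body, the paper's own proof: you project $\delta z = 0$ onto the summands ending in a length-$j$ word followed by $\alpha$ singletons, observe that internal shuffles of the first factor contribute exactly $\sum_\beta \delta(u^\alpha_\beta)\otimes e^\alpha_\beta$ while shuffles that enlarge the distinguished word leave the target, and kill the interfering splits $(a,b)$, $a+b=j$, by \autoref{claim:vanishing} at $t-1$, with the degenerate split $a=b=1$ (which occurs only for $j=2$) handled by the ``Moreover'' clause together with $\on{sh}(g,g)=0$. All of this, including the range check $i+1-(j-v)\geq i+2-j$ for splits with $j-v\geq 2$, coincides with the paper's argument.

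The divergence is the boundary case you flag, and here your instinct is good: for $j\geq 3$ the split $(j-1,1)$ at $\alpha=i+2-j$ lands in the summand with a length-$(j-1)$ word followed by $i+3-j$ trailing singletons, whose trailing index is one beyond the range of \eqref{equation:end-form} at level $t-1$, so the inductive hypothesis really does not apply; the paper's phrase ``with $j$ replaced by $j-v$'' silently skips this case (it only makes sense when $j-v\geq 2$), so you are if anything more careful than the source here. But your repair has a genuine gap. The obstruction to $\delta(u^{i+2-j}_\beta)=0$ is a signed combination of the first factors $y_\gamma$ of the projection of $z$ onto that offending summand, weighted by coefficients of $\on{sh}(w'_\gamma,g)$; this is not ``a cocycle ending in $g$,'' and \autoref{lemma:g-stable-k-complex} cannot be applied to it as stated, because the dual-exactness argument of \autoref{lemma:one-more-g} needs the $y_\gamma$ to be cocycles before $\sum_\gamma [y_\gamma]\otimes w'_\gamma$ is even a class to which exactness of the dual of $\mathcal K(k,A,k[g,g^{-1}])$ can be applied. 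That cocycle property is exactly the conclusion of the lemma you are proving, at $j-1$, at its own top index $\alpha'=i+3-j$; so any repair along your lines forces an induction on $j$ (process $j=2,\dots,t$ in increasing order, use the already-established top summand at $j-1$, and also arrange the correcting cochains to have cocycle first factors $\rho_\delta$ so that the previously processed top summand at $j-2$ is not disturbed). With that ordering your sketch — including your correct observation that a modification supported on summands with distinguished word of length $\geq j$ and trailing $g$'s cannot create projections onto \eqref{equation:end-form} at level $t-1$ — can be completed; without it, the ``most delicate point'' you defer is precisely where the proof is missing.
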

We note that \eqref{equation:end-form-augmented} is very similar to
\eqref{equation:end-form} except that the index on $\alpha$ goes up to $i + 2 -
j$ instead of only $i + 1 - j$.
\begin{proof}
	To see this, let us show the projection $\on{pr}_j(z)$ of $z$ onto the summand 
of \eqref{equation:end-form-augmented} indexed by $\alpha$ lies in 
$\fncocycle G {n-j-\alpha} c {i-j+1} \otimes k\{c^{j} \} \otimes
\underbrace{k\{c\}
\cdots \otimes k\{c\}}_{\alpha \text{ times}}
,$ after modifying $z$ by a coboundary.
If it did not, there would necessarily be a term
in $\delta(\on{pr}_j(z))$ of the
form $\fncoboundary G {n-j-\alpha} c {i-j+1} \otimes k\{c^j\}
\otimes \underbrace{k\{c\}
\cdots \otimes k\{c\}}_{\alpha \text{ times}}$. 
Hence, since $\delta(z) = 0$ so some term must cancel the above term
in $\delta(\on{pr}_j(z))$, there would be a term in $z$ of the form
$\fncell G {n-j-\alpha} c {i-j+1} \otimes k\{c^v\} \otimes k\{c^{j-v}\}
\otimes \underbrace{k\{c\}
\cdots \otimes k\{c\}}_{\alpha \text{ times}}$,
(with $v > 0$ and $j - v > 0$)
whose coboundary via shuffling the the words of length $v$ and $j-v$ is
nonzero.
Now, if $j > 2$,
we can modify $z$ by a coboundary so that
this is impossible by our hypothesis for \autoref{claim:vanishing} (which we are
assuming holds for $t - 1$) with $j$
replaced by $j-v$, as no such terms exist.
(Note that because $j - v < j$ we obtain that
$i - (j-v) + 1 \geq i-j+2$, so we obtain this when $\alpha = i + 2 -j$ as well.)
Finally, if $j = 2$, we must have $v = 1$ and $j - v = 1$.
Note that $\alpha \leq i + 2 - j = i$, so $\alpha + 2 \leq i + 2$.
Hence, we may apply the second part of 
\autoref{claim:vanishing}
for $s = \alpha+2$, (using that $s$ is at most $i +2$ as explained above,) such terms are of the form $z_{\alpha+2} \otimes \underbrace{g \otimes
\cdots \otimes g}_{\alpha+2 \text{ times}}$, and the
coboundary of $\underbrace{g \otimes
\cdots \otimes g}_{\alpha+2 \text{ times}}$ vanishes.
\end{proof}

We next wish to show the cocycles appearing in the statement of
\autoref{lemma:cocycle-modification} can be expressed in terms of particular
representatives, pulled back from the cohomology of configuration space.
We set up notation for these representatives
and then show the cohomology can be expressed in terms of these representatives
in \autoref{lemma:projection-representative}.

\begin{notation}
	\label{notation:conf-cohomology-generators}
	We keep notation as in \autoref{notation:cells} and
	\autoref{notation:z}.
	Choose $n$
large enough so that $n - t - 1$ is in the stable
range for the $(i-t+1)$st cohomology and such that \autoref{claim:vanishing} is
true for $t-1$.  
In particular, we are assuming via \autoref{notation:z} that
$\stableboundary j G
	c g \to \stable j {\id} \beta {\id}$ is an isomorphism
	for any $j < i$ and any $g \in G$.
	Hence, the stable cohomology classes in 
$(\stable {i-t+1} {G} {\beta} {c})^\vee$
are all pulled back from configuration space.

Since configuration space corresponds to a Hurwitz space for the trivial group
$\id$, we use 
$(\stable {i-t+1}
{\id} {\beta} {\id})^\vee$ as notation for the stable cohomology of configuration
space.
For all $n > 1$ choose a sequence of cocycles $w_n \in \fncocycle {\id} n {\id}
{i-t+1}$ compatible with the costabilization map, which are $0$ whenever 
$(\stable {i-t+1}
{\id} {\beta} {\id})^\vee = 0$,
and otherwise project to a nonzero element in 
$(\stable {i-t+1}
{\id} {\beta} {\id})^\vee $.
For the readers benefit, we point out that  $(\stable {i}
{\id} {\beta} {\id})^\vee = 0$ unless $i=0,1$, in which case it is $1$-dimensional.
Fixing a value of $n \bmod 2$,
if we let $f:  \boundarycphurc G n c g \to \cphurc {\id} n {\id}$ denote the
projection from the fixed stable component with boundary monodromy $g \in G$,
we obtain a sequence of cocycles $f^*(w_n) \in \fncocycle G n c {i-t+1}$
compatible with the costabilization maps uniquely
representing a spanning cocycle for $H^{l-i+1}(\boundarycphurc G n c g,k),$ for $n$ in the stable range.
\end{notation}

\begin{lemma}
	\label{lemma:projection-representative}
	Use notation from \autoref{notation:cells}, \autoref{notation:z}, and
	\autoref{notation:conf-cohomology-generators}.
	Assume \autoref{claim:vanishing} holds for $t - 1$.
(This is vacuous if $t=2$.)
	For $2 \leq j \leq t$, we can modify $z$ 
by coboundaries so
	its projection to 
	\eqref{equation:end-form-augmented} lies in 
\eqref{equation:cycle-end-form}.
Moreover, when this projection is expanded as a  of simple tensors in terms of a basis
for
$\fncocycle G {n-j-\alpha} c {i-j+1}$,
we may assume each such basis element
has component in
$\fncocycle G {n-j-\alpha} c {i-j+1}$
equal to a multiple of the chosen representative $f^*(w_{n-j-\alpha})$.
\end{lemma}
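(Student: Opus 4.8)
The plan is to begin from the output of \autoref{lemma:cocycle-modification}: after a coboundary modification of $z$ that preserves \autoref{claim:vanishing} for $t-1$, the projection of $z$ onto \eqref{equation:end-form-augmented} already lies in \eqref{equation:cycle-end-form}. Writing this projection in terms of a basis $\beta$ of the tail $k\{c^{j}\} \otimes (k\{c\})^{\otimes \alpha}$ as $\sum_{\alpha,\beta} c_{\alpha,\beta} \otimes \beta$, each coefficient $c_{\alpha,\beta}$ is then a genuine cocycle in $\fncocycle G {n-j-\alpha} c {i-j+1}$. The only remaining content of the lemma is to arrange, by a further coboundary modification, that every $c_{\alpha,\beta}$ is a scalar multiple of the standard representative $f^*(w_{n-j-\alpha})$, and to do so simultaneously for all $j$ with $2 \leq j \leq t$.

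First I would feed in the inductive hypothesis recorded in \autoref{notation:z} and \autoref{notation:conf-cohomology-generators}. Since $j \geq 2$, the cohomological degree $i-j+1$ of $c_{\alpha,\beta}$ is strictly smaller than $i$, so by hypothesis $\stableboundary{i-j+1} G c {g'} \to \stable{i-j+1}{\id}\beta{\id}$ is an isomorphism; as $n$ is chosen to place $n-j-\alpha$ in the stable range, the relevant cohomology is at most one-dimensional and spanned by the class of $f^*(w_{n-j-\alpha})$ (the boundary monodromy $g'$ of $c_{\alpha,\beta}$ being determined by that of the fixed tail $\beta$ together with the global monodromy $g$ of $z$). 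Hence each cocycle splits as $c_{\alpha,\beta} = a_{\alpha,\beta}\, f^*(w_{n-j-\alpha}) + \delta(u_{\alpha,\beta})$ for a scalar $a_{\alpha,\beta}$ and some $u_{\alpha,\beta} \in \fncell G {n-j-\alpha} c {i-j}$; this is exactly the exactness statement dual to \autoref{lemma:g-stable-k-complex}, used as in \autoref{lemma:one-more-g}.

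Next I would absorb the coboundary parts globally, replacing $z$ by $z \mp \delta\bigl(\sum_{\alpha,\beta} u_{\alpha,\beta} \otimes \beta\bigr)$, with signs chosen as in \autoref{lemma:one-more-g}. Expanding $\delta(u_{\alpha,\beta} \otimes \beta)$, the shuffles internal to $u_{\alpha,\beta}$ contribute exactly $\pm\,\delta(u_{\alpha,\beta}) \otimes \beta$, which cancels the coboundary part and leaves the desired multiple $a_{\alpha,\beta}\, f^*(w_{n-j-\alpha}) \otimes \beta$ in the summand of interest. The auxiliary terms are the shuffles internal to the tail $\beta$ and the single shuffle merging the last word of $u_{\alpha,\beta}$ into the length-$j$ word of $\beta$. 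To dispose of the tail-internal shuffles I would first arrange, using \autoref{lemma:ending-form} together with the second clause of \autoref{claim:vanishing}, that the length-one entries of $\beta$ are all equal to $g$; then the shuffle of two adjacent entries vanishes because $\on{sh}(g,g) = 0$ (as in \autoref{lemma:iterated-more-g}), and the only surviving tail-internal shuffle merges the length-$j$ word with the first $g$, raising the distinguished word length from $j$ to $j+1$. The merge with the last word of $u_{\alpha,\beta}$ likewise raises it to some $j' = j+p$, where a short count gives $1 \leq p \leq i-j+1$, so $j < j' \leq i+1$.

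The main obstacle is organizing these modifications so that they terminate and do not destroy \autoref{claim:vanishing} for $t-1$. Every auxiliary term strictly increases the distinguished word length $j'$, which is bounded above by $i+1$; this gives a well-founded quantity on which to induct, and the case $j = i+1$ is automatic since then $\fncell G {n-j-\alpha} c {i-j}$ vanishes and no coboundary is produced. The delicate point is that a merge term can land in \eqref{equation:end-form} for some $j'$ with $j < j' \leq t-1$ (precisely when its trailing index satisfies $\alpha \leq i+1-j'$), which \autoref{claim:vanishing} for $t-1$ requires to be zero; such a term must itself be re-zeroed by the same decomposition, producing only terms of strictly larger $j'$, so the process closes after finitely many steps. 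Terms with $j' > t$ fall outside the range the lemma constrains and may be discarded. Tracking the signs and verifying at each stage that the re-examined contributions are again multiples of the standard representatives, so that no uncontrolled term survives in \eqref{equation:end-form} for $j' \leq t-1$, is the genuinely technical part; once it is settled, every $c_{\alpha,\beta}$ is a multiple of $f^*(w_{n-j-\alpha})$, as required.
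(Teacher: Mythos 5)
Your proposal follows essentially the same route as the paper: apply \autoref{lemma:cocycle-modification} to land in \eqref{equation:cycle-end-form}, then use the inductive hypothesis of \autoref{notation:z} (stable cohomology in degree $i-j+1 < i$ is pulled back from configuration space) to write each cocycle component as a multiple of $f^*(w_{n-j-\alpha})$ plus a coboundary $\delta(u_{\alpha,\beta})$, and absorb these by modifying $z$ by $\delta\bigl(\sum u_{\alpha,\beta}\otimes\beta\bigr)$. The bookkeeping you carry out for the auxiliary shuffle/merge terms (induction on the distinguished word length, vanishing of $\on{sh}(g,g)$, terms of larger length falling outside the constrained range) is detail the paper's proof leaves implicit, and it is consistent with the paper's argument.
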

\begin{proof}
	Since we are assuming \autoref{claim:vanishing} holds for $t - 1$, we
	can apply
	\autoref{lemma:cocycle-modification}. We can then assume the projection
	of $z$ to \eqref{equation:end-form-augmented} lies in
	\eqref{equation:cycle-end-form}.
	Now, modifying the resulting element by a coboundary of an element in 
	$\oplus_{\alpha=0}^{i+2-j} \fncell G {n-j-\alpha} c {i-j} \otimes k\{c^{j} \} \otimes
	\underbrace{k\{c\}
		\cdots \otimes k\{c\}}_{\alpha \text{ times}}$
	so that 
	we may assume each such term
has component in
$\fncocycle G {n-j-\alpha} c {i-j+1}$
equal to a multiple of
$f^*(w_{n-j-\alpha})$, as desired.
\end{proof}

The next lemma further constrains the form of $z$ by using the above constructed
representatives to show that many projections of $z$ vanish.
	
\begin{lemma}
	\label{lemma:vanishing-projection}
	Using notation as in \autoref{claim:vanishing}, suppose
	\autoref{claim:vanishing} holds for $t-1$.
(This is vacuous if $t=2$.)
	Assume $z$ has been modified by a coboundary to satisfy the conclusion of
	\autoref{lemma:projection-representative}.
Suppose $w \in \fncell G n c i$ is a basis vector in the form \eqref{equation:w-form},
and $w$ does not lie in
\begin{align}
\label{equation:delta-dependence-no-cocycle}
\fncell G {n-t-\alpha} c {i-t+1} \otimes k\{c^{t} \} \otimes
\underbrace{k\{c\}
\cdots \otimes k\{c\}}_{\alpha \text{ times}}
\oplus
\fncell G {n-t-\alpha-1} c {i-t+1} \otimes k\{c^{t} \} \otimes k\{c\} \otimes
\underbrace{k\{c\}
\cdots \otimes k\{c\}}_{\alpha \text{ times}}.
\end{align}
Then, for
$0 \leq \alpha \leq i + 1 - t$,
$\on{pr}_w(z)$ the projection of $z$ onto any such $w$, 
the projection of $\delta\left( \on{pr}_w(z) \right)$
to
\begin{equation}
	\fncell G {n-t-1
	- \alpha}
	c {i - t+1}
	\otimes k\{c^{t+1}\} \otimes
\underbrace{k\{c\} \cdots \otimes k\{c\}}_{\alpha \text{ times}}
\label{equation:boundary-projection-term}
\end{equation}
vanishes.
Moreover, the projection of $z$ onto
\eqref{equation:delta-dependence-no-cocycle} lands in 
\begin{align}
\label{equation:delta-dependence}
\fncocycle G {n-t-\alpha} c {i-t+1} \otimes k\{c^{t} \} \otimes
\underbrace{k\{c\}
\cdots \otimes k\{c\}}_{\alpha \text{ times}}
\oplus
\fncocycle G {n-t-\alpha-1} c {i-t+1} \otimes k\{c^{t} \} \otimes k\{c\} \otimes
\underbrace{k\{c\}
\cdots \otimes k\{c\}}_{\alpha \text{ times}}.
\end{align}
%
\end{lemma}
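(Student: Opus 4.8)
The plan is to derive both assertions from the single relation $\delta(z)=0$, examined one summand of $\fncell G n c {i+1}$ at a time, exploiting that the shuffle differential $\delta$ only merges a pair of \emph{adjacent} words. First I would classify the basis vectors $w$ whose image $\delta(w)$ has a nonzero component in the target summand \eqref{equation:boundary-projection-term}. A single merge can produce the tail $k\{c^{t+1}\}\otimes k\{c\}^{\otimes\alpha}$ in exactly two ways: type (I), where this tail is already present in $w$ and the merge occurs strictly to its left, inside the generic factor $\fncell G {n-t-1-\alpha} c {i-t}$; and type (II), where the length $t+1$ word is created by merging two adjacent words of lengths $a,b$ with $a+b=t+1$ and $a,b\ge 1$, so that $w$ has tail $k\{c^{a}\}\otimes k\{c^{b}\}\otimes k\{c\}^{\otimes\alpha}$. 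No merge touching the last $\alpha$ letters can reach the target, since it would change the lengths of the trailing blocks.

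Next I would dispose of type (II). When $2\le b\le t-1$, the length $b$ word sits immediately before the $\alpha$ trailing letters, so $w$ lies in the $j=b$ summand of \autoref{claim:vanishing} (here $\alpha\le i+1-t\le i+1-b$); since \autoref{claim:vanishing} is assumed for $t-1$ and $b\le t-1$, the coefficient $\on{pr}_w(z)$ vanishes. The only other possibilities are $(a,b)=(t,1)$ and $(a,b)=(1,t)$, and a direct check of block structure shows both place $w$ in \eqref{equation:delta-dependence-no-cocycle}: in the case $(t,1)$ the tail is $k\{c^t\}\otimes k\{c\}\otimes k\{c\}^{\otimes\alpha}$, matching the second summand, while in the case $(1,t)$ the leading length $1$ word is absorbed into the generic factor, matching the first. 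Hence every \emph{non-excluded} $w$ contributing to the target is of type (I).

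It remains to treat type (I), which is the crux. Grouping the type (I) basis vectors by a fixed length $t+1$ word $W$ and fixed trailing letters, their total contribution to the target is, up to sign, $\delta_{\mathrm{gen}}(P_W)$, where $P_W$ is the generic coefficient of that tail and $\delta_{\mathrm{gen}}$ is the shuffle differential of the generic Fox--Neuwirth complex. Projecting $\delta(z)=0$ onto the target with $W$ held fixed relates $\delta_{\mathrm{gen}}(P_W)$ to the remaining (excluded) type (II) contributions, so I would use the normalization already arranged in \autoref{lemma:projection-representative}, namely that the length $\le t$ projection of $z$ is a sum of cocycles pulled back from configuration space, to compute those excluded contributions explicitly. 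The centralizer hypothesis of \autoref{notation:k-complex}, that each element of $c$ is its own centralizer, controls the conjugations produced by shuffling the length $t$ word past a single letter exactly as in \autoref{lemma:nullhomotopy}; combined with the cocycle normalization this forces the excluded contributions to decouple and $\delta_{\mathrm{gen}}(P_W)$ to vanish, which is the content of the first assertion.

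Finally, for the ``moreover'' I would argue that the projection of $z$ onto \eqref{equation:delta-dependence-no-cocycle} is automatically a sum of cocycles: were a generic factor $P$ sitting in front of a length $t$ word not closed, then $\delta_{\mathrm{gen}}(P)$ would contribute a nonzero term to $\delta(z)$ in a summand carrying a length $t$ word, and the only basis vectors that could cancel it by \emph{creating} that length $t$ word have tail $k\{c^{a'}\}\otimes k\{c^{b'}\}\otimes k\{c\}^{\otimes\alpha}$ with $a'+b'=t$; each such vector carries a word of length $\le t-1$ immediately before its trailing letters and so has vanishing coefficient by the inductive hypothesis. With no term available to cancel it, $\delta(z)=0$ forces $\delta_{\mathrm{gen}}(P)=0$. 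I expect the decisive difficulty to be the type (I) analysis of the third paragraph: isolating the contribution of a single length $t+1$ word and showing the non-excluded part vanishes, which is exactly where the cocycle relation, the inductive hypothesis, and the cocycle normalization of \autoref{lemma:projection-representative} have to be used in concert while tracking the shuffle conjugations.
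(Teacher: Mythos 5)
Your first two paragraphs reproduce the paper's proof exactly: the classification into your types (I) and (II), killing type (II) with $2\le b\le t-1$ by the inductive hypothesis, and identifying the cases $(a,b)=(t,1)$ and $(1,t)$ with the two summands of \eqref{equation:delta-dependence-no-cocycle} are precisely the paper's steps. The problem is your third paragraph, which you yourself flag as the decisive difficulty: it is not an argument, and the tool you name for it is the wrong one. What actually forces $\delta_{\mathrm{gen}}(P_W)$ to vanish is a coboundary-versus-noncoboundary dichotomy, not any control of shuffle conjugations. The type (I) contribution lies in
\begin{equation*}
\fncoboundary G {n-t-1-\alpha} c {i-t+1} \otimes k\{c^{t+1}\} \otimes \underbrace{k\{c\} \otimes \cdots \otimes k\{c\}}_{\alpha \text{ times}},
\end{equation*}
while $\delta(z)=0$ forces it to equal minus the type (II) contributions coming from \eqref{equation:delta-dependence}, whose generic tensor factors are built from the fixed representatives $f^*(w_\bullet)$ of \autoref{notation:conf-cohomology-generators} (this is exactly what the assumed conclusion of \autoref{lemma:projection-representative} provides). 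Since a nonzero multiple of $f^*(w_\bullet)$ is by construction never a coboundary, the two kinds of terms intersect only in zero, so each sum vanishes separately; in particular the projection of $z$ to \eqref{equation:longer-end} has cocycle generic factors, which is the first assertion. Your proposal never states this non-coboundary property---the entire reason \autoref{lemma:projection-representative} fixes those particular representatives---and instead leans on the centralizer hypothesis and conjugation-tracking ``exactly as in \autoref{lemma:nullhomotopy},'' which plays no role in this lemma (it is used for the two-sided $\mathcal K$-complex, i.e., in \autoref{lemma:nullhomotopy} and \autoref{lemma:exact-triangle}). So the crux is left as an assertion, with the wrong mechanism named for it.

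A secondary issue: the ``moreover'' clause needs no new argument, since it is contained in the assumed conclusion of \autoref{lemma:projection-representative} (equivalently \autoref{lemma:cocycle-modification}; the only point to check is that the extended range $\alpha\le i+2-j$ there covers both summands of \eqref{equation:delta-dependence-no-cocycle}). Your from-scratch re-derivation is moreover gapped at $t=2$: there your canceling vectors have $a'=b'=1$, i.e., they end in $\alpha+2$ single letters, and the inductive hypothesis---vacuous for $t=2$---says nothing about them. The paper's \autoref{lemma:cocycle-modification} disposes of exactly this case by a different input: the second part of \autoref{claim:vanishing} forces all trailing letters of such terms to equal $g$, and $\on{sh}(g,g)=0$. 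Similarly, for $t\ge 3$ and $b'=1$ the word ``immediately before the trailing letters'' has length one, so you must instead apply the inductive hypothesis to the length $t-1$ word with $\alpha+1$ trailing letters; this is fixable, but as written your case analysis does not close.
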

\begin{proof}
	First, the final statement that the projection onto
	\eqref{equation:delta-dependence-no-cocycle}
	factors through
	\eqref{equation:delta-dependence} follows from
	\autoref{lemma:cocycle-modification}.
	This importantly uses that $\alpha$ appearing in the sum in
	\eqref{equation:end-form-augmented} runs up to $i + 2 - j$ while
	$\alpha$ appearing in \eqref{equation:end-form} only runs up to $i + 1 -
	j$, so that \autoref{lemma:cocycle-modification} applies to both terms in in
	\eqref{equation:delta-dependence-no-cocycle}.

	Using our inductive hypothesis that \autoref{claim:vanishing} holds for
	$j$ with $2 \leq j < t$,
	the projection of $z$ onto all terms of the form 
\begin{equation}
\fncell G {n-t-\alpha} c {i-t+1} \otimes k\{c^v\} \otimes k\{c^{t-v} \} \otimes
\underbrace{k\{c\}
\cdots \otimes k\{c\}}_{\alpha \text{ times}}
\end{equation}
with $0 \leq \alpha \leq i + 1 - t$, $v \geq 1$, and $t-v \geq 2$
vanishes. 
(This uses that the above condition holds when $\alpha \leq i + 1 - (t-v)$, so
	it holds in particular when
$\alpha \leq i + 2 - t$.)
The only other term whose coboundary can contribute to the projection
onto \eqref{equation:boundary-projection-term}
is
\begin{align}
\label{equation:longer-end}
\fncell G {n-t-\alpha-1} c {i-t} \otimes k\{c^{t+1}\} \otimes
\underbrace{k\{c\}
\cdots \otimes k\{c\}}_{\alpha \text{ times}}.
\end{align}
However, the contribution from the coboundary of this term to 
\eqref{equation:boundary-projection-term}
necessarily lies in 
\begin{equation}
\label{equation:boundary-of-edge-term}
	\fncoboundary G {n-t-1 - \alpha} c {i - t+1}\otimes k\{c^{t+1}\} \otimes
\underbrace{k\{c\} \cdots \otimes k\{c\}}_{\alpha \text{ times}}.
\end{equation}
On the other hand, using notation from
\autoref{notation:conf-cohomology-generators},
each simple tensor in the expansion of the image of the
projection of $z$ to \eqref{equation:delta-dependence} along the 
$\fncocycle G {n-t-\alpha-1} c {i-t+1}$ factor consists of a cocycle that is a multiple of $f^*(w_{n-j-\alpha})$. 

Since such a cocycle is not a coboundary unless it is $0$, the sum
of the terms
lying in the subspace \eqref{equation:boundary-of-edge-term}
must vanish. In other words,
the projection of $z$ to \eqref{equation:longer-end}
in fact lies in 
$\fncocycle G {n-t-\alpha-1} c {i-t} \otimes k\{c^{t+1}\} \otimes
\underbrace{k\{c\}
\cdots \otimes k\{c\}}_{\alpha \text{ times}}.
$
This establishes our claim.
\end{proof}

The above lemmas are relatively straightforward reductions that allow us to
significantly simplify the form of $z$.
The next lemma is really the key step,
which uses the two-sided $\mathcal K$-complex introduced in the previous
section.
Exactness of that complex gives us exactness in the next lemma, which we will
then use to further simplify the form of $z$.
	
\begin{lemma}
	\label{lemma:exact-triangle}
	Let $\alpha\geq 0, t\geq 2$ and $n\geq 0$ be integers with $\alpha + t <
	n$.
	Continuing to use notation as in \autoref{notation:cells} and
	\autoref{notation:z}, the restriction of the following three term
	sequence to the
subcomplex spanned by tensors of elements which generate
$G$ is exact:
\begin{equation}
\label{equation:2-sided-extraction}
\begin{aligned}
&\fncohomology G {n-t+1-\alpha} c {i-t+1}  \otimes k\{c^{t-1}\} \otimes \underbrace{k\{g\} \otimes
\cdots k\{g\}}_{\alpha \text{ times}}
\\
&\oplus
\fncohomology G {n-t-\alpha} c {i-t+1} \otimes k\{c^{t-1}\} \otimes k\{g\}
\otimes \underbrace{k\{g\} \otimes
\cdots k\{g\}}_{\alpha \text{ times}}
\\
&\oplus \fncohomology G {n-t-1-\alpha} c {i-t+1} \otimes k\{c^{t-1}\}  \otimes k\{g\} \otimes
k\{g\} \otimes \underbrace{k\{g\} \otimes
\cdots k\{g\}}_{\alpha \text{ times}}
&\\
\xrightarrow{\mu_\alpha} &\fncohomology G {n-t-\alpha} c {i-t+1} \otimes k\{c^t\}
\otimes \underbrace{k\{g\} \otimes
\cdots k\{g\}}_{\alpha \text{ times}}
\\
&\oplus \fncohomology G
{n-t-1-\alpha} c {i-t+1} \otimes k\{c^t\} \otimes k\{g\} \otimes \underbrace{k\{g\} \otimes
\cdots k\{g\}}_{\alpha \text{ times}}
\\
\xrightarrow{\nu_\alpha} &\fncohomology G {n-t-1-\alpha} c{i-t+1} \otimes k\{c^{t+1}\}
\otimes \underbrace{k\{g\} \otimes
\cdots k\{g\}}_{\alpha \text{ times}}.
\end{aligned}
\end{equation}
\end{lemma}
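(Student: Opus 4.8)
The plan is to strip off the cohomology factors in \eqref{equation:2-sided-extraction} and then recognize what remains as three consecutive total degrees of the (dual of the) two-sided $\mathcal K$-complex studied in \autoref{section:2-sided}, so that exactness at the middle term becomes a direct consequence of \autoref{proposition:upper-triangle-exact}. First I would dispose of the factors $\fncohomology G m c {i-t+1}$. Since $t \geq 2$, these sit in degree $i-t+1 < i$, so by the inductive hypothesis recorded in \autoref{notation:conf-cohomology-generators} they are pulled back from configuration space and are at most one-dimensional, with chosen generators $f^*(w_m)$ compatible with the costabilization maps. If they vanish the whole sequence is zero and there is nothing to prove, so I may assume each is one-dimensional, spanned by $f^*(w_m)$, and that the various $m$ occurring are identified with one another along the costabilization isomorphisms (valid in the stable range, which is exactly what $\alpha + t < n$ ensures). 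After tensoring with this line, \eqref{equation:2-sided-extraction} reduces to a sequence built only from the ``tail'' factors $k\{c^s\}$ and the trailing copies of $k\{g\}$.

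Next I would match this tail sequence to the two-sided $\mathcal K$-complex by tracking two parameters on a generator: the length $s$ of the $c$-block (running through $t-1, t, t+1$) and the number of trailing $g$'s beyond the fixed $\alpha$. On such a generator the shuffle coboundary $\delta$ contributes exactly two kinds of terms: shuffling the $c$-block rightward into the adjacent trailing $g$, which lengthens the block by one and consumes one $g$, and shuffling the $c$-block leftward into the last letter of the cocycle $f^*(w_m)$, which lengthens the block by one while shrinking the configuration-space cell by one point. The compatibility of the $f^*(w_m)$ with costabilization is precisely what makes the second operation act as multiplication by $[g]$ on the left module $k[g]$ of \autoref{notation:module}. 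Under the duality between the Fox-Neuwirth/Fuks cochain complex and the $\mathcal K$-complex (as used in the proof of \autoref{lemma:one-more-g}), these two operations are dual to the differentials $d_{\on{r}}$ and $d_{\on{l}}$ of $\mathcal K(k[g], A, k[g,g^{-1}])$, and the three cohomological degrees $i-1, i, i+1$ of $A$, $B$, $C$ correspond to three consecutive anti-diagonals of the cone complex $D^x_{\bullet,\bullet}$ of \autoref{proposition:upper-triangle-exact}, for the value of $x$ determined by $t$ and $\alpha$; the inequality $\alpha + t < n$ guarantees we lie in the interior of this cone rather than against its truncated edge.

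I would then conclude as follows. By \autoref{proposition:upper-triangle-exact}, $D^x_{\bullet,\bullet}$ is exact everywhere except possibly at its apex $(0,x)$, so dually the sequence is exact at every total degree except possibly the extreme one. Since the middle term $B$ sits at degree $i$ and is flanked on both sides by $A$ at degree $i-1$ and $C$ at degree $i+1$, it is interior and hence not the exceptional spot, which gives exactness of \eqref{equation:2-sided-extraction} at its middle term. The restriction to the subcomplex of tensors generating $G$ causes no trouble: the nullhomotopy $\sigma_\bullet$ of \autoref{lemma:nullhomotopy} only conjugates existing entries and inserts $hgh^{-1}$, hence preserves the subgroup generated by the entries together with $g$, so the exactness of \autoref{proposition:upper-triangle-exact} descends to the generate-$G$ subcomplex appearing in the statement.

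The main obstacle will be the identification in the middle step: verifying that the two components of $\delta$ really are dual to $d_{\on{r}}$ and $d_{\on{l}}$ with the correct signs, and in particular that the leftward shuffle into $f^*(w_m)$ is faithfully modeled by the left $k[g]$-action and does not produce spurious cross-terms. This is exactly where the costabilization-compatibility of the representatives $f^*(w_m)$ and the self-centralizing hypothesis of \autoref{notation:k-complex} (so that $hgh^{-1}\neq g$ for $h\neq g$, killing the offending terms, just as in \autoref{lemma:nullhomotopy}) are needed.
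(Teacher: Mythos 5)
Your endgame --- identifying the tail of the sequence with the dual of the cone complex $D^{x}_{\bullet,\bullet}$ and invoking \autoref{proposition:upper-triangle-exact} to see that the exceptional spot misses the middle term --- is indeed how the paper concludes, but your first step contains a genuine gap on which everything afterwards depends. You claim each factor $\fncohomology G m c {i-t+1}$ is ``at most one-dimensional, spanned by $f^*(w_m)$,'' so that the whole sequence becomes a line tensored with the tail factors $k\{c^s\}\otimes k\{g\}^{\otimes \alpha}$. This is false: $\fncohomology G m c {i-t+1}$ is the cohomology of the full Fox--Neuwirth complex, i.e.\ of all of $\phurc G m c$, and it splits as a direct sum over the many connected components of that space --- the $\#c$ configuration-space components with constant monodromy $h \in c$, together with the components of $\cphurc G m c$ indexed by boundary monodromy (of which there are $\ell$ in the stable range, by \autoref{lemma:components}). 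The class $f^*(w_m)$ of \autoref{notation:conf-cohomology-generators} spans only the summand attached to one fixed component. Restricting to tensors whose entries generate $G$ does not remove the extra summands (an all-$h$ component with $h \neq g$ already generates $G$ together with $g$), and the differentials $\mu_\alpha, \nu_\alpha$ act nontrivially on them through the module structure, so they cannot simply be dropped.

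The missing ingredient is the splitting that occupies the first half of the paper's proof: after reducing to $i-t+1 \in \{0,1\}$ and then to $i-t+1=0$, one filters the sequence by the subcomplex built from $\widetilde{\fncohomology G \bullet c {i-t+1}}$, the span of all components other than the all-$g$ one, and shows this subcomplex is exact by \autoref{lemma:zero-and-iso-vanishes} --- on those summands some $[h]$ with $h \neq g$ acts invertibly, while $[h]$ kills $k[g,g^{-1}]$. Only then is the quotient (the line spanned by the all-$g$ component, restricted to generating tensors) identified with the dual of $D^{-t-1}_{\bullet,\bullet}$, to which \autoref{proposition:upper-triangle-exact} applies. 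Your proposal contains the second half of this argument but not the first, and the first is where one of the two key exactness inputs enters; without it the identification with $D^{-t-1}_{\bullet,\bullet}$ simply does not hold, since the sequence has additional summands with nontrivial differentials between them. (A smaller point: your final paragraph worries about descending exactness to the generate-$G$ subcomplex, but \autoref{proposition:upper-triangle-exact} is already a statement about that subcomplex --- $C'_{\bullet,\bullet}$ is by definition spanned by tuples with some entry $\neq g$, which for dihedral $G$ is exactly the generating condition --- so no descent argument is needed there.)
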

\begin{proof}
First, if $i-t+1 \notin \{0,1\}$, we are done, as this implies the above complex
is $0$, using that we are inductively assuming 
(via \autoref{notation:z} via
\autoref{proposition:stabilize-by-one-element-trivial})
the $(i-t+1)$th cohomology is
pulled back from configuration space, and this vanishes in degrees more than
$1$.

Hence, we now assume $i - t + 1 \in \{0,1\}$.
In this case, since 
$\fncohomology G {n-t-1-\alpha} c {i-t+1}$ is pulled back from configuration
space, the module structure in the case $i - t + 1 = 1$ is the same as the
module structure in the case $i - t + 1 = 0$, so we may assume $i - t + 1 = 0$.
Let $\widetilde{\fncohomology G {n-t-1-\alpha} c{i-t+1}} \subset\fncohomology G {n-t-1-\alpha} c{i-t+1}$
denote the codimension $1$ subspaces spanned by all components other than the
one with only monodromy $g$.
There is a subcomplex
of \eqref{equation:2-sided-extraction} given by replacing each 
$\fncohomology G {\bullet} c {\bullet}$ term with the corresponding codimension
$1$ subspace
$\widetilde{\fncohomology G {\bullet} c {\bullet}}$.
Specifically, this subcomplex is given by
\begin{equation}
\label{equation:2-sided-extraction-subcomplex}
\begin{aligned}
	&\widetilde{\fncohomology G {n-t+1-\alpha} c {i-t+1}}  \otimes k\{c^{t-1}\} \otimes \underbrace{k\{g\} \otimes
\cdots k\{g\}}_{\alpha \text{ times}}
\\
&\oplus
\widetilde{
\fncohomology G {n-t-\alpha} c {i-t+1}} \otimes k\{c^{t-1}\} \otimes k\{g\}
\otimes \underbrace{k\{g\} \otimes
\cdots k\{g\}}_{\alpha \text{ times}}
\\
&\oplus \widetilde{\fncohomology G {n-t-1-\alpha} c {i-t+1}} \otimes k\{c^{t-1}\}  \otimes k\{g\} \otimes
k\{g\} \otimes \underbrace{k\{g\} \otimes
\cdots k\{g\}}_{\alpha \text{ times}}
&\\
\xrightarrow{\mu_\alpha} &\widetilde{\fncohomology G {n-t-\alpha} c {i-t+1}} \otimes k\{c^t\}
\otimes \underbrace{k\{g\} \otimes
\cdots k\{g\}}_{\alpha \text{ times}}
\\
&\oplus \widetilde{\fncohomology G
{n-t-1-\alpha} c {i-t+1}} \otimes k\{c^t\} \otimes k\{g\} \otimes \underbrace{k\{g\} \otimes
\cdots k\{g\}}_{\alpha \text{ times}}
\\
\xrightarrow{\nu_\alpha} &\widetilde{\fncohomology G {n-t-1-\alpha} c{i-t+1}} \otimes k\{c^{t+1}\}
\otimes \underbrace{k\{g\} \otimes
\cdots k\{g\}}_{\alpha \text{ times}}.
\end{aligned}
\end{equation}
This subcomplex is in fact exact using \autoref{lemma:zero-and-iso-vanishes}, as
we next explain.
The reason this is exact is that it can be identified, via the spectral sequence
described in 
\autoref{subsection:two-sided-k-complex}
as dual to part of the cohomology of the complex
$\widetilde{\fncohomology G {\bullet} c{i-t+1}} \otimes_A k[g,g^{-1}]$.
(This identification is quite similar to the description of $C_{\bullet,
\bullet}$ in \autoref{lemma:two-sided-k-complex-descrption}
and we omit further details.)
The module 
$\widetilde{\fncohomology G {\bullet} c{i-t+1}}$
is then spanned by modules on which some $h
\in c, h \neq g$ acts invertibly, and hence it follows from 
\autoref{lemma:zero-and-iso-vanishes} that this subcomplex is exact.
Now, let $\overline{\fncohomology G {n-t-1-\alpha} c{i-t+1}} := \fncohomology G
{n-t-1-\alpha} c{i-t+1}/ \widetilde{\fncohomology G
{n-t-1-\alpha} c{i-t+1}}$ denote the $1$-dimensional quotient.
Exactness of \eqref{equation:2-sided-extraction-subcomplex} implies that the
cohomology of \eqref{equation:2-sided-extraction} is identified with the
cohomology of the quotient complex, which is explicitly given by
\begin{equation}
\label{equation:2-sided-extraction-quotient}
\begin{aligned}
	&\overline{\fncohomology G {n-t+1-\alpha} c {i-t+1}}  \otimes k\{c^{t-1}\} \otimes \underbrace{k\{g\} \otimes
\cdots k\{g\}}_{\alpha \text{ times}}
\\
&\oplus
\overline{
\fncohomology G {n-t-\alpha} c {i-t+1}} \otimes k\{c^{t-1}\} \otimes k\{g\}
\otimes \underbrace{k\{g\} \otimes
\cdots k\{g\}}_{\alpha \text{ times}}
\\
&\oplus \overline{\fncohomology G {n-t-1-\alpha} c {i-t+1}} \otimes k\{c^{t-1}\}  \otimes k\{g\} \otimes
k\{g\} \otimes \underbrace{k\{g\} \otimes
\cdots k\{g\}}_{\alpha \text{ times}}
&\\
\xrightarrow{\mu_\alpha} &\overline{\fncohomology G {n-t-\alpha} c {i-t+1}} \otimes k\{c^t\}
\otimes \underbrace{k\{g\} \otimes
\cdots k\{g\}}_{\alpha \text{ times}}
\\
&\oplus \overline{\fncohomology G
{n-t-1-\alpha} c {i-t+1}} \otimes k\{c^t\} \otimes k\{g\} \otimes \underbrace{k\{g\} \otimes
\cdots k\{g\}}_{\alpha \text{ times}}
\\
\xrightarrow{\nu_\alpha} &\overline{\fncohomology G {n-t-1-\alpha} c{i-t+1}} \otimes k\{c^{t+1}\}
\otimes \underbrace{k\{g\} \otimes
\cdots k\{g\}}_{\alpha \text{ times}}.
\end{aligned}
\end{equation}
To conclude, we wish to show the part of this complex spanned by tensors of elements generating $G$
is exact.
This part of the quotient complex identified as dual to the subcomplex
$D^{-t-1}_{\bullet, \bullet}$ defined in
\autoref{proposition:upper-triangle-exact}.
Hence, it is exact by \autoref{proposition:upper-triangle-exact}.
\end{proof}

Combining the above lemmas, we now prove \autoref{claim:vanishing}.

\subsubsection{Proof of \autoref{claim:vanishing}}
\label{subsubsection:proof-vanishing}
Note that the second part holds by \autoref{lemma:ending-form}, so
it remains to prove the first part, which we will do
by induction on $t$. We assume 
\autoref{claim:vanishing} holds for $t -1$.
(When $t = 2$, this condition is vacuous.)

After modifying $z$ by a coboundary,
we may assume the projection of $z$ to \eqref{equation:delta-dependence} 
lies in the kernel of $\nu_\alpha$, defined
in \eqref{equation:2-sided-extraction},
by \autoref{lemma:vanishing-projection}.

We next claim we can modify $z$ by an element in the image of $\mu_\alpha$ so that the projection of
$z$ onto 
$\fncohomology G {n-t-\alpha} c {i-t+1} \otimes k\{c^t\}\otimes \underbrace{k\{g\} \otimes
\cdots k\{g\}}_{\alpha \text{ times}}
$ vanishes.
First, when $\alpha = 0$,
this follows from exactness of \eqref{equation:2-sided-extraction}, established in
\autoref{lemma:exact-triangle}. 
Applying this for each $\alpha$ with $0 \leq \alpha \leq i - t$,
we can arrange that 
the projection of $z$ onto 
$\fncohomology G {n-t-\alpha} c {i-t+1} \otimes k\{c^t\}\otimes \underbrace{k\{g\} \otimes
\cdots k\{g\}}_{\alpha \text{ times}}
$ vanishes. Note we use here that the cocycles $f^*(w_{n})$ are closed under the
costabilization map.

Finally, using \autoref{lemma:projection-representative},
after modifying $z$ by a coboundary, because its projection to 
$\fncohomology G {n-t-\alpha} c {i-t+1} \otimes k\{c^t\}\otimes \underbrace{k\{g\} \otimes
\cdots k\{g\}}_{\alpha \text{ times}}
$ vanishes, we may also
assume its projection to 
\eqref{equation:end-form} vanishes. 
\qed

\subsubsection{Proof of \autoref{proposition:stabilize-by-one-element-trivial}}
\label{subsubsection:proof-of-stabilize-by-one}

By \autoref{proposition:cocycle-form},
we may assume that, after modifying $z$ by a coboundary, $z = y_1 \otimes g$ for
$y_1 \in \fncell G {n-1} c i$.
In fact, we have, $y_i \in \cfncell G {n-1} c i$
using the definition of $\cfncell G {n-1} c i$
and the assumption that
$y_1 \otimes g \in \cfncell G {n-1} c i$ using that $z \in \stableboundary i G c
g$.
We now conclude by \autoref{lemma:iterated-more-g}.
\qed

\begin{example}
	\label{example:h1-argument}
	In this example, we run through the argument for
	\autoref{proposition:stabilize-by-one-element-trivial}
	in the case of $i = 1$, i.e., for the first (co)homology group.
	We use notation as in \autoref{notation:cells} and \autoref{notation:z}.
	We fix an element $g \in c$, assume $n$ is sufficiently large, and assume we have an element $\overline{x} \in H^1(\cphurc G {n+1} c, k)$
	such that $\overline{x}[h] = 0$, $\overline{x}[g][h] = 0$, and
	$\overline{x}[g]^2[h] = 0$ for all $h \neq g$.
	We can choose a representative $x \in \fncocycle G n c 1$,
	projecting to $\overline{x}$.
	We can then also express $x$ as an element of
	\begin{align*}
		&\fncell G {n-2} c 0 \otimes k\{c^2\} \\
		&\oplus 
		\fncell G {n-3} c 0 \otimes k\{c^2\} \otimes k\{c\} \\
		&\oplus
		\fncell G {n-4} c 0 \otimes k\{c^2\} \otimes  k\{c\} \otimes
		k\{c\} \\
		&\oplus
		\fncell G {n-3} c 1 \otimes k\{c\} \otimes k\{c\} \otimes k\{c\}.
	\end{align*}
	We picture $x$ on the left of \autoref{figure:h1-fuks-cells}.
\begin{figure}
	\includegraphics[scale=.6]{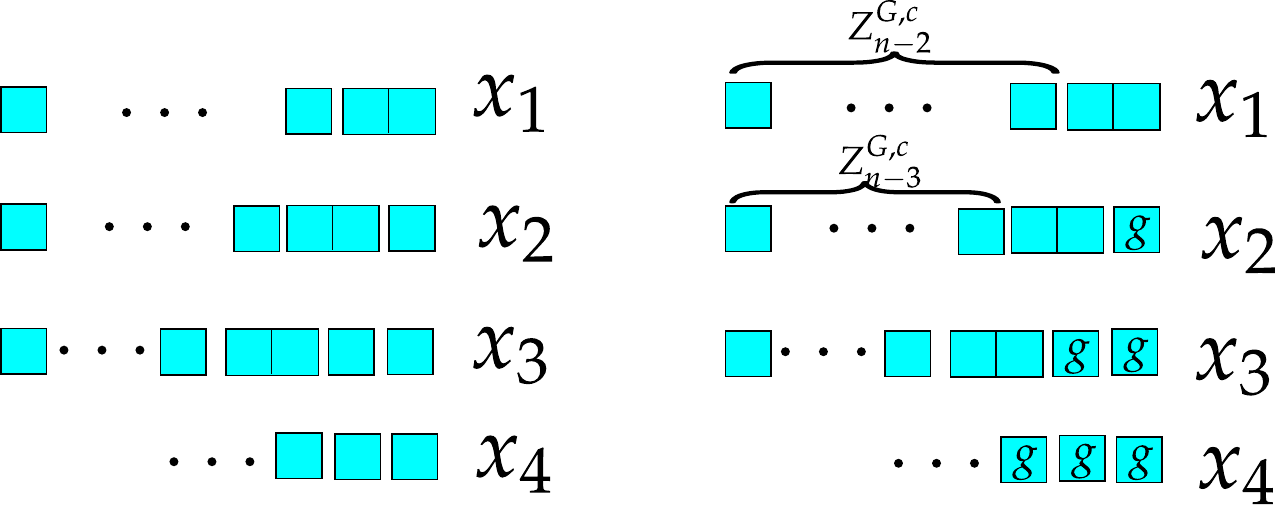}
	\caption{The left is a picture of the Fox-Neuwirth/Fuks cell structure of an
	element of the first stable cohomology. The right hand side pictures
some additional constraints we may impose on the cell structure, as explained in
\autoref{example:h1-argument}.}
\label{figure:h1-fuks-cells}
\end{figure}

We next carry out \autoref{lemma:ending-form} explicitly in this case.
	Write $x = x_1 + x_2 + x_3 + x_4$, where $x_j$ is in the $j$th component
	above.
	We know $\overline{x}[h] = 0$.
	This means $x[h] = \delta(x'_h)$ for some $x'_h \in \fncell G n c 0$.
	Then, when we replace $x$ by $x - \sum_{h \neq g} \delta(x'_h \otimes h)$,
	we find that $x[h]= 0$.
	The condition that $x[h] = 0$ for all $h \neq g$ implies that $x_i = y_i \otimes g$ for $2
	\leq i \leq 4$. 
	By similarly modifying $x$ by a coboundary yet again, we can arrange
	that $x[g][h] =0$ and $x[g]^2[h] = 0$.
	The condition that $x[g][h] = 0$ implies $x_i =
	z_i \otimes g \otimes g$ for $3 \leq i \leq 4$.
	Finally, the condition that $x[g]^2[h] = 0$ implies $x_4 = w_4 \otimes g
	\otimes g \otimes g$.
	Hence, we have that $x$ lies in
	\begin{align*}
		&\fncell G {n-2} c 0 \otimes k\{c^2\} \\
		&\oplus 
		\fncell G {n-3} c 0 \otimes k\{c^2\} \otimes k\{g\} \\
		&\oplus
		\fncell G {n-4} c 1 \otimes k\{c^2\} \otimes  k\{g\} \otimes
		k\{g\} \\
		&\oplus
		\fncell G {n-3} c 1 \otimes k\{g\} \otimes k\{g\} \otimes k\{g\}.
	\end{align*}
	
	We now carry out \autoref{lemma:cocycle-modification} explicitly in this
	case.
	Note that we can write $x_1$ as a sum of tensor products $\sum_j u_j
	\otimes v_j$, where $u_j \in \fncell G {n-2} c 0$ 
	and $v_j \in
	k\{c^2\}$.
	Using that $\delta(x) = 0$,
	we can arrange that each $v_j$ satisfies $\delta(v_j)=0$ because
	$0= \delta(x) = \delta(x_1) + \delta(x_2) + \delta(x_3) + \delta(x_4)$ but
	$\delta(x_2)$, $\delta(x_3)$, and $\delta(x_3)$ all have no terms ending
	in $k\{c^2\}$,
	using that the shuffle of
	$g$ with itself is $0$.
	This shows that $x_1$ lies in 
	$\fncocycle G {n-2} c 0 \otimes k\{c^2\}$.
	Similarly, $x_2 \in \fncocycle G {n-3} c 1 \otimes k\{c^2\} \otimes  k\{g\}$.
	So, altogether, we obtain that $x$ lies in
\begin{align*}
		&\fncocycle G {n-2} c 0 \otimes k\{c^2\} \\
		&\oplus 
		\fncocycle G {n-3} c 0 \otimes k\{c^2\} \otimes k\{g\} \\
		&\oplus
		\fncell G {n-4} c 1 \otimes k\{c^2\} \otimes  k\{g\} \otimes
		k\{g\} \\
		&\oplus
		\fncell G {n-3} c 1 \otimes k\{g\} \otimes k\{g\} \otimes k\{g\}.
	\end{align*}
We picture the present form on $x$ on the right of
	\autoref{figure:h1-fuks-cells}.

	Now, using exactness of the two-sided K-complex, in the form of
	\autoref{proposition:upper-triangle-exact},
	we claim that
	there is some element
	\begin{align*}
		s = s_1 + s_2 + s_3 \in
	\fncohomology G {n-1} c 0 \otimes k\{c\}
	\oplus 
	\fncohomology G {n-2} c 0 \otimes k\{c\} \otimes k\{g\}
	\oplus 
	\fncohomology G {n-3} c 0 \otimes k\{c\} \otimes k\{g\} \otimes k\{g\},
	\end{align*}
	with $s_1 \in \fncohomology G {n-1} c 0 \otimes k\{c\},
	s_2 \in 
	\fncohomology G {n-2} c 0 \otimes k\{c\} \otimes k\{g\}$,
	and
	$s_3 \in \fncohomology G {n-3} c 0 \otimes k\{c\} \otimes k\{g\} \otimes
	k\{g\}$,
	so that the projection of $\delta(s)$ to $\fncell G {n-2} c 0 \otimes
	k\{c^2\}$ agrees with $x_1$.
		This uses that $\fncohomology G {n-3} c 0 = \fncocycle G {n-3} c 0$, as
	there are no coboundaries in the $0$th cohomology.
	We note that to work with the two sided $K$-complex, we really need to
		ignore terms in the subcomplex spanned by $g \otimes \cdots
		\otimes g$, but this is not a problem because our element starts
	in the cohomology of the connected Hurwitz space.
	We picture $s$ in the top row of
	\autoref{figure:two-sided-k-complex-example}.
\begin{figure}
	\includegraphics[scale=.6]{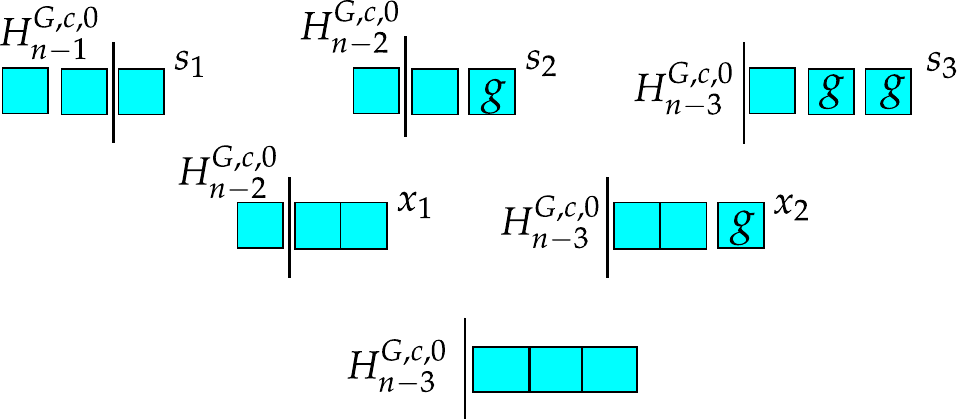}
	\caption{This is a picture of part of the two-sided $K$ complex used to
	compute the stable first cohomology, as relevant to
\autoref{example:h1-argument}.}
\label{figure:two-sided-k-complex-example}
\end{figure}

We now conclude the argument that $x =0$.
	With notation as above, $x - \delta(s)$ lies in $\fncell G {n-1} c 1 \otimes k\{g\}$.
	So, after replacing $x$ by its sum with a coboundary, we can arrange
	that $x \in \fncell G {n-1} c 1 \otimes k\{g\}$.
	Applying \autoref{lemma:one-more-g} 
	shows we may further assume $x \in \fncell G {n-2} c 1 \otimes k\{g\} \otimes k\{g\}$.
	Continuing to apply \autoref{lemma:one-more-g}, we may assume $x$ lies in the
	span of $g \otimes \cdots \otimes g$. Since we are assuming
	$x \in H^1(\cphurc G {n+1} c, k)$, we must have $x =0$.
\end{example}

\section{Computing the stable homology}
\label{section:stable-homology}

In this section, we complete the proof of our main result by computing the stable homology of Hurwitz spaces associated
dihedral groups.
As a preparatory lemma, we show that components with boundary monodromy in $G - c
- \id$ cannot have interesting stable homology.
This argument essentially appears in 
\cite{bianchiM:polynomial-stability}, and via personal communication, we learned
it was also known to Ellenberg, Venkatesh, and Westerland.

\begin{lemma}
	\label{lemma:cyclic-action}
	For $G$ and $c$ as in \autoref{notation:group}
	and $\zeta \in \mathbb Z/\ell \mathbb Z \subset G$ a generator,
	$\ker (\stableboundary i G  c \zeta \to \stable i {\id} 0
	{\id}) = 0$.
\end{lemma}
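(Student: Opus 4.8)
The plan is to read the whole statement through the finite covering $f\colon \boundarycphurc G n c \zeta \to \conf_n$ given by the branch-locus map, taking $n$ even and in the stable range. (Since $\zeta$ is a rotation, it maps to $0$ in $G/(\mathbb Z/\ell)\cong \mathbb Z/2$, so by \autoref{lemma:components} it occurs as a boundary monodromy only for $n$ even, matching the parity $\beta = 0$ of the target.) For such $n$ the fiber of $f$ is a single $B_n$-orbit $O_\zeta\subset c^n$, namely the generating tuples with product $\zeta$, so that $\stableboundary i G c \zeta \cong H_i(\conf_n;\mathbb Q[O_\zeta])$ and the map to $\stable i {\id} 0 {\id}\cong H_i(\conf_n;\mathbb Q)$ is induced by the augmentation $\mathbb Q[O_\zeta]\to\mathbb Q$. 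Splitting off the augmentation rationally, the displayed kernel becomes $H_i(\conf_n;\widetilde{\mathbb Q[O_\zeta]})$ for the sum-zero submodule $\widetilde{\mathbb Q[O_\zeta]}$, so the lemma is equivalent to the stable vanishing of this reduced homology.

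The main tool is a free cyclic symmetry. I would introduce $\kappa\colon c^n\to c^n$, $\kappa(g_1,\dots,g_n)=(\zeta g_1\zeta^{-1},\dots,\zeta g_n\zeta^{-1})$. A direct check shows $\kappa$ commutes with the braid action, preserves $O_\zeta$, and has order $\ell$; moreover it acts \emph{freely} on $O_\zeta$, since a fixed tuple would have every entry in $C_G(\zeta)=\langle\zeta\rangle$, impossible for entries in the reflection class $c$. The crucial observation is that on this component $\kappa$ agrees with the action of the full twist $\Delta^2\in Z(B_n)$, which conjugates every entry by the total monodromy $\zeta$. As $\Delta^2$ is central, it induces a self-map homotopic to the identity on the homotopy quotient, and therefore $\kappa$ acts as the identity on $H_\bullet(\boundarycphurc G n c \zeta;\mathbb Q)$.

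Next I would decompose $\mathbb Q[O_\zeta]$ into isotypic pieces for $\langle\kappa\rangle\cong\mathbb Z/\ell$: the invariants $W_0=\mathbb Q[O_\zeta/\langle\kappa\rangle]$ and the complementary part $W_{\neq}$, on which $\kappa$ acts through nontrivial $\ell$-th roots of unity. Since $\kappa$ acts on $H_i(\conf_n;W_\chi)$ simultaneously as the identity (previous step) and as the scalar $\chi(\kappa)\neq 1$ for each nontrivial character $\chi$, all these summands vanish. This kills the non-invariant part of $\widetilde{\mathbb Q[O_\zeta]}$ and reduces the lemma to the vanishing of $H_i(\conf_n;\widetilde{W_0})$, the reduced homology of the quotient cover $Y:=\boundarycphurc G n c \zeta/\langle\kappa\rangle\to\conf_n$.

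This invariant part is the hard step, and it is where the abelian structure must enter. Using $G=\mathbb Z/\ell\rtimes\mathbb Z/2$, a reflection-branched $G$-cover is a double cover $D'$ of the disc (pinned down by the configuration) together with a $\mathbb Z/\ell$-cover of $D'$ of boundary class $\zeta$, on which $\kappa$ acts by a fixed translation of the torsor of such covers. Decomposing $\widetilde{W_0}$ over $\overline{\mathbb Q}$ into the rank-one local systems $\ell_\chi$ attached to the nonzero characters $\chi$ of $H^1(D',\partial;\mathbb F_\ell)$, the claim reduces to the stable vanishing $H_i(\conf_n;\ell_\chi)=0$ for $\chi\neq 0$. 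This is precisely the abelian, cyclic-group vanishing, and it is the sort of statement produced by the exactness of the cyclic $\mathcal K$-complex $\mathcal K(k,A,k[h,h^{-1}])$ in \autoref{lemma:g-stable-k-complex} (or of \autoref{lemma:zero-and-iso-vanishes}, where one class acts invertibly and another by zero); alternatively one cites the abelian computation of \cite{bianchiM:polynomial-stability}. I expect the main obstacle to be exactly this packaging step — identifying $\widetilde{W_0}$ with coefficient systems to which those exactness results apply — which is also the only place where the parity of $n$ and the nontriviality of $\zeta$ as a rotation are genuinely used.
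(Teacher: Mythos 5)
Your reduction steps are correct as far as they go: the kernel is $H_i(\conf_n;\widetilde{\mathbb Q[O_\zeta]})$ by the rational splitting of the augmentation; simultaneous conjugation $\kappa$ by $\zeta$ commutes with the braid action, preserves $O_\zeta$, and acts freely there; the full twist $\Delta^2$ acts on any tuple by conjugation by its total product, hence as $\kappa^{\pm 1}$ on $O_\zeta$; and since $\Delta^2$ is central, its coefficient action induces the identity on $H_*(B_n;-)$, so $H_i(\conf_n;W_\chi)=0$ for every nontrivial character $\chi$ of $\langle\kappa\rangle$. However, note what this actually buys: it shows $H_i(\conf_n;\mathbb Q[O_\zeta])\cong H_i(\conf_n;W_0)$, i.e.\ the $\kappa$-invariant part carries \emph{all} of the homology. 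Your remaining step is therefore not a smaller subproblem: the vanishing of $H_i(\conf_n;\widetilde{W_0})$ is literally equivalent to the lemma you set out to prove.

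And that last step has a genuine gap. The summands of $\widetilde{W_0}$ attached to nontrivial characters are not rank-one local systems on $\conf_n$: the braid group permutes these characters (it acts on $H^1$ of the double cover through a large symplectic-type group), so the summands are induced from finite-index stabilizers, and by Shapiro's lemma the required statement is the stable vanishing of $H_i(\operatorname{Stab}_{B_n}(\chi);\chi)$ --- homology of finite-index subgroups of braid groups with nontrivial finite-order character coefficients. This is not an ``abelian, cyclic-group vanishing'' (for abelian $G$, Hurwitz spaces are just covers of $\conf_n$ and their reduced stable homology does not vanish), and neither \autoref{lemma:g-stable-k-complex} nor \autoref{lemma:zero-and-iso-vanishes} applies: those lemmas compute $\mathcal K$-complexes (Tor-type objects) over the Hurwitz chain ring $A$, which govern stabilization, not $H_*(\conf_n;-)$ with such coefficients. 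Moreover, within this paper's logical structure you cannot call on the Fox--Neuwirth/two-sided-$\mathcal K$ machinery to supply this vanishing without circularity: \autoref{lemma:cyclic-action} is an input to that machinery's application, since it feeds \autoref{lemma:trivial-action}, which supplies the hypotheses of \autoref{proposition:stabilize-by-one-element-trivial} in the proof of \autoref{theorem:stable-homology}. The paper avoids all of this by a route your outline does not touch: \cite[Corollary C']{bianchiM:polynomial-stability} identifies $\stableboundary i G c \zeta$ with the homology of one component of the group completion of $\coprod_{n\geq 0}\phurc G n c$, and the proof of \cite[Corollary 5.4]{randal-williams:homology-of-hurwitz-spaces} shows each such component maps isomorphically onto $\stable i {\id} 0 {\id}$; the key mechanism there --- translation by now-invertible $\pi_0$-classes making all components of the group completion homology equivalent --- is exactly what is invisible at the finite level where your local-system decomposition lives.
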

\begin{proof}
	This is a consequence of \cite[Corollary
	C']{bianchiM:polynomial-stability} and
the proof of \cite[Corollary 5.4]{randal-williams:homology-of-hurwitz-spaces}.
The first identifies $\stableboundary i G  c \zeta$ with the homology of one
component of the group completion of $\coprod_{n \geq 0} \phurc G n c$. The second shows that
the map from the homology of each component of the group completion of
$\coprod_{n \geq 0} \phurc G n c$
to $\stable i {\id} 0{\id}$ is an isomorphism.
	\end{proof}

\begin{lemma}
	\label{lemma:trivial-action}
	For $G$ and $c$ as in \autoref{notation:group},
	for any $g \in c$ and any $h \in c$ with $h \neq g$, $[h]$ acts by $0$
	on $\ker (\stableboundary i G  c g \to \stable i {\id} 0 {\id})$.
	Moreover, $[g]^j[h]$ also acts by $0$ on 
$\ker (\stableboundary i G  c g \to \stable i {\id} 0 {\id})$
for any $j \geq 0$.
\end{lemma}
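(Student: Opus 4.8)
The plan is to track how the operators $[h]$ and $[g]$ move the boundary monodromy and then feed the result into \autoref{lemma:cyclic-action}, which kills kernels for components whose boundary monodromy is a nontrivial rotation. The standing geometric input is that the projection $\cphurc G n c \to \conf_n$ (sending a cover to its branch locus) is compatible with the stabilization operators: adding a branch point corresponds to the stabilization on configuration space. Consequently each operator $[h]$ or $[g]$ carries $\ker(\stableboundary i G c \gamma \to \stable i {\id} 0 {\id})$ into the corresponding kernel for the new boundary monodromy.

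For the first statement, let $x \in \ker(\stableboundary i G c g \to \stable i {\id} 0 {\id})$. Since $[h]$ multiplies boundary monodromy on the right by $h$, the class $x[h]$ lies in $\stableboundary i G c {gh}$. As $g$ and $h$ are distinct involutions in the dihedral group, their product lies in $\mathbb Z/\ell\mathbb Z$ and is nontrivial (if $gh=\id$ then $h=g^{-1}=g$); since $\ell$ is prime, $gh=:\zeta$ is a generator. By the compatibility above, $x[h]$ lies in $\ker(\stableboundary i G c \zeta \to \stable i {\id} 0 {\id})$, which vanishes by \autoref{lemma:cyclic-action}. Hence $x[h]=0$.

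For the \emph{moreover} statement, fix $x$ in the kernel and $j\geq 0$. Compatibility of $[g]$ with the projection to configuration space shows $x[g]^j$ again maps to $0$ in $\stable i {\id} 0 {\id}$, while its boundary monodromy is $g\cdot g^j=g^{j+1}$, which is $g$ for $j$ even and $\id$ for $j$ odd (using $g^2=\id$). When $j$ is even, $x[g]^j$ therefore lies in $\ker(\stableboundary i G c g \to \stable i {\id} 0 {\id})$, and the first statement applied to $x[g]^j$ gives $x[g]^j[h]=0$. When $j$ is odd I instead work with $x[g]^{j-1}$, which has boundary monodromy $g^j=g$ and lies in the same kernel. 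Using the relation $[g][h]=[ghg][g]$ in $R$ (the case $g_1=g$, $g_2=ghg$ of $[g_1][g_1^{-1}g_2g_1]=[g_2][g_1]$, recalling $g^{-1}=g$), I rewrite
\[
x[g]^j[h]=x[g]^{j-1}[g][h]=x[g]^{j-1}[ghg][g].
\]
Since $ghg\neq g$ (else $h=g$) is again an involution, the first statement applied to $x[g]^{j-1}$ with $ghg$ in place of $h$ yields $x[g]^{j-1}[ghg]=0$, whence $x[g]^j[h]=0$.

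The only case requiring genuine care is odd $j$: there $x[g]^j$ has boundary monodromy $\id$ rather than a rotation, so \autoref{lemma:cyclic-action} does not apply to $x[g]^j$ directly. The braiding relation $[g][h]=[ghg][g]$ is exactly what lets me commute the trailing $[g]$ past $[h]$ and return to a rotation component covered by the first statement. The remaining points to verify carefully are the boundary-monodromy bookkeeping and the compatibility of the stabilization operators with the map to configuration space, the latter being implicit in \autoref{notation:stable-homology} and already used in the proof of \autoref{lemma:cyclic-action}.
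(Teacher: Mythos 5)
Your proof is correct, and its first half is exactly the paper's argument: $[h]$ shifts the boundary monodromy from $g$ to $\zeta = gh$, a generator of $\mathbb Z/\ell\mathbb Z$, so the image lands in $\ker(\stableboundary i G c \zeta \to \stable i {\id} 0 {\id})$, which vanishes by \autoref{lemma:cyclic-action}. For the \emph{moreover} statement the paper is slicker: it commutes $[h]$ all the way to the left using the identity $[g]^j[h] = [h][h^{-1}gh]^j$ in the ring of components, so that $x[g]^j[h] = (x[h])[h^{-1}gh]^j = 0$ follows at once from the first statement, with no case analysis. You instead split on the parity of $j$: for even $j$ you re-apply the first statement to $x[g]^j$, which requires the additional (true, and implicitly used throughout the paper) observation that the stabilization operators commute with the projection to configuration-space homology, so that $x[g]^j$ stays in the relevant kernel; for odd $j$ you perform a single commutation $[g][h] = [ghg][g]$ and apply the first statement with $ghg$ in place of $h$. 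Both routes rest on the same two ingredients — boundary-monodromy bookkeeping plus the braid relation in $R$ — but the paper's one-line identity avoids both the parity split and the need to re-verify kernel membership after stabilizing.
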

\begin{proof}
	For $h \neq g$, with $h \in c, g \in c$, we have that $\zeta := gh$ is a
	generator of $\mathbb Z/\ell \mathbb Z \subset G$.
	Therefore, $[h]$ maps 
$\ker (\stableboundary i G  c g \to \stable i {\id} 0 {\id})$
onto
$\ker (\stableboundary i G  c \zeta \to \stable i {\id} 0 {\id})$,
and the latter vanishes by \autoref{lemma:cyclic-action}.

To show $[g]^j [h]$ acts by $0$, simply observe $[g]^j [h] = [h] [h^{-1}gh]^j$.
So this acts by $0$ because $[h]$ acts by $0$.
\end{proof}

Finally, we are prepared to deduce our main result.
\begin{theorem}
	\label{theorem:stable-homology}
	With notation as in \autoref{definition:boundary-monodromy}
	$\stableboundary i G
	c g \to \stable i {\id} \beta {\id}$ is an isomorphism.
\end{theorem}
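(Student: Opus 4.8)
The plan is to prove the isomorphism by strong induction on the homological degree $i$, after first disposing of surjectivity uniformly and reducing everything to the vanishing of the relevant kernels. The branch-locus map $f\colon \boundarycphurc G n c g \to \conf_n$ is a finite covering onto $\conf_n$, so with $\mathbb Q$-coefficients there is a transfer $\tau\colon H_*(\conf_n,\mathbb Q)\to H_*(\boundarycphurc G n c g,\mathbb Q)$ with $f_*\tau = (\deg f)\cdot\id$; hence $f_*$ is surjective. Since $\stable i {\id} \beta {\id}$ is $1$-dimensional for $i\in\{0,1\}$ and vanishes for $i\ge 2$, surjectivity holds in every degree, and for $i\ge 2$ the asserted isomorphism is equivalent to the vanishing of the source. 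Thus in all degrees it suffices to prove $\ker(\stableboundary i G c g \to \stable i {\id} \beta {\id}) = 0$ for every $g\in G$.

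For the induction I would assume the map is an isomorphism in every degree $j<i$ and for every boundary monodromy; this is precisely the standing hypothesis required to apply \autoref{proposition:stabilize-by-one-element-trivial}. Fix $i$ and split according to the type of $g$. If $g$ is a nontrivial element of $\mathbb Z/\ell \subset G$, then $g$ generates $\mathbb Z/\ell$ because $\ell$ is prime, and \autoref{lemma:cyclic-action} gives $\ker = 0$ outright, with no appeal to the inductive hypothesis. If $g\in c$, then for any class $z$ in the kernel \autoref{lemma:trivial-action} shows $z[g]^w[h]=0$ for all $w\ge 0$ and all $h\neq g$; combined with the inductive hypothesis this verifies the hypotheses of \autoref{proposition:stabilize-by-one-element-trivial}, which forces $z=0$.

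The remaining case $g=\id$ is the delicate one, since $\id\notin c$ and so \autoref{proposition:stabilize-by-one-element-trivial} does not apply directly. Here I would take $z\in\ker(\stableboundary i G c {\id}\to\stable i {\id} 0 {\id})$ and, for each $g_0\in c$, consider $z[g_0]\in\stableboundary i G c {g_0}$. Because $f$ intertwines the operator $[g_0]$ with the single stabilization operator on configuration space, $z[g_0]$ again maps to $0$ in $\stable i {\id} 0 {\id}$, so $z[g_0]$ lies in $\ker(\stableboundary i G c {g_0}\to\stable i {\id} 0 {\id})$. By the case $g_0\in c$ just treated at the same degree $i$, this kernel is zero, so $z[g_0]=0$, and therefore $z[g_0]^2=0$, for every $g_0\in c$. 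Summing over $c$ yields $z\cdot\big(\sum_{g_0\in c}[g_0]^2\big)=0$; since $\sum_{g_0\in c}[g_0]^2$ is the stabilization operator, which acts invertibly on stable homology by \autoref{notation:stable-homology}, we conclude $z=0$.

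The only subtlety in the assembly is the ordering within a single induction step: the identity case consumes the $c$-case at the very same degree $i$, so one must dispatch the $c$-case (via the proposition) before the identity case, while both ultimately rest only on the strictly-lower-degree inductive hypothesis. I expect all the genuine difficulty to lie in \autoref{proposition:stabilize-by-one-element-trivial} itself; granting it, the theorem is a bookkeeping of the three monodromy types, with the stabilization-operator argument being the only non-formal step needed to pass from the $c$-components to the identity component.
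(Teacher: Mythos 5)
Your proposal is correct and follows essentially the same route as the paper's proof: transfer for split surjectivity, \autoref{lemma:trivial-action} combined with \autoref{proposition:stabilize-by-one-element-trivial} to kill the kernel when $g \in c$, and multiplication by $[h]$ together with the fact that $\sum_{h \in c}[h]^2$ acts as the identity to handle boundary monodromy outside $c$. The only differences are organizational: you make the induction on $i$ explicit (the paper leaves it implicit in the hypotheses of \autoref{proposition:stabilize-by-one-element-trivial}), and you dispatch nontrivial elements of $\mathbb Z/\ell\mathbb Z$ via \autoref{lemma:cyclic-action} directly, whereas the paper folds them into the same shift-to-$c$ argument it uses for $g=\id$.
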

\begin{proof}
	One can see using transfer maps that the map $\stableboundary i G
	c g \to \stable i {\id} \beta {\id}$ is a split surjection, so it suffices to show that the kernel of this map is $0$.
	First, we assume $g \in c$.
	Consider the stabilization map $\sum_{h \in c} [h]^2$ acting on 
	$\ker(\stableboundary i G c g \to \stable i {\id} 1 {\id})$
	for $g \in c$. 
	Each $[h]^2$ for $h \neq g$ acts by $0$ on this kernel by
	\autoref{lemma:trivial-action}, while
	$[g]^2$ acts by $0$ using
	\autoref{proposition:stabilize-by-one-element-trivial}.
	Note here that we may apply 
	\autoref{proposition:stabilize-by-one-element-trivial} because
	the assumptions of \autoref{notation:z} are satisfied by 
	\autoref{lemma:trivial-action}.
	Overall, this implies 
	$\sum_{h \in c} [h]^2$ acts as $0$ 
	on 
	$\ker(\stableboundary i G c g \to \stable i {\id} 1 {\id})$.
		But
	$\sum_{h \in c} [h]^2$ acts as the identity on this
	space by \autoref{notation:stable-homology}, implying
	$\ker(\stableboundary i G c g \to\stable i {\id} 1 {\id}) =0$.

	The case that $g \notin c$ follows from the case $g \in c$ because each
$[h]$ for $h \in c$ acts as $0$ on 
$\ker(\stableboundary i G  c g \to\stable i {\id} 0 {\id})$, since it sends
$\ker(\stableboundary i G  c g \to\stable i {\id} 0 {\id})$
to $\ker(\stableboundary i G  c {gh} \to\stable i {\id} 1 {\id})$, which we have
just shown to be $0$, and $\sum_{h \in c} [h]^2$ is the identity.
\end{proof}

\bibliographystyle{alpha}
\bibliography{./bibliography}

\end{document}